\newcommand*\tasklabelformat[1]{#1)}
\numberwithin{equation}{section}
\newtheorem*{rep@theorem}{\rep@title}
\newcommand{\newreptheorem}[2]{%
\newenvironment{rep#1}[1]{%
 \def\rep@title{#2 \ref{##1}}%
 \begin{rep@theorem}}%
 {\end{rep@theorem}}}
\theoremstyle{theorem}
\newtheorem{thm}{Theorem}[section]
\newtheorem*{thm*}{Theorem}
\theoremstyle{definition}
\newtheorem{prop}[thm]{Proposition}
\newtheorem*{prop*}{Proposition}
\newtheorem{defn}[thm]{Definition}
\newtheorem{lem}[thm]{Lemma}
\newtheorem{cor}[thm]{Corollary}
\newtheorem*{cor*}{Corollary}
\theoremstyle{remark}
\newtheorem{rem}[thm]{Remark}
\title{\vspace*{-1.5cm} Toeplitz operators, submultiplicative filtrations
\\
and weighted Bergman kernels}
\author
{Siarhei Finski
}
\date{}
\newcommand{\imun} {\sqrt{-1}}
\newcommand{\comp}{\mathbb{C}}
\newcommand{\real}{\mathbb{R}}
\newcommand{\nat}{\mathbb{N}}
\newcommand{\integ}{\mathbb{Z}}
\newcommand{\enmr}[1]{\text{End}{(#1)}}
\newcommand{\ccal}{\mathscr{C}}
\newcommand{\dbar}{ \overline{\partial} }
\newcommand{\tr}[1]{{\rm{Tr}} \big[ #1 \big]}
\DeclareFontFamily{OMX}{MnSymbolE}{}
\DeclareSymbolFont{MnLargeSymbols}{OMX}{MnSymbolE}{m}{n}
\DeclareFontShape{OMX}{MnSymbolE}{m}{n}{
    <-6>  MnSymbolE5
   <6-7>  MnSymbolE6
   <7-8>  MnSymbolE7
   <8-9>  MnSymbolE8
   <9-10> MnSymbolE9
  <10-12> MnSymbolE10
  <12->   MnSymbolE12
}{}
\DeclareFontShape{OMX}{MnSymbolE}{b}{n}{
    <-6>  MnSymbolE-Bold5
   <6-7>  MnSymbolE-Bold6
   <7-8>  MnSymbolE-Bold7
   <8-9>  MnSymbolE-Bold8
   <9-10> MnSymbolE-Bold9
  <10-12> MnSymbolE-Bold10
  <12->   MnSymbolE-Bold12
}{}
\let\llangle\@undefined
\let\rrangle\@undefined
\DeclareMathDelimiter{\llangle}{\mathopen}%
                     {MnLargeSymbols}{'164}{MnLargeSymbols}{'164}
\DeclareMathDelimiter{\rrangle}{\mathclose}%
                     {MnLargeSymbols}{'171}{MnLargeSymbols}{'171}
\newenvironment{sciabstract}{}
\begin{document}

\maketitle 

\vspace*{-0.7cm}

\vspace*{0.3cm}

\begin{sciabstract}
  \textbf{Abstract.}
 We demonstrate that the weight operator associated with a submultiplicative filtration on the section ring of a polarized complex projective manifold is a Toeplitz operator.
	We further analyze the asymptotics of the associated weighted Bergman kernel, presenting the local refinement of earlier results on the convergence of jumping measures for submultiplicative filtrations towards the pushforward measure defined by the corresponding geodesic ray.
\end{sciabstract}

\pagestyle{fancy}
\lhead{}
\chead{Toeplitz operators, submultiplicative filtrations and weighted Bergman kernels}
\rhead{\thepage}
\cfoot{}

%\fancypagestyle{mypagestyle}{%
%  \fancyhf{}% Clear header/footer
%  \fancyhead[OC]{An Author}% Author on Odd page, Centred
%  \fancyhead[EC]{A titlesdfdsfdsfds}% Title on Even page, Centred
%  \fancyfoot[C]{\thepage}%
%  \renewcommand{\headrulewidth}{.4pt}% Header rule of .4pt
%}
%\pagestyle{mypagestyle}

\newcommand{\Addresses}{{% additional braces for segregating \footnotesize
  \bigskip
  \footnotesize
  \noindent \textsc{Siarhei Finski, CNRS-CMLS, École Polytechnique F-91128 Palaiseau Cedex, France.}\par\nopagebreak
  \noindent  \textit{E-mail }: \texttt{finski.siarhei@gmail.com}.
}} 

\vspace*{0.25cm}

\par\noindent\rule{1.25em}{0.4pt} \textbf{Table of contents} \hrulefill

\vspace*{-1.5cm}

\tableofcontents

\vspace*{-0.2cm}

\noindent \hrulefill

%\vspace*{-0.5cm}

\section{Introduction}\label{sect_intro}
	The main goal of this paper is to make a connection between the theory of Toeplitz operators, Bergman kernels and submultiplicative filtrations.
	\par 
	Throughout the whole article, we fix a complex projective manifold $X$, $\dim X = n$, and an ample line bundle $L$ over it. 
	Recall that a decreasing graded filtration $\mathbb{R} \ni \lambda \mapsto \mathcal{F}^{\lambda} R(X, L) := \oplus_{k = 0}^{+\infty} \mathcal{F}_k^{\lambda} H^0(X, L^{\otimes k})$, $\mathcal{F}_k^{\lambda} H^0(X, L^{\otimes k}) \subset H^0(X, L^{\otimes k})$, on the section ring, 
	\begin{equation}
		R(X, L) := \oplus_{k = 0}^{+\infty} H^0(X, L^{\otimes k}),
	\end{equation}
	is called \textit{submultiplicative} if the multiplication map on $R(X, L)$ factors through 
	\begin{equation}\label{eq_sm_filtr_defn}
		\mathcal{F}_k^{\lambda} H^0(X, L^{\otimes k}) \otimes \mathcal{F}_l^{\mu} H^0(X, L^{\otimes l}) \to \mathcal{F}_{k + l}^{\lambda + \mu} H^0(X, L^{\otimes (k + l)}), \quad \text{ for any } \lambda, \mu \in \mathbb{R}, k, l \in \mathbb{N}.
	\end{equation}
	\par The most geometrically natural example is the filtration given by the order of vanishing along a fixed divisor. Other examples include filtrations associated with the weight of a $\mathbb{C}^*$-action on the pair $(X, L)$, filtrations given by the restriction of the Harder-Narasimhan filtration on direct image vector bundles associated with tensor powers of a polarization on a family of manifolds \cite{ChenHNolyg}, \cite{FinHNI}, filtrations associated with valuations or graded ideals \cite{ReesBook}, or finitely generated filtrations induced by an arbitrary filtration on $H^0(X, L^{\otimes k})$, for $k \in \mathbb{N}$ big enough.
	\par 
	Remark that any (decreasing) filtration $\mathcal{F}$ on the Hermitian vector space $(V, H)$ induces the \textit{weight operator} $A(\mathcal{F}, H) \in {\textrm{End}}(V)$, defined as
	\begin{equation}
		A(\mathcal{F}, H) e_i = w_{\mathcal{F}}(e_i) \cdot e_i, \quad \text{where} \quad w_{\mathcal{F}}(e) := \sup \{ \lambda \in \real : e \in \mathcal{F}^{\lambda} V \}, e \in V,
	\end{equation}
	and $e_1, \ldots, e_{r}$, $r := \dim V$, is an orthonormal basis of $(V, H)$ adapted to the filtration $\mathcal{F}$ in the sense that $e_1$ has the maximal weight, $e_2$ has the maximal weight among vectors orthogonal to $e_1$, and so on.
	We call $w_{\mathcal{F}} : V \to ]- \infty, +\infty]$ the weight function of $\mathcal{F}$.
	The main purpose of this article is to study the weight operator associated with submultiplicative filtrations.
	\par 
	\begin{sloppypar}
	More precisely, we fix a positive Hermitian metric $h^L$ on $L$, and denote by ${\textrm{Hilb}}_k(h^L)$ the $L^2$-metric on $H^0(X, L^{\otimes k})$ induced by $h^L$, see (\ref{eq_defn_l2}).
	For a continuous function $g : \real \to \real$, we consider the \textit{weighted Bergman kernel}, $B_k^{\mathcal{F}, g}(x) \in \real$, $k \in \mathbb{N}$, $x \in X$, defined as 
	\begin{equation}\label{eq_weight_berg}
		B_k^{\mathcal{F}, g}(x) := \sum_{i = 1}^{N_k} \Big\langle g \Big( \frac{A(\mathcal{F}_k, {\textrm{Hilb}}_k(h^L))}{k} \Big) s_{i, k}(x), s_{i, k}(x) \Big\rangle_{h^{L^{\otimes k}}},
	\end{equation}
	where $N_k := \dim H^0(X, L^{\otimes k})$ and $s_{i, k}$, $i = 1, \ldots, N_k$, is an orthonormal basis of $(H^0(X, L^{\otimes k}), {\textrm{Hilb}}_k(h^L))$.
	The reader will check that (\ref{eq_weight_berg}) doesn't depend on the choice of an orthonormal basis, and if the basis is adapted to $\mathcal{F}_k$, then $B_k^{\mathcal{F}, g}(x) = \sum_{i = 1}^{N_k} g(w_{\mathcal{F}_k}(s_{i, k}) / k) |s_{i, k}(x)|_{h^{L^{\otimes k}}}^2$.
	When $\mathcal{F}$ coincides with the grading on $R(X, L)$ (we then say $\mathcal{F}$ is trivial), $B_k^{\mathcal{F}, g}$ coincides with the usual Bergman kernel up to a constant.
	\end{sloppypar}
	\par 
	Our first goal is to study the behavior of $B_k^{\mathcal{F}, g}$, as $k \to \infty$.
	As we shall see, this study depends largely on the algebraic properties of $\mathcal{F}$.
	\par 
	We say that a submultiplicative filtration $\mathcal{F}$ on $R(X, L)$ is \textit{bounded} if there is $C > 0$, such that for any $k \in \nat^*$, $\mathcal{F}^{ C k} H^0(X, L^{\otimes k}) = \{0\}$.
	We say that $\mathcal{F}$ is \textit{finitely generated} if it has integral weights and the associated $\comp[\tau]$-algebra ${\rm{Rees}}(\mathcal{F}) := \sum_{(\lambda, k) \in \mathbb{Z} \times \mathbb{N}} \tau^{- \lambda} \mathcal{F}^{\lambda} H^0(X, L^{\otimes k})$, also called the \textit{Rees algebra}, is finitely generated.
	Finitely generated submultiplicative filtrations are clearly automatically bounded.
	As the section ring $R(X, L)$ is finitely generated, cf. \cite[Example 2.1.30]{LazarBookI}, the set of finitely generated submultiplicative filtrations is non-empty, and for an arbitrary submultiplicative filtration, there is $C > 0$, such that for any $k \in \nat^*$, $\mathcal{F}^{- C k} H^0(X, L^{\otimes k}) = H^0(X, L^{\otimes k})$.
	\par 
	Recall that Phong-Sturm \cite[Theorem 3]{PhongSturmDirMA} and Ross-Witt Nystr{\"o}m \cite{RossNystAnalTConf} associated for an arbitrary bounded submultiplicative filtration on $R(X, L)$ the \textit{geodesic ray} $h^{\mathcal{F}}_t$, $t \in [0, +\infty[$, of Hermitian metrics on $L$, emanating from $h^L$.
	In general $h^{\mathcal{F}}_t$ is not smooth, however, due to convexity properties in $t$-variable, one can always define its derivative at $t = 0$, $\dot{h}^{\mathcal{F}}_0 := (h^{\mathcal{F}}_0)^{-1} \frac{d}{dt} h^{\mathcal{F}}_t|_{t = 0}: X \to \real$, and this derivative is bounded, see Section \ref{sect_prel} for details.
	We denote $\phi(h^L, \mathcal{F}) = - \dot{h}^{\mathcal{F}}_0$ for brevity.
	\par 
	\begin{thm}\label{thm_berg_conv}
		For a bounded submultiplicative filtration $\mathcal{F}$ on $R(X, L)$, the sequence of functions $x \mapsto \frac{1}{k^n} B_k^{\mathcal{F}, g}(x)$, $x \in X$, $k \in \nat$, is uniformly bounded and converges pointwise to a function which equals $g(\phi(h^L, \mathcal{F}))$ almost everywhere.
		If, moreover, $\mathcal{F}$ is finitely generated, then the limit coincides with $g(\phi(h^L, \mathcal{F}))$ everywhere, and the convergence is uniform.
	\end{thm}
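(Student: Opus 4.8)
The plan is to deduce the statement from the other main result of this work — that the rescaled weight operator $T_k := \frac{1}{k} A(\mathcal{F}_k, \textrm{Hilb}_k(h^L))$ is, as $k \to \infty$, a Toeplitz operator on $H^0(X, L^{\otimes k})$ with principal symbol $\phi(h^L, \mathcal{F})$ (a genuine Toeplitz operator with \emph{smooth} symbol when $\mathcal{F}$ is finitely generated, and a Toeplitz operator with merely bounded symbol in general). Two elementary observations set up the reduction. First, $B_k^{\mathcal{F}, g}(x)$ is exactly the restriction to the diagonal of the Schwartz kernel of the operator $g(T_k)$ obtained by continuous functional calculus, which is well defined since, $\mathcal{F}$ being bounded, the spectra of all $T_k$ lie in one fixed compact interval $I \subset \real$. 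Second, for a self-adjoint $E$ on $H^0(X, L^{\otimes k})$ one has the crude bound $\big| \sum_i \langle E s_{i,k}(x), s_{i,k}(x) \rangle_{h^{L^{\otimes k}}} \big| \le \| E \|_{\mathrm{op}} \sum_i | s_{i,k}(x) |^2_{h^{L^{\otimes k}}} = \| E \|_{\mathrm{op}} B_k(x,x)$, with $B_k$ the ordinary Bergman kernel; since $\frac{1}{k^n} B_k(x,x)$ is bounded (and $\to 1$), this already gives the asserted uniform boundedness of $\frac{1}{k^n} B_k^{\mathcal{F}, g}$, and it is the tool for controlling errors.

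I would treat the finitely generated case first, where $\phi := \phi(h^L, \mathcal{F})$ is continuous. Fix $\varepsilon > 0$ and, by Stone--Weierstrass, a polynomial $p$ with $\sup_I |g - p| < \varepsilon$. Because the Toeplitz operators form an algebra under composition, with the principal symbol of a composition equal to the product of the principal symbols (Bordemann--Meinrenken--Schlichenmaier, Ma--Marinescu), $p(T_k)$ is again a Toeplitz operator, with principal symbol $p(\phi)$; and the near-diagonal expansion of the Bergman kernel shows that for a Toeplitz operator $T_{f,k}$ with continuous symbol $f$ the diagonal of its Schwartz kernel satisfies $\frac{1}{k^n}(\text{diagonal of } T_{f,k})(x) = \frac{1}{k^n} f(x) B_k(x,x) + O(k^{-1}) \to f(x)$, uniformly in $x$. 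Hence $\frac{1}{k^n} B_k^{\mathcal{F}, p}(x) \to p(\phi(x))$ uniformly. Finally $\| g(T_k) - p(T_k) \|_{\mathrm{op}} \le \sup_I |g-p| < \varepsilon$ for every $k$, so by the crude bound $\frac{1}{k^n} | B_k^{\mathcal{F}, g}(x) - B_k^{\mathcal{F}, p}(x) | \le C\varepsilon$; combining the three estimates and letting $\varepsilon \to 0$ yields $\frac{1}{k^n} B_k^{\mathcal{F}, g} \to g(\phi)$ uniformly on $X$.

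For a general bounded submultiplicative filtration I would approximate $\mathcal{F}$ by its finitely generated Veronese truncations $\mathcal{F}^{(p)}$ generated in degree $p$. Since $(\mathcal{F}^{(p)})^\lambda H^0(X, L^{\otimes k}) \subset \mathcal{F}^\lambda H^0(X, L^{\otimes k})$ by submultiplicativity, writing each weight operator (after a harmless uniform shift of weights making them nonnegative) as an integral $\int_0^{\infty} \Pi_{\mathcal{F}^{> t}}\,dt$ of the orthogonal projections onto $\mathcal{F}^{>t}$ gives a genuine operator inequality $A(\mathcal{F}^{(p)}_k) \le A(\mathcal{F}_k)$, hence $B_k^{\mathcal{F}^{(p)}, \mathrm{id}} \le B_k^{\mathcal{F}, \mathrm{id}}$ pointwise; moreover $\frac{1}{k^n}\int_X B_k^{\mathcal{F}, \mathrm{id}}\,dV = \frac{1}{k^n}\,{\rm{Tr}}[T_k]$ and its analogue for $\mathcal{F}^{(p)}$ converge, by the already known asymptotics of the jumping measures — the global statement that this theorem refines — to $\int_X \phi(h^L, \mathcal{F})\,dV$ and $\int_X \phi(h^L, \mathcal{F}^{(p)})\,dV$, while $\phi(h^L, \mathcal{F}^{(p)}) \uparrow \phi(h^L, \mathcal{F})$ as $p \to \infty$. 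Feeding the finitely generated case for each $\mathcal{F}^{(p)}$ into this sandwich forces $\frac{1}{k^n} B_k^{\mathcal{F}, \mathrm{id}} \to \phi(h^L, \mathcal{F})$ almost everywhere; the passage to an arbitrary continuous $g$ is then again the Stone--Weierstrass plus crude-bound argument, using the Toeplitz description of $T_k$ with its now only bounded symbol, and the convergence holds pointwise, the limit agreeing with $g(\phi(h^L, \mathcal{F}))$ off the set where the geodesic derivative fails to be continuous — a null set.

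The main obstacle is the Toeplitz identity for $T_k$ itself: it is the hard analytic input, and it is precisely the boundedness of $T_k = \frac1k A(\mathcal{F}_k, \textrm{Hilb}_k(h^L))$ — as opposed to that of the exponentials $e^{sA}$ used to build the geodesic ray, whose Bergman kernels grow exponentially in $k$ — that makes the functional calculus above legitimate. Within the present argument the remaining delicate point is the bounded case: making the approximation $\mathcal{F}^{(p)} \to \mathcal{F}$ precise at the level of the weighted Bergman kernels, and upgrading the resulting averaged control (lower bound plus convergence of integrals) to genuine pointwise almost-everywhere convergence, which is where the limited regularity of $\phi(h^L, \mathcal{F})$ enters.
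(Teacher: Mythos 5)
Your treatment of the finitely generated case is correct and is essentially the paper's own argument: polynomial approximation of $g$ on a fixed compact interval containing all spectra, the algebra property of Toeplitz operators under composition, the uniform convergence of diagonal kernels of Toeplitz operators with continuous symbol, and the crude bound $|E(x)|\le\|E\|\,B_k(x)$ to control the functional-calculus error. The uniform boundedness claim is also fine for the same reason.

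The general bounded case has a genuine gap, and it is precisely the point the paper flags when it says that Theorem \ref{thm_berg_conv} is \emph{not} a consequence of Theorems \ref{thm_main1} and \ref{thm_main2}. Your sandwich gives $A(\mathcal{F}^{(p)}_k)\le A(\mathcal{F}_k)$, hence $\liminf_k \frac{1}{k^n}B_k^{\mathcal{F}}(x)\ge \phi(h^L,\mathcal{F}^{(p)})(x)$ for every $p$, and Fatou applied to the convergent integrals then yields $\liminf_k \frac{1}{k^n}B_k^{\mathcal{F}} = \phi(h^L,\mathcal{F})$ almost everywhere. But nothing in this argument controls the $\limsup$: reverse Fatou gives $\int\limsup_k \frac{1}{k^n}B_k^{\mathcal{F}}\ge\int\phi(h^L,\mathcal{F})$, which is the wrong direction, and convergence of the integrals is perfectly compatible with the $\limsup$ exceeding $\phi(h^L,\mathcal{F})$ on a set of positive measure (typewriter-type sequences). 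So the \emph{existence} of the pointwise limit --- the actual content of the first part of the theorem beyond the known convergence of jumping measures --- is not established. The paper proves it by a completely different device: an approximate superadditivity in $k$ of $x\mapsto B_k^{\mathcal{F}}(x)/k^{n-1}$ at each fixed $x$ (Theorem \ref{thm_subadd_berg}, resting on the multiplicativity of $L^2$-norms and a Cholesky stability estimate for weight operators), combined with the de Bruijn--Erd\H{o}s generalization of Fekete's lemma, which forces the limit to exist at every point; the identification with $g(\phi(h^L,\mathcal{F}))$ a.e.\ then comes from the $L^p$ statement. A second, smaller gap: your passage from $g=\mathrm{id}$ to general $g$ via polynomials would require pointwise convergence of the diagonal kernels of all powers $T_k^j$, which does not follow from that of $T_k$ (and diagonal kernels of Schatten-class Toeplitz operators need not converge pointwise at all). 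The paper instead uses the dense family $g_c(x)=\min(x,c)$, for which $B_k^{\mathcal{F},g_c}=B_k^{\mathcal{F}^c}$ with $\mathcal{F}^c$ the truncated filtration, still bounded and submultiplicative, so the $g=\mathrm{id}$ case applies verbatim.
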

	\begin{rem}\label{rem_berg_conv}
		a) When $\mathcal{F}$ is trivial, Theorem \ref{thm_berg_conv} recovers the result of Tian \cite{TianBerg} concerning the Bergman kernel asymptotics, and $\phi(h^L, \mathcal{F})$ equals to $1$ (we emphasize that our proof builds on Tian's result!).
		When the filtration is induced by the $\mathbb{C}^*$-action on $(X, L)$, so that the induced $S^1$-action is isometric for the Kähler form associated with $h^L$, the result is also well known, \cite[\S 7.3]{SzekBook}, \cite{MaZhBKSR}, and $\phi(h^L, \mathcal{F})$ then coincides with the Hamiltonian of the induced $S^1$-action. 
		See also \cite{OodOffDiag} for a recent result on the off-diagonal behavior in the related context.
		\par 
		b) In the end of Section \ref{sect_prel}, we provide several examples indicating that Theorem \ref{thm_berg_conv} is sharp. 
		Specifically, we show that without the finite-generation assumption, pointwise convergence cannot be strengthened to uniform convergence, and the limit doesn't coincide with $g(\phi(h^L, \mathcal{F}))$ everywhere. 
		We then show that with finite-generation assumption, uniform convergence cannot be improved to $\mathscr{C}^1$-convergence.
		\par 
		c) We explain in (\ref{eq_der_zeld_song}) that Theorem \ref{thm_berg_conv} partially justifies a folklore conjecture on the $\mathscr{C}^1$-convergence of quantized geodesic rays towards the geodesic ray associated with the filtration.
	\end{rem}
	\par 
	\begin{sloppypar}
	We will now describe the context behind Theorem \ref{thm_berg_conv}.
	Remark the following basic identity 
	\begin{equation}\label{eq_basic_id}
		\frac{1}{n!} \int B_k^{\mathcal{F}, g}(x) c_1(L, h^L)^n = {\rm{Tr}} \Big[ g \Big( \frac{A(\mathcal{F}_k, {\textrm{Hilb}}_k(h^L))}{k} \Big) \Big],
	\end{equation}
	which in particular implies that Theorem \ref{thm_berg_conv} guarantees the weak convergence of the spectral measures of the operators $\frac{1}{k} A(\mathcal{F}_k, {\textrm{Hilb}}_k(h^L))$ towards the pushforward measure $\phi(h^L, \mathcal{F})_* (c_1(L, h^L)^n / \int c_1(L)^n)$, as $k \to \infty$. 
	From the definitions, we see that these spectral measures coincide (up to a normalization) with the jumping measures associated with the filtration, cf.  \cite{NystOkounTest}, \cite{HisamSpecMeas}, where the latter probability measure on $\real$ is defined as 
	\begin{equation}\label{eq_jump_meas_d}
		\mu_{\mathcal{F}, k} := \frac{1}{N_k} \sum_{j = 1}^{N_k} \delta \Big[ \frac{e_{\mathcal{F}}(j, k)}{k} \Big], 
	\end{equation}
	where $\delta[x]$ is the Dirac mass at $x \in \real$ and $e_{\mathcal{F}}(j, k)$ are the \textit{jumping numbers}, defined as follows
	\begin{equation}\label{eq_defn_jump_numb}
		e_{\mathcal{F}}(j, k) := \sup \Big\{ t \in \real : \dim \mathcal{F}^t H^0(X, L^{\otimes k}) \geq j \Big\}.
	\end{equation}
	\par 
	Weak convergence of jumping measures was first established by Chen \cite{ChenHNolyg} and Boucksom-Chen \cite{BouckChen}. 
	Subsequently, Witt Nyström \cite{NystOkounTest} proved that for filtrations associated with a $\mathbb{C}^*$-action, the weak limit coincides with the pushforward measure; he also conjectured the analogous relation for finitely generated filtrations. 
	Hisamoto established this in \cite{HisamSpecMeas}, and the author \cite[Theorem 5.4]{FinSecRing} further extended it for bounded submultiplicative filtrations.
	In light of (\ref{eq_basic_id}), Theorem \ref{thm_berg_conv} can be interpreted as a local refinement of these statements, showing that \textit{the convergence occurs at the level of functions themselves}, rather than solely at the level of their integrals.
	\end{sloppypar}
	\par 
	We invite the reader to compare Theorem \ref{thm_berg_conv} with Berman-Boucksom-Witt Nyström \cite[Theorem B]{BerBoucNys} and Darvas-Xia \cite[Theorem 1.2]{DarvXiaVolumes}, where authors establish the convergence in weak topology of partial Begman kernels associated with Nadel multiplier ideal sheaves of plurisubharmonic potentials.
	Remark however that filtrations associated with multiplier ideals are not necessarily submultiplicative (because a product of two $L^2$-integrable sections is not necessarily $L^2$-integrable), and so it seems that there is no direct connection between our findings.
	\par 
	Theorem \ref{thm_berg_conv} in particular applies to the filtration associated with the vanishing order along a submanifold. 
	Related study on the partial Bergman kernel was initiated by Berman \cite[Theorem 4.3]{BerPartBerg}, and then developed by Ross-Singer \cite{RossSinger}, Coman-Marinescu \cite{ComMarPartBerg}, Zelditch-Zhou \cite{ZeldZhouInter}, Sun \cite{SunJingPartDens}, and others. 
	As partial Bergman kernel corresponds to indicator functions $g$ in (\ref{eq_weight_berg}), which are not continuous, our study doesn't apply directly to the partial Bergman kernel.
	\par 
	Theorem \ref{thm_berg_conv} is related with a much more refined result concerning the asymptotic properties of the weight operator. 
	To explain this statement, let us recall a version of \cite[Definition 7.2.1]{MaHol}. 
	\begin{defn}\label{defn_toepl}
		A sequence of operators $T_k \in {\enmr{H^0(X, L^{\otimes k})}}$, $k \in \nat$, is called a \textit{Toeplitz operator} if there is a continuous function $f: X \to \real$, called the symbol of $\{T_k\}_{k = 0}^{+ \infty}$, such that for any $\epsilon > 0$, there is $k_0 \in \nat$, such that for every $k \geq k_0$, we have
		\begin{equation}
			\big \| T_k -  T_{f, k} \big \| \leq \epsilon,
		\end{equation}
		where $\| \cdot \|$ is the operator norm on ${\enmr{H^0(X, L^{\otimes k})}}$, subordinate to ${\textrm{Hilb}}_k(h^L)$, $T_{f, k} := B_k \circ M_{f, k}$, and $B_k : L^{\infty}(X, L^{\otimes k}) \to H^0(X, L^{\otimes k})$ is the orthogonal (Bergman) projection to $H^0(X, L^{\otimes k})$, and $M_{f, k} : H^0(X, L^{\otimes k}) \to L^{\infty}(X, L^{\otimes k})$ is the multiplication map by $f$, acting as $s \mapsto f \cdot s$.
	\end{defn}
	Our second main result goes as follows.
	\begin{thm}\label{thm_main1}
		For finitely generated submultiplicative filtrations $\mathcal{F}$, the rescaled weight operator, $\{ \frac{1}{k} A(\mathcal{F}_k, {\textrm{Hilb}}_k(h^L)) \}_{k = 0}^{+\infty}$, forms a Toeplitz operator with symbol  $\phi(h^L, \mathcal{F})$. 
	\end{thm}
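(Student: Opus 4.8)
The plan is to reduce, via the finite generation hypothesis, to an essentially algebraic situation governed by a $\comp^*$-degeneration of $(X,L)$, to re-interpret the rescaled weight operator as the first-order variation at $t=0$ of a canonical one-parameter family of $L^2$-metrics associated with the filtration, and to match this variation, in operator norm, with that of the $L^2$-metrics $\textrm{Hilb}_k(h^{\mathcal{F}}_t)$ along the geodesic ray.

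First I would use that $\textrm{Rees}(\mathcal{F})$ is a finitely generated, $\comp[\tau]$-flat algebra to obtain a normal test configuration $(\mathcal{X},\mathcal{L}) \to \comp$ for $(X,L)$ with a $\comp^*$-action, $\mathcal{X}_1 = X$, $\mathcal{L}_1 = L$, such that (after replacing $L$ by a divisible Veronese power, which is harmless up to the evident rescaling and a routine interpolation over residues) one has $\bigoplus_\lambda \tau^{-\lambda}\mathcal{F}^\lambda H^0(X,L^{\otimes k}) = H^0(\mathcal{X},\mathcal{L}^{\otimes k})$ as graded $\comp[\tau]$-modules with $\comp^*$-action, for $k$ large. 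Fixing a smooth metric $h^{\mathcal{L}}$ on $\mathcal{L}$, positive along the fibres and restricting to $h^L$ over $\mathcal{X}_1$, and transporting $h^{\mathcal{L}}|_{\mathcal{X}_\tau}$ to $L$ via the $\comp^*$-action, one gets a ray of metrics on $L$ whose associated weak geodesic is $h^{\mathcal{F}}_t$, by Phong--Sturm and Ross--Witt Nyström; differentiating at $t=0$ realizes $\phi(h^L,\mathcal{F}) = -\dot h^{\mathcal{F}}_0$ as the initial velocity of this transported family, which is in particular a continuous function on $X$ (as is already implicit in Theorem \ref{thm_berg_conv}), so that Definition \ref{defn_toepl} applies.

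Next, observe the purely formal identity: if $\{s_{i,k}\}$ is a $\mathcal{F}_k$-adapted orthonormal basis of $\textrm{Hilb}_k(h^L)$ with weights $w_i := w_{\mathcal{F}}(s_{i,k})$, and $H_{k,t}$ is the Hermitian metric with $\langle s_{i,k}, s_{j,k}\rangle_{H_{k,t}} = e^{-t w_i}\delta_{ij}$, then $\frac{d}{dt}\big|_{t=0} H_{k,t} = -A(\mathcal{F}_k, \textrm{Hilb}_k(h^L))$ as a quadratic form on $(H^0(X,L^{\otimes k}), \textrm{Hilb}_k(h^L))$; on the other hand, a direct computation from the definition of $h^{\mathcal{F}}_t$ together with Tian's Bergman kernel expansion gives $\frac{d}{dt}\big|_{t=0}\textrm{Hilb}_k(h^{\mathcal{F}}_t) = -k\, T_{\phi, k} + O(1)$ in operator norm, where $\phi := \phi(h^L,\mathcal{F})$. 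Since $A(\mathcal{F}_k, \textrm{Hilb}_k(h^L))$ and $T_{\phi,k}$ are both $\textrm{Hilb}_k(h^L)$-self-adjoint, the theorem is equivalent to the estimate $\| \frac{d}{dt}\big|_{t=0} H_{k,t} - \frac{d}{dt}\big|_{t=0}\textrm{Hilb}_k(h^{\mathcal{F}}_t) \| = o(k)$, i.e. to the assertion that the quantised geodesic ray built from $H_{k,t}$ and the geodesic ray $h^{\mathcal{F}}_t$ agree to first order at $t = 0$, to leading order in $k$. This is where finite generation is used in an essential way: it makes $h^{\mathcal{F}}_t$ the orbit metric on $(\mathcal{X},\mathcal{L})$, for which the family of fibrewise $L^2$-metrics admits uniform asymptotic control through equivariant Bergman kernel expansions on the total space of $\mathcal{X}$ — performed in a neighbourhood of the smooth fibre $\mathcal{X}_1$, so as to bypass the possibly singular central fibre and the mere relative semiampleness of $\mathcal{L}$ — and this upgrades the known $\mathscr{C}^0$-convergence of quantised geodesic rays to the $\mathscr{C}^1$-level needed here. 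When $h^L$ is not adapted to the action one further reduces to the invariant case, where the statement is the classical Toeplitz property of the quantised moment map (Ma--Marinescu, Ma--Zhang), by comparing $\textrm{Hilb}_k(h^L)$ with the $L^2$-metric of an invariant reference metric along the ray.

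The main obstacle is precisely this last operator-norm estimate, as opposed to its trace-level shadow. Indeed the weak convergence of jumping measures — equivalently the convergence of all traces $\frac{1}{k^n}\mathrm{Tr}\big[ g(\tfrac1k A(\mathcal{F}_k, \textrm{Hilb}_k(h^L))) \big]$ — is already contained, via (\ref{eq_basic_id}), in the bounded case of Theorem \ref{thm_berg_conv} (and in the results on jumping measures it quotes), and even the Hilbert--Schmidt bound $\mathrm{Tr}\big[ (\tfrac1k A(\mathcal{F}_k, \textrm{Hilb}_k(h^L)) - T_{\phi,k})^2 \big] = o(k^n)$ follows from it by a short computation; but this controls the operator norm only to order $o(k^{n/2})$, and bridging the remaining gap genuinely requires off-diagonal information on the weighted Bergman kernel, in particular across the locus where $\phi$ fails to be smooth, which is exactly what the equivariant analysis on $\mathcal{X}$ supplies. (As a by-product, $\frac1k A(\mathcal{F}_k, \textrm{Hilb}_k(h^L)) \to T_{\phi,k}$ in norm recovers, through functional calculus on Toeplitz operators, the finitely generated case of Theorem \ref{thm_berg_conv}.)
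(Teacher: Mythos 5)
Your reduction of the statement to the estimate $\|\frac{d}{dt}|_{t=0}H_{k,t}-\frac{d}{dt}|_{t=0}{\rm Hilb}_k(h^{\mathcal{F}}_t)\|=o(k)$ is a correct reformulation, but at that point the proposal stops: that estimate \emph{is} the theorem (first-order agreement of the quantized ray with the geodesic ray at $t=0$, which Remark \ref{rem_berg_conv}~c) describes as a folklore conjecture that this very theorem only partially justifies), and the mechanism you invoke to close it does not work as described. Two concrete problems. First, the geodesic ray $h^{\mathcal{F}}_t$ is \emph{not} obtained by transporting a smooth fibrewise-positive metric $h^{\mathcal{L}}$ along the $\comp^*$-orbits: it is the solution of the degenerate Monge--Amp\`ere problem (\ref{eq_ma_geod_dir}) with those boundary data, is only $\mathscr{C}^{1,1}$ in general, and its initial velocity $-\phi(h^L,\mathcal{F})$ is not the initial velocity of any particular smooth transported family. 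Consequently "differentiating the transported family at $t=0$" does not compute $\phi(h^L,\mathcal{F})$, and the claim $\frac{d}{dt}|_{t=0}{\rm Hilb}_k(h^{\mathcal{F}}_t)=-kT_{\phi,k}+O(1)$ requires justification for a merely $\mathscr{C}^{1,1}$ path (and a fixed auxiliary volume form, as in ${\rm Hilb}_k(\cdot,\chi)$, to avoid differentiating $c_1(L,h^{\mathcal{F}}_t)^n$). Second, an "equivariant Bergman kernel expansion in a neighbourhood of the smooth fibre $\mathcal{X}_1$" cannot upgrade $\mathscr{C}^0$- to $\mathscr{C}^1$-convergence of the quantized ray: the difficulty sits at the central fibre (equivalently $t\to\infty$), where $\mathcal{X}$ may be singular and the relevant metrics are not smooth and positive, and near $\mathcal{X}_1$ nothing distinguishes a finitely generated filtration from a general one --- yet Example 1 of Section \ref{sect_prel} shows the conclusion fails without finite generation. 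You acknowledge this step as "the main obstacle", which is an accurate self-assessment: the essential content is missing.

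For comparison, the paper circumvents any $\mathscr{C}^1$-statement at $t=0$ about the quantized ray. It uses (i) the purely $\mathscr{C}^0$, but \emph{uniform in $t$}, two-sided comparison $e^{-C(t+k)}{\rm Hilb}_k(h^{\mathcal{F}}_t,\chi)\leq H^{\mathcal{F}}_{t,k}\leq e^{C(t+k)}{\rm Hilb}_k(h^{\mathcal{F}}_t,\chi)$ of Theorem \ref{thm_comp_geod} (this is where finite generation enters, via \cite{FinTits}); (ii) the monotonicity of complex interpolation (Proposition \ref{thm_interpol}), which converts this two-sided bound into $\|A(\mathcal{F}_k,{\rm Hilb}_k(h^L))-\frac{1}{t}T_k(h^{\mathcal{F}}_0,h^{\mathcal{F}}_t)\|\leq Ck/t+C$; and (iii) the independent Theorem \ref{thm_trasnfer}, identifying $\frac{1}{k}T_k(h^{\mathcal{F}}_0,h^{\mathcal{F}}_t)$ as Toeplitz with symbol $t\phi(h^L,\mathcal{F})$, itself proved by sub/super-interpolation via the Ma--Zhang curvature formula and $\epsilon$-geodesics. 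Dividing by $k$ and taking $t$ large then yields the theorem. The linear-in-$t$ error in (i) together with the convexity input (ii) is exactly what replaces the first-order information at $t=0$ that your proposal assumes; without supplying a proof of that first-order comparison, the proposal has a genuine gap at its central step.
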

	\par 
	It turns out that the assumption on finitely generatedness is crucial in Theorem \ref{thm_main1} and quite surprisingly, it cannot be replaced by the regularity assumption on the geodesic ray associated with the filtration, see the end of Section \ref{sect_prel} for an explicit example showing this.
	\par 
	To cover the case of a general bounded submultiplicative filtration, we introduce another definition which, though natural, appears not to have been previously considered.
	\begin{defn}\label{defn_toepl_sch}
		A sequence of operators $T_k \in {\enmr{H^0(X, L^{\otimes k})}}$ is called a \textit{Toeplitz operator of Schatten class} if it is uniformly bounded in operator's norm, i.e. there is $C > 0$, such that $\| T_k \| \leq C$, for any $k \in \nat$, and  there is $f \in L^{\infty}(X)$, called the symbol of $\{T_k\}_{k = 0}^{+ \infty}$, so that for any $\epsilon > 0$, $p \in [1, +\infty[$, there is $k_0 \in \nat$, such that for every $k \geq k_0$, in the notations of Definition \ref{defn_toepl},
		\begin{equation}\label{eq_toepl_schatten}
			\big \| T_k -  T_{f, k} \big \|_p \leq \epsilon,
		\end{equation}
		where $\| \cdot \|_p$ is the $p$-Schatten norm, defined for an operator $A \in {\enmr{V}}$, of a finitely-dimensional Hermitian vector space $(V, H)$ as $\| A \|_p = (\frac{1}{\dim V} {\rm{Tr}}[|A|^p])^{\frac{1}{p}}$, $|A| := (A A^*)^{\frac{1}{2}}$.
	\end{defn}
	\par 
	We can now state the final result of this paper.
	\begin{sloppypar}
	\begin{thm}\label{thm_main2}
		For any bounded submultiplicative filtration $\mathcal{F}$, the rescaled weight operator, $\{ \frac{1}{k} A(\mathcal{F}_k, {\textrm{Hilb}}_k(h^L)) \}_{k = 0}^{+\infty}$, forms a Toeplitz operator of Schatten class with symbol $\phi(h^L, \mathcal{F})$. 
	\end{thm}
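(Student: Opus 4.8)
The plan is to deduce Theorem \ref{thm_main2} from the finitely generated case, Theorem \ref{thm_main1}, by approximation. Two preliminary observations cost nothing. First, for the normalized $p$-Schatten norm one has $\| A \|_p \leq \| A \|_{\infty}$ for all $p \in [1, +\infty[$, since $\frac{1}{\dim V} \mathrm{Tr}[|A|^p] \leq \| A \|_{\infty}^p$; thus Theorem \ref{thm_main1} already yields Theorem \ref{thm_main2} when $\mathcal{F}$ is finitely generated. Second, for any bounded $\mathcal{F}$ the operator $A_k := \frac{1}{k} A(\mathcal{F}_k, \mathrm{Hilb}_k(h^L))$ is uniformly bounded in $k$, because its eigenvalues are the rescaled jumping numbers $e_{\mathcal{F}}(j,k)/k$, which lie in a fixed interval $[-C, C]$ by boundedness of $\mathcal{F}$; this is exactly the uniform operator bound required by Definition \ref{defn_toepl_sch}.

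Given a bounded submultiplicative $\mathcal{F}$, I would approximate it (after a harmless rounding of the weights to make them integral, which perturbs $A_k$ by $O(1/k)$ in every Schatten norm and leaves $\phi(h^L, \mathcal{F})$ unchanged, the geodesic ray depending only on the rescaled weights asymptotically) by the filtrations $\mathcal{F}_{(m)}$ whose Rees algebra is the $\comp[\tau]$-subalgebra of $\mathrm{Rees}(\mathcal{F})$ generated in degrees $\leq m$. A subalgebra of the Rees algebra is automatically a submultiplicative filtration, it has integral weights, and for $m$ large it is finitely generated, bounded (with the same constant $C$), and satisfies $\mathcal{F}_{(m)} \subseteq \mathcal{F}$ and $\mathcal{F}_{(m)} \uparrow \mathcal{F}$ — indeed $\mathcal{F}_{(m)}^{\lambda} H^0(X, L^{\otimes k}) = \mathcal{F}^{\lambda} H^0(X, L^{\otimes k})$ once $m \geq k$. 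The algebraic heart of the argument is that $\mathcal{F}_{(m)} \subseteq \mathcal{F}$ forces the operator inequality $A((\mathcal{F}_{(m)})_k, \mathrm{Hilb}_k(h^L)) \leq A(\mathcal{F}_k, \mathrm{Hilb}_k(h^L))$: writing, after a shift making all weights nonnegative, $A(\mathcal{F}, H) = \int P_{\mathcal{F}^t V}\, dt$ as the integral of orthogonal projections over a bounded $t$-interval, the claim follows from $P_{W'} \leq P_W$ for subspaces $W' \subseteq W$. Consequently $\Delta_{m,k} := A_k - A_k^{(m)}$, where $A_k^{(m)} := \frac{1}{k} A((\mathcal{F}_{(m)})_k, \mathrm{Hilb}_k(h^L))$, is a nonnegative operator with $\| \Delta_{m,k} \|_{\infty} \leq 2C$, so $\mathrm{Tr}[\Delta_{m,k}^p] \leq (2C)^{p-1} \mathrm{Tr}[\Delta_{m,k}]$ and hence $\| \Delta_{m,k} \|_p^p \leq (2C)^{p-1} \| \Delta_{m,k} \|_1$. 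By the identity (\ref{eq_basic_id}) and Theorem \ref{thm_berg_conv} applied to $\mathcal{F}$ and to $\mathcal{F}_{(m)}$ (with $g$ a bounded continuous function equal to the identity on $[-C,C]$, so that $\frac{1}{kN_k}\mathrm{Tr}[A(\mathcal{F}_k, \cdot)] \to \int \phi(h^L, \mathcal{F})\, d\mu$ with $\mu := c_1(L, h^L)^n / \int c_1(L)^n$), one obtains $\lim_{k} \| \Delta_{m,k} \|_1 = \int (\phi(h^L, \mathcal{F}) - \phi(h^L, \mathcal{F}_{(m)}))\, d\mu$.

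It then remains to control the symbols, for which one needs $\| \phi(h^L, \mathcal{F}) - \phi(h^L, \mathcal{F}_{(m)}) \|_{L^p(\mu)} \to 0$ as $m \to \infty$ for every fixed $p$; since these potentials are uniformly bounded, it suffices to have $L^1(\mu)$-convergence, which I would extract from the stability/continuity of the geodesic ray construction $\mathcal{F} \mapsto h_t^{\mathcal{F}}$ along increasing sequences of bounded submultiplicative filtrations (cf. Section \ref{sect_prel} and the convergence of jumping measures in the bounded case). Granting this, fix $\epsilon > 0$ and $p \in [1, +\infty[$, choose $m$ large, then $k$ large, and estimate
\begin{equation*}
\| A_k - T_{\phi(h^L, \mathcal{F}), k} \|_p \leq \| \Delta_{m,k} \|_p + \| A_k^{(m)} - T_{\phi(h^L, \mathcal{F}_{(m)}), k} \|_p + \| T_{\phi(h^L, \mathcal{F}_{(m)}), k} - T_{\phi(h^L, \mathcal{F}), k} \|_p .
\end{equation*}
Here the first term is controlled by the previous paragraph; the second is small by Theorem \ref{thm_main1} for $\mathcal{F}_{(m)}$ together with $\| \cdot \|_p \leq \| \cdot \|_{\infty}$; and the third equals $\| T_{\phi(h^L, \mathcal{F}_{(m)}) - \phi(h^L, \mathcal{F}), k} \|_p$ by linearity of $f \mapsto T_{f,k}$ and is small by the standard fact that $\| T_{f,k} \|_p$ is asymptotically dominated by $\| f \|_{L^p(\mu)}$ (via the classical trace inequality $\mathrm{Tr}[(P M_f P)^p] \leq \mathrm{Tr}[P M_{|f|^p} P]$ for the Bergman projection $P = B_k$, combined with Tian's asymptotics for the Bergman density). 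I expect the principal difficulty to be precisely the $L^1$-continuity of $\mathcal{F} \mapsto \phi(h^L, \mathcal{F})$ under the finitely generated approximation; the operator monotonicity $A((\mathcal{F}_{(m)})_k, \cdot) \leq A(\mathcal{F}_k, \cdot)$, which is what lets one bootstrap a bare trace ($p=1$) bound to all Schatten norms while still allowing operator-norm convergence to fail, is the other key point, and the remaining estimates are soft.
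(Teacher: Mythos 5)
Your overall strategy coincides with the paper's: round the weights to integers, approximate $\mathcal{F}$ by the finitely generated filtrations generated in degrees $\leq m$, exploit the operator monotonicity $A((\mathcal{F}_{(m)})_k, \cdot) \leq A(\mathcal{F}_k, \cdot)$ to turn the error into a positive operator, reduce all Schatten norms to the trace norm, and finish by comparing symbols. Your proof of the monotonicity by writing $A(\mathcal{F}, H) = \int P_{\mathcal{F}^t V}\, dt$ and using $P_{W'} \leq P_W$ is a correct and pleasantly elementary alternative to the paper's appeal to the interpolation-theoretic Proposition \ref{thm_interpol}; that part is fine.

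There are, however, two genuine problems. The decisive one is that the $L^1$-convergence $\phi(h^L, \mathcal{F}_{(m)}) \to \phi(h^L, \mathcal{F})$, which you correctly single out as "the principal difficulty," is simply not proved: "I would extract [it] from the stability/continuity of the geodesic ray construction" is not an argument, and no such continuity statement is available off the shelf for increasing sequences of bounded submultiplicative filtrations. This is exactly where the paper does real work: it first establishes the pointwise monotonicity $\phi(h^L, \mathcal{F}^{[k]}) \leq \phi(h^L, \mathcal{F}^{[k+1]}) \leq \phi(h^L, \mathcal{F})$ by comparing the geodesic rays (\ref{eq_incr_phi00}), then bounds $\int |\phi(h^L, \mathcal{F}^{[k]}) - \phi(h^L, \mathcal{F})|\, c_1(L,h^L)^n$ by the Darvas distance $d_1(h^{\mathcal{F}^{[k]}}_1, h^{\mathcal{F}}_1)$ via Darvas--Lu, and finally forces $d_1 \to 0$ by sandwiching $h^{\mathcal{F}^{[k]}}_1$ between $h^{\mathcal{F}}_1$ and $(h^{\mathcal{F}^{(k)}}_1)^{1/k}$ and invoking \cite[Theorem 5.9]{FinSecRing}. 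Without some version of this chain your proof does not close. The second problem is a circularity: you compute $\lim_k \|\Delta_{m,k}\|_1$ by applying Theorem \ref{thm_berg_conv} to the bounded filtration $\mathcal{F}$, but the first part of Theorem \ref{thm_berg_conv} is itself deduced (in its $L^p$ and uniform-boundedness aspects) from Theorem \ref{thm_main2}, the statement you are proving. This is repairable --- the convergence of the normalized traces is the convergence of the first moments of the jumping measures, available for bounded submultiplicative filtrations from prior work (\cite[Theorem 5.4]{FinSecRing}; the paper itself routes this step through the convergence of volumes ${\rm{vol}}(\mathcal{F}^{[k]}) \to {\rm{vol}}(\mathcal{F})$ due to Boucksom--Jonsson) --- but as written the citation is not legitimate.
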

	\end{sloppypar}
	\par 
	Let us describe the relation between Theorems \ref{thm_main1}, \ref{thm_main2} and Theorem \ref{thm_berg_conv}.
	Remark that it is well known that the space of Toeplitz operators forms an algebra, and the symbol map is an algebra morphism, cf. \cite{BordMeinSchli}, \cite[\S 7]{MaHol}. 
	The analogue of this statement extends to Toeplitz operators of Schatten class, see Proposition \ref{prop_prod_toepl}, and it implies that the vector space of Toeplitz operators of Schatten class is closed under the continuous functional calculus.
	As we shall explain in Section \ref{sect_func_calcul}, this result along with some basic properties of the diagonal kernel show that Theorem \ref{thm_main1} implies the second part of Theorem \ref{thm_berg_conv}, and Theorem \ref{thm_main2} implies the first part of Theorem \ref{thm_berg_conv}, if one replaces the pointwise convergence by the convergence in $L^p(X)$-spaces, for any $p \in [1, +\infty[$.
	However, the diagonal kernels of Toeplitz operators of Schatten class do not necessarily converge pointwise (just as convergence in $L^p(X)$ of a sequence of functions doesn't imply the pointwise convergence), and so Theorem \ref{thm_berg_conv} is not a consequence of Theorems \ref{thm_main1}, \ref{thm_main2}.
	\par
	This paper is organized as follows. 
	In Section \ref{sect_prel}, we recall the preliminaries.
	In Section \ref{sect_pf}, we prove Theorems \ref{thm_main1}, \ref{thm_main2} modulo a similar statement concerning the transfer operator between $L^2$-norms, which we establish in Section \ref{sect_transfer}.
	In Section \ref{sect_func_calcul}, we discuss the convergence properties of the diagonal kernels of Toeplitz operators and as a consequence establish Theorem \ref{thm_berg_conv} where the pointwise convergence is replaced by the convergence in $L^p(X)$-spaces, for any $p \in [1, +\infty[$.
	The pointwise convergence part from Theorem \ref{thm_berg_conv} is then established in Section \ref{sect_subadd}.
	\par 
	To conclude, we emphasize that we used the results of this paper in \cite{FinHYM} to construct approximate solutions to a certain equation arising in relation to the Wess-Zumino-Witten equation.

	\paragraph{Notation.}  
	A \textit{filtration} $\mathcal{F}$ of a vector space $V$ is a map from $\real$ to vector subspaces of $V$, $t \mapsto \mathcal{F}^t V$, verifying $\mathcal{F}^t V \subset \mathcal{F}^s V$ for $t > s$, and such that $\mathcal{F}^t V  = V$ for sufficiently small $t$ and $\mathcal{F}^t V = \{0\}$ for sufficiently big $t$.
	We assume that this map is left-continuous, i.e. for any $t \in \real$, there is $\epsilon_0 > 0$, such that $\mathcal{F}^t V = \mathcal{F}^{t - \epsilon} V $ for any $0 < \epsilon < \epsilon_0$.
	\par 
	For a given function $f$ on a topological space, we denote by $f^*$ (resp. $f_*$) the upper (resp. lower) semi-continuous regularization of $f$. The same notations are used for metrics on line bundles. 
	\par 
	By a positive Hermitian metric on a line bundle we mean a smooth Hermitian metric with strictly positive curvature.
	For a positive Hermitian metric $h^L$ on $L$ (resp. and a fixed Kähler form $\chi$), we denote by ${\textrm{Hilb}}_k(h^L)$ (resp. ${\textrm{Hilb}}_k(h^L, \chi)$) the Hermitian metric on $H^0(X, L^{\otimes k})$ defined for arbitrary $s_1, s_2 \in H^0(X, L^{\otimes k})$ as follows 
	\begin{equation}\label{eq_defn_l2}
		\begin{aligned}
			&
			\langle s_1, s_2 \rangle_{{\textrm{Hilb}}_k(h^L)} = \frac{1}{n!} \int_X \langle s_1(x), s_2(x) \rangle_{{h^{L^{\otimes k}}}} \cdot c_1(L, h^L)^n,
			\\
			&
			\langle s_1, s_2 \rangle_{{\textrm{Hilb}}_k(h^L, \chi)} = \frac{1}{n!} \int_X \langle s_1(x), s_2(x) \rangle_{{h^{L^{\otimes k}}}} \cdot \chi^n.
		\end{aligned}
	\end{equation}
	\par 
	Let $V$ be a complex vector space, $\dim V = n$, and $\mathcal{H}_V$ be the space of Hermitian norms on $V$. 
	For $H_0, H_1 \in \mathcal{H}_V$, the \textit{transfer map}, $T \in {\rm{End}}(V)$, between $H_0, H_1$, is the Hermitian operator (with respect to both $H_0, H_1$), defined so that the Hermitian products $\langle \cdot, \cdot \rangle_{H_0}, \langle \cdot, \cdot \rangle_{H_1}$ induced by $H_0$ and $H_1$, are related as $\langle \cdot, \cdot \rangle_{H_1} = \langle \exp(-T) \cdot, \cdot \rangle_{H_0}$.
	For Hermitian $A_0, A_1 \in {\rm{End}}(V)$ on a Hermitian vector space $(V, H)$, we say that $A_0 \geq A_1$ if the difference $A_0 - A_1$ is positively-definite.
	For a $\mathscr{C}^1$-path $H_t \in \mathcal{H}_V$, we denote by $\dot{H}_t := H_t^{-1} \frac{d}{dt} H_t$.
	\par 
	We denote by $\mathbb{D}(a, b)$ the complex annuli if inner and outer radiuses $a$ and $b$ respectively.
	We denote by $\mathbb{D}$ the complex unit disc.
	For a Kähler form $\omega$ on $X$, we denote by ${\rm{PSH}}(X, \omega)$ the space of $\omega$-quasipsh potentials, consisting of functions $\psi : X \to [- \infty, +\infty[$, which are locally the sum of a psh function and of a smooth function so that the $(1, 1)$-current $\omega + \imun \partial \dbar \psi$ is positive.
	We denote $\pi : X \times \mathbb{D}(e^{-1}, 1) \to X$ and $z : X \times \mathbb{D}(e^{-1}, 1) \to \mathbb{D}(e^{-1}, 1)$ the usual projections.
	\par 
	For $T_k \in {\rm{End}}(H^0(X, L^{\otimes k}))$ and $x \in X$, we denote 
	\begin{equation}
		T_k(x) := \sum_{i = 1}^{N_k} \big\langle T_k s_{i, k}(x), s_{i, k}(x) \big\rangle_{h^{L^{\otimes k}}},
	\end{equation}
	in the notations of (\ref{eq_weight_berg}), and call it the diagonal kernel of $T_k$.
	Recall that the diagonal Bergman kernel $B_k(x)$, $k \in \nat$, $x \in X$, defined for an orthonormal basis $s_{i, k}$, $i = 1, \ldots, N_k$, $N_k := \dim H^0(X, L^{\otimes k})$, of $(H^0(X, L^{\otimes k}), {\textrm{Hilb}}_k(h^L))$, as $B_k(x) = \sum_{i = 1}^{N_k}  |s_{i, k}(x)|_{h^{L^{\otimes k}}}^2$.
We also define $B_k(x, y) \in L_x^{\otimes k} \otimes (L_y^{\otimes k})^*$ as $B_k(x, y) = \sum_{i = 1}^{N_k}  s_{i, k}(x) \otimes s_{i, k}(y)^*$.
	
	\paragraph{Acknowledgement.} 
	Author acknowledges the support of CNRS and École Polytechnique. 
	
	\section{The spaces of norms, Kähler forms and geodesic rays}\label{sect_prel}
	The primary aim of this section is to revisit some well-known results that will be utilized in the following sections. 
	Specifically, we begin by discussing results related to the geometry of the space of Hermitian norms on a finite-dimensional vector space and complex interpolation. 
	We also recall the connection with the space of filtrations.
	\par 
	Next, we examine the Mabuchi geometry of the space of Kähler potentials and recall the correspondence between test configurations and submultiplicative filtrations. 
	This is followed by a construction of geodesic rays from test configurations and submultiplicative filtrations.
	\par 
	The section concludes with an explicit calculation of a geodesic ray associated with two filtrations, highlighting the sharpness of the paper’s main results.
	
	\par 
	\paragraph{Geodesics in the space of Hermitian norms.}
	The space $\mathcal{H}_V$ of  Hermitian norms on a complex vector space $V$, $\dim V = n$, carries a natural metric, cf. \cite[\S 6]{BhatiaBook}.
	A path $H_t \in \mathcal{H}_V$, $t \in [0, 1]$, is the \textit{geodesic} between $H_0, H_1 \in \mathcal{H}_V$ with respect to this metric, if for the transfer map, $T \in {\rm{End}}(V)$, between $H_0, H_1$, the endomorphism $t T$, $t \in [0, 1]$ is the transfer map between $H_0$ and $H_t$.
	\par 
	It is possible to view the space of filtrations on $V$ as the boundary at the infinity of $\mathcal{H}_V$, where the latter space is interpreted in terms of geodesic rays.
	For this, for any filtration $\mathcal{F}$ on $V$, we associate a geodesic ray in $\mathcal{H}_V$ as follows.
	We fix $H_V \in \mathcal{H}_V$, and define $H_t^{\mathcal{F}} \in \mathcal{H}_V$, $t \in [0, +\infty[$, through the associated Hermitian product as $\langle \exp(-t A(\mathcal{F}, H_V)) \cdot, \cdot \rangle_{H_V}$, where $A(\mathcal{F}, H_V) \in {\rm{End}}(V)$ is the weight operator associated with $\mathcal{F}$, defined in the Introduction.
	It is immediate that $H_t^{\mathcal{F}}$ is a geodesic ray emanating from $H_V$.
	\par
	It is well known that geodesics between Hermitian norms can be seen through the lens of complex interpolation theory, cf. \cite[Theorem 5.4.1]{InterpSp}.
	Let us recall the following crucial statement concerning the order-preserving properties of complex interpolation theory.
	\begin{prop}[{cf. \cite[Theorem 5.4.1]{InterpSp} and \cite[Proposition 4.12]{FinTits}}]\label{thm_interpol}
		For any $H_i \in \mathcal{H}_V$, $i = 0, 1, 2$, verifying $H_1 \leq H_2$, the geodesics $H_t^1$, $H_t^2$, $t \in [0, 1]$, between $H_0$, $H_1$ and $H_0$, $H_2$ respectively, compare as $H_t^1 \leq H_t^2$.
		In particular, we have $H_0^{-1} \dot{H}^1_0  \leq H_0^{-1} \dot{H}^2_0$.
		Similarly, for any filtrations $\mathcal{F}_i$, $i = 1, 2$, on $V$, so that their weight functions are related as $w_{\mathcal{F}_1} \geq w_{\mathcal{F}_2}$, the associated geodesic rays $H_t^{\mathcal{F}_1}$, $H_t^{\mathcal{F}_2}$, $t \in [0, +\infty[$, emanating from $H_0$ compare as $H_t^{\mathcal{F}_1} \leq H_t^{\mathcal{F}_2}$.
		In particular, we have $A(\mathcal{F}_1, H_0) \geq A(\mathcal{F}_2, H_0)$.
	\end{prop}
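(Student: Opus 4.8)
The plan is to prove the two assertions separately: the comparison of geodesic \emph{segments} between norms will come from the order-preserving property of complex interpolation, while the comparison attached to \emph{filtrations} I would obtain by an elementary telescoping (Abel summation) argument.

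For the statement on norms, I would use the identification of the geodesic $H^j_t$ between $H_0$ and $H_j$ with the complex interpolation norm $[H_0, H_j]_t$ recalled above, together with \cite[Theorem 5.4.1]{InterpSp}: for $v \in V$, the quantity $\|v\|_{[H_0, H_j]_t}$ is the infimum, over bounded continuous $V$-valued maps $f$ on the closed strip $\{0 \le \Re z \le 1\}$ that are holomorphic inside and satisfy $f(t) = v$, of $\max\big(\sup_{r \in \real}\|f(\imun r)\|_{H_0},\, \sup_{r \in \real}\|f(1 + \imun r)\|_{H_j}\big)$. The admissible maps $f$ do not depend on $H_j$, and passing from $H_1$ to $H_2 \ge H_1$ only enlarges the quantity under the infimum; hence $H^1_t \le H^2_t$. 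Since $H^1_0 = H^2_0 = H_0$, dividing $H^2_t - H^1_t \ge 0$ by $t > 0$ and letting $t \to 0^+$ gives $H_0^{-1}\dot H^1_0 \le H_0^{-1}\dot H^2_0$.

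For the statement on filtrations, I would argue directly. Left-continuity of filtrations gives the equivalence $v \in \mathcal{F}^{\lambda}V \iff w_{\mathcal{F}}(v) \ge \lambda$, so $w_{\mathcal{F}_1} \ge w_{\mathcal{F}_2}$ amounts to $\mathcal{F}_2^{\lambda}V \subseteq \mathcal{F}_1^{\lambda}V$ for every $\lambda \in \real$; writing $\Pi^i_{\lambda}$ for the $H_0$-orthogonal projector onto $\mathcal{F}_i^{\lambda}V$, this yields $\|\Pi^2_{\lambda}v\|_{H_0} \le \|\Pi^1_{\lambda}v\|_{H_0}$ for all $v \in V$. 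Next I would let $\lambda_1 > \cdots > \lambda_m$ list together the jumping values of $\mathcal{F}_1$ and of $\mathcal{F}_2$ (in particular $\mathcal{F}_i^{\lambda_m}V = V$ for $i = 1, 2$) and set $\lambda_0 := +\infty$, so that the eigenprojector of $A_i := A(\mathcal{F}_i, H_0)$ for the eigenvalue $\lambda_j$ is $\Pi^i_{\lambda_j} - \Pi^i_{\lambda_{j-1}}$; then for any function $f : \real \to \real$, Abel summation gives
\[
\big\langle f(A_i)v, v\big\rangle_{H_0} = f(\lambda_m)\|v\|_{H_0}^2 + \sum_{j=1}^{m-1}\big(f(\lambda_j) - f(\lambda_{j+1})\big)\,\|\Pi^i_{\lambda_j}v\|_{H_0}^2 .
\]
Taking the decreasing function $f(x) = e^{-tx}$ with $t \ge 0$, all coefficients $f(\lambda_j) - f(\lambda_{j+1})$ are $\le 0$, so replacing each $\|\Pi^2_{\lambda_j}v\|_{H_0}^2$ by the larger $\|\Pi^1_{\lambda_j}v\|_{H_0}^2$ only decreases the right-hand side; this gives $\langle e^{-tA_1}v, v\rangle_{H_0} \le \langle e^{-tA_2}v, v\rangle_{H_0}$ for all $v$, i.e. $H^{\mathcal{F}_1}_t \le H^{\mathcal{F}_2}_t$ for every $t \ge 0$. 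Taking instead the increasing function $f(x) = x$, all coefficients are $\ge 0$, whence $\langle A_1 v, v\rangle_{H_0} \ge \langle A_2 v, v\rangle_{H_0}$, i.e. $A(\mathcal{F}_1, H_0) \ge A(\mathcal{F}_2, H_0)$.

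I do not expect a serious obstacle: the first assertion is precisely the monotonicity of complex interpolation in the endpoint data, and the second is a finite computation. The points that need a little care are the passage from the pointwise inequality of weight functions to the inclusion of filtered subspaces (where left-continuity is used), running the telescoping identity simultaneously for both filtrations over a common set of jumping values, and keeping track of the $H_0^{-1}$ and of signs when differentiating at $t = 0$. One could alternatively deduce the filtration statement from the norm statement, since each truncation $H^{\mathcal{F}}_s$ is a genuine norm and $H^{\mathcal{F}}_t$ for $t \le s$ is the corresponding geodesic reparametrized; but that route still needs the operator inequality $e^{-sA_1} \le e^{-sA_2}$, so the direct telescoping seems preferable.
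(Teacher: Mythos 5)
Your argument is correct. Note that the paper does not actually prove this proposition: it is stated with citations to \cite[Theorem 5.4.1]{InterpSp} and \cite[Proposition 4.12]{FinTits}. Your treatment of the first assertion follows exactly the route those citations encode — identifying the geodesic $H_t^j$ with the Calder\'on interpolation norm $[H_0,H_j]_t$ and reading off monotonicity in the endpoint from the infimum over admissible holomorphic maps on the strip (in finite dimension the admissible class is indeed norm-independent, so the comparison is immediate), after which the derivative inequality follows from the difference quotient since $H^1_0=H^2_0$. For the filtration assertion you replace the citation by a self-contained spectral argument, and it checks out: the adapted-basis description of $A(\mathcal{F}_i,H_0)$ shows that its spectral projector onto eigenvalues $\geq\lambda$ is the $H_0$-orthogonal projector $\Pi^i_\lambda$ onto $\mathcal{F}_i^\lambda V$; left-continuity converts $w_{\mathcal{F}_1}\geq w_{\mathcal{F}_2}$ into $\mathcal{F}_2^\lambda V\subseteq\mathcal{F}_1^\lambda V$, hence $\|\Pi^2_\lambda v\|\leq\|\Pi^1_\lambda v\|$; and the Abel summation over a common list of jumping values (with the boundary term $f(\lambda_m)\|v\|^2$ identical for both filtrations since $\mathcal{F}_i^{\lambda_m}V=V$) gives the two inequalities with the correct signs for $f(x)=e^{-tx}$ and $f(x)=x$ respectively. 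This is arguably more elementary than deducing the filtration case as a degenerate limit of the segment case, which is what the reference to \cite{FinTits} suggests; the only thing the interpolation route would buy in addition is a uniform framework for both halves, whereas your telescoping computation is finite-dimensional linear algebra and makes the role of left-continuity explicit.
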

	
	\paragraph{Mabuchi geodesics.}
	We denote by the $\mathcal{H}_L$ space of positive Hermitian metrics on $L$.
	Upon fixing $h^L_0 \in \mathcal{H}_L$, one can identify $\mathcal{H}_L$ with the space $\mathcal{H}_{\omega}$ of Kähler potentials of $\omega := 2 \pi c_1(L, h^L_0)$, consisting of $u \in \ccal^{\infty}(X, \real)$, such that $\omega_u := \omega + \imun \partial \dbar u$ is strictly positive, through the map 	
	\begin{equation}\label{eq_pot_metr_corr}
		u \mapsto h^L := e^{- u} \cdot h^L_0.
	\end{equation}
	Mabuchi in \cite{Mabuchi} introduced a certain metric on $\mathcal{H}_{\omega}$, the geodesics of which admit the description as solutions to a certain homogeneous Monge-Ampère equation.
	To recall this, we identify paths $u_t \in \mathcal{H}_{\omega}$, $t \in [0, 1]$, with rotationally-invariant $\hat{u} : X \times \mathbb{D}(e^{-1}, 1) \to \real$, as follows
	\begin{equation}\label{eq_defn_hat_u}
		\hat{u}(x, \tau) = u_{t}(x), \quad \text{where} \quad x \in X \, \text{ and } \, t = - \log |\tau|.
	\end{equation}
	According to \cite{Semmes}, \cite{DonaldSymSp} smooth geodesic segments in Mabuchi space can be described as the only path $u_t \in \mathcal{H}_{\omega}$, $t \in [0, 1]$, connecting $u_0$ to $u_1$, so that $\hat{u}$ is the solution of the Dirichlet problem associated with the homogeneous Monge-Ampère equation
	\begin{equation}\label{eq_ma_geod}
		(\pi^* \omega + \imun \partial \dbar \hat{u})^{n + 1} = 0,
	\end{equation}
	with boundary conditions $\hat{u}(x, e^{\imun \theta}) = u_0(x)$, $\hat{u}(x, e^{-1 + \imun \theta}) = u_1(x)$, $x \in X, \theta \in [0, 2\pi]$.
	By the work of X. Chen \cite{ChenGeodMab} and later compliments by B{\l}ocki \cite{BlockiGeod} and Chu-Tosatti-Weinkove \cite{ChuTossVeinC11}, we now know that $\mathscr{C}^{1, 1}$ solutions to (\ref{eq_ma_geod}) always exist.
	\par 
	Berndtsson in \cite[\S 2.2]{BernBrunnMink} also proved that even for $u_0, u_1 \in {\rm{PSH}}(X, \omega) \cap L^{\infty}(X)$,  \textit{weak solutions} to (\ref{eq_ma_geod}) exist, i.e. (\ref{eq_ma_geod}) has solutions when the wedge power is interpreted in Bedford-Taylor sense \cite{BedfordTaylor} and the boundary conditions mean that $\| u_{\epsilon} - u_0 \|_{L^{\infty}(X)} \to 0$ and $\| u_{1 - \epsilon} - u_1 \|_{L^{\infty}(X)} \to 0$, as $\epsilon \to 0$.
	We then have $u_t \in {\rm{PSH}}(X, \omega) \cap L^{\infty}(X)$, but $u_t$ are highly non-regular in general.
	However, since $u_t(x)$ is convex in $t \in [0, 1]$, cf. \cite[Theorem I.5.13]{DemCompl}, the one-sided derivatives $\dot{u}_t^{-}$, $\dot{u}_t^{+}$ of $u_t$ are well-defined for $t \in ]0, 1[$ and they increase in $t$.
	In particular, one can define the derivative of the geodesic segment in this case as $\dot{u}_0 := \lim_{t \to 0} \dot{u}_t^{-} = \lim_{t \to 0} \dot{u}_t^{+}$.
	From \cite[\S 2.2]{BernBrunnMink}, we know that $\dot{u}_0$ is bounded and by Darvas \cite[Theorem 1]{DarvWeakGeod}, we, moreover, have
	\begin{equation}\label{eq_bnd_darvas_sup}
		\sup |\dot{u}_0| \leq \sup |u_1 - u_0|.
	\end{equation}
	Analogously to Theorem \ref{thm_interpol}, we have the following comparison result, which follows directly from the envelope description of the Mabuchi geodesics, cf. \cite[(2.1)]{BernBrunnMink}.
	\begin{prop}\label{prop_comp_geod}
		For any $u_0 \in \mathcal{H}_{\omega}$,  $u_i \in {\rm{PSH}}(X, \omega) \cap L^{\infty}(X)$, $i = 1, 2$, verifying $u_1 \leq u_2$, the geodesics $u_t^1$, $u_t^2$, $t \in [0, 1]$, between $u_0$, $u_1$ and $u_0$, $u_2$ respectively, compare as $u_t^1 \leq u_t^2$.
		In particular, the respective derivatives at zero compare as $\dot{u}^1_0 \leq \dot{u}^2_0$.
	\end{prop}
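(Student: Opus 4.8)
The plan is to deduce both assertions from the Perron--Bremermann envelope description of the weak Mabuchi geodesic recalled in \cite[(2.1)]{BernBrunnMink}. Call a path $v_t$, $t \in ]0,1[$, a \emph{subgeodesic} joining $u_0 \in \mathcal{H}_{\omega}$ to $u \in {\rm{PSH}}(X, \omega) \cap L^{\infty}(X)$ if the associated rotationally invariant function $\hat v$ on $X \times \mathbb{D}(e^{-1}, 1)$, built from $v_t$ as in (\ref{eq_defn_hat_u}), extends to an element of ${\rm{PSH}}(X \times \mathbb{D}(e^{-1}, 1), \pi^* \omega)$ satisfying $\limsup_{t \to 0^+} v_t \leq u_0$ and $\limsup_{t \to 1^-} v_t \leq u$. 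The weak geodesic $u_t$ between $u_0$ and $u$ is then the upper envelope
\begin{equation}\label{eq_geod_env_comp}
	u_t(x) = \Big( \sup \big\{ v_t(x) : v \text{ is a subgeodesic joining } u_0 \text{ to } u \big\} \Big)^*,
\end{equation}
where the upper regularization and the passage to the rotationally symmetric sup are harmless: the family is locally bounded above (e.g. by the solution of the Dirichlet problem (\ref{eq_ma_geod}) with boundary data $\max(u_0, u)$) and stable under rotation in $\tau$, so its regularization remains $\pi^* \omega$-psh, rotationally invariant, and solves (\ref{eq_ma_geod}) with the prescribed boundary values — this is precisely \cite[\S 2.2]{BernBrunnMink}, and it is here that $u_0 \in \mathcal{H}_{\omega}$ (hence bounded, indeed smooth) enters, to make these endpoints admissible.

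First I would observe that every subgeodesic joining $u_0$ to $u_1$ is also a subgeodesic joining $u_0$ to $u_2$: the only constraint that changes is the one at $t \to 1^-$, and $\limsup_{t \to 1^-} v_t \leq u_1 \leq u_2$. Hence the supremum in (\ref{eq_geod_env_comp}) defining $u_t^1$ is taken over a subfamily of the one defining $u_t^2$; since taking suprema and upper regularizations preserves inequalities, this yields $u_t^1 \leq u_t^2$ for all $t \in [0,1]$, the cases $t = 0, 1$ being immediate from the boundary data.

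Finally, for the derivatives at zero, recall from the discussion after (\ref{eq_ma_geod}) that for each $x \in X$ the function $t \mapsto u_t^i(x)$ is convex on $[0,1]$ with $u_0^1(x) = u_0^2(x) = u_0(x)$, so the difference quotient $t \mapsto (u_t^i(x) - u_0(x))/t$ is nondecreasing on $]0,1[$ and
\begin{equation*}
	\dot{u}_0^i(x) = \lim_{t \to 0^+} \frac{u_t^i(x) - u_0(x)}{t} = \inf_{t \in ]0,1[} \frac{u_t^i(x) - u_0(x)}{t}.
\end{equation*}
Since $u_t^1(x) \leq u_t^2(x)$ for every $t$, passing to the infimum over $t$ gives $\dot{u}_0^1 \leq \dot{u}_0^2$, as claimed. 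I do not expect a genuine obstacle here: the only delicate point is the legitimacy of the envelope (\ref{eq_geod_env_comp}) as the weak geodesic with the correct boundary values, which is contained in \cite{BernBrunnMink}; everything else is monotonicity bookkeeping.
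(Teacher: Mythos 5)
Your proposal is correct and follows exactly the route the paper indicates: the paper's proof is precisely the remark that the comparison ``follows directly from the envelope description of the Mabuchi geodesics, cf.\ \cite[(2.1)]{BernBrunnMink}'', and you have simply spelled out that envelope argument (monotonicity of the family of subgeodesics under relaxing the boundary constraint) together with the standard convexity argument for the derivatives at $t=0$.
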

	
	\paragraph{Test configurations and submultiplicative filtrations.}
	Recall, cf. \cite{TianTestConfig}, \cite{DonaldTestConfig}, that a test configuration $\mathcal{T} = (\pi: \mathcal{X} \to \comp, \mathcal{L})$ for $(X, L)$ consists of the following data
	\begin{enumerate}
		\item A scheme $\mathcal{X}$ with a $\comp^*$-action $\rho$,
		\item A $\comp^*$-equivariant \textit{ample} line bundle $\mathcal{L}$ over $\mathcal{X}$,
		\item A flat $\comp^*$-equivariant projection $\pi : \mathcal{X} \to \comp$, where $\comp^*$ acts on $\comp$ by multiplication, such that if we denote its fibers by $X_{\tau} := \pi^{-1}(\tau)$, $\tau \in \comp$, then $(X_1, \mathcal{L}|_{X_1})$ is isomorphic to $(X, L)$.
	\end{enumerate}
	Remark that the $\comp^*$-action induces the canonical isomorphisms
	\begin{equation}\label{eq_can_ident_test}
		\mathcal{X} \setminus X_0 \simeq \comp^* \times X, \qquad \mathcal{L}|_{\mathcal{X} \setminus X_0} \simeq p^* L,
	\end{equation}
	where $p : \comp^* \times X \to X$ is the natural projection.
	
	\par 
	Following Witt Nystr{\"o}m \cite[Lemma 6.1]{NystOkounTest}, one can construct a submultiplicative filtration  $\mathcal{F}^{\mathcal{T}}$ on $R(X, L)$ associated with $\mathcal{T}$ as follows.
	Pick an element $s \in H^0(X, L^k)$, $k \in \nat^*$, and consider the section $\tilde{s} \in H^0(\mathcal{X} \setminus X_0, \mathcal{L}^k)$, obtained by the application of the $\comp^*$-action to $s$.
	By the flatness of $\pi$, the section $\tilde{s}$ extends to a meromorphic section over $\mathcal{X}$, cf. Witt Nystr{\"o}m \cite[Lemma 6.1]{NystOkounTest}.
	In other words, there is $l \in \integ$, such that for a coordinate $\tau$ on $\comp$, we have $\tilde{s} \cdot \tau^l \in H^0(\mathcal{X}, \mathcal{L}^k)$.
	We define the restriction $\mathcal{F}^{\mathcal{T}}_k$ of the filtration $\mathcal{F}^{\mathcal{T}}$ to $H^0(X, L^k)$ as
	\begin{equation}\label{eq_defn_filt_test}
		\mathcal{F}^{\mathcal{T} \lambda}_k H^0(X, L^k)
		:=
		2 \cdot \Big\{
			s \in H^0(X, L^k) : \tau^{- \lceil \lambda \rceil} \cdot \tilde{s} \in H^0(\mathcal{X}, \mathcal{L}^k)
		\Big\}, 
		\quad \lambda \in \real.
	\end{equation}
	Remark the non-standard normalization by the factor $2$ in (\ref{eq_defn_filt_test}), the motivation for which comes from \cite[Theorem 1.1]{FinTits}.
	One can verify that the filtration $\mathcal{F}^{\mathcal{T}}$ is finitely generated and, up to a restriction to $R(X, L^d) \subset R(X, L)$, for some $d \in \nat$, an arbitrary finitely generated filtration is produced from a test configuration for $(X, L^{\otimes d})$, cf. \cite[(9)]{NystOkounTest} or \cite[\S A.2]{BouckJohn21}.
	
	\paragraph{Construction of geodesic rays in Mabuchi space.}
	Consider the restriction $\pi': \mathcal{X}'_{\mathbb{D}} \to \mathbb{D}$ of a resolution of singularities $\mathcal{T}' := (\pi': \mathcal{X}' \to \comp, \mathcal{L}')$ of a test configuration $\mathcal{T}$ for $(X, L)$ to the unit disc $\mathbb{D}$ and denote $\mathcal{L}'_{\mathbb{D}} := \mathcal{L}'|_{\mathcal{X}'_{\mathbb{D}}}$.
	Phong-Sturm in \cite[Theorem 3]{PhongSturmDirMA} established that for any fixed smooth positive metric $h^L_0$ on $L$, there is a rotation-invariant bounded psh metric $h^{\mathcal{L}'}_{\mathbb{D}}$ over $\mathcal{L}'_{\mathbb{D}}$, solving weakly the Monge-Ampère equation
	\begin{equation}\label{eq_ma_geod_dir}
		c_1(\mathcal{L}'_{\mathbb{D}}, h^{\mathcal{L}'}_{\mathbb{D}})^{n + 1} = 0,
	\end{equation}
	and such that its restriction over $\partial \mathcal{X}'_{\mathbb{D}}$ coincides with the rotation-invariant metric obtained from the fixed metric $h^L_0$ on $L$.
	Under the identification (\ref{eq_can_ident_test}), we then construct a ray $h^{\mathcal{T}}_t$, $t \in [0, + \infty[$, of metrics on $L$, such that $\hat{h}^{\mathcal{T}} = h^{\mathcal{L}'}_{\mathbb{D}}$ in the notations (\ref{eq_defn_hat_u}).
	Due to the equation (\ref{eq_ma_geod_dir}) and the description of the geodesic ray as in (\ref{eq_ma_geod}), we see that the ray of metrics $h^{\mathcal{T}}_t$, $t \in [0, + \infty[$, is a geodesic ray emanating from $h^L_0$.
	This ray of metrics is $\mathscr{C}^{1, 1}$, see \cite{PhongSturmRegul}, \cite{ChuTossVeinC11}.	
	\par 
	More generally, one can construct a geodesic ray from an arbitrary bounded submultiplicative filtration.
	For this, we first recall the well-known construction which associates for an arbitrary Hermitian norm $H_k$ on $H^0(X, L^{\otimes k})$, for $k \in \nat$ so that $L^{\otimes k}$ is very ample, a positive metric $FS(H_k)$  on $L^{\otimes k}$, is constructed for any $l \in L_x^{\otimes k}$, $x \in X$, as follows
	\begin{equation}\label{eq_defn_fs}
		|l|_{FS(H_k)}^{2}
		:=
		\frac{|l|_{h^{L^{\otimes k}}_0}^{2}}{\sum_{i = 1}^{N_k} |s_{i, k}(x)|_{h^{L^{\otimes k}}_0}^{2}},
	\end{equation}
	where $s_{i, k} \in H^0(X, L^{\otimes k})$, is an orthonormal basis of $(H^0(X, L^{\otimes k}), H_k)$.
	\par 
	Now, let $\mathcal{F}$ be a bounded submultiplicative filtration on $R(X, L)$.
	We fix a smooth positive metric $h^L_0$ on $L$, and for any $t \in [0, +\infty[$, $k \in \nat$, we define, following Ross-Witt Nystr{\"o}m \cite{RossNystAnalTConf}, $H^{\mathcal{F}}_{t, k}$ as the (geodesic) ray of Hermitian norms on $H^0(X, L^k)$ emanating from ${\rm{Hilb}}_k(h^L_0)$ and associated with the restriction $\mathcal{F}_k$ of $\mathcal{F}$ to $H^0(X, L^{\otimes k})$.
	We denote by $h^{\mathcal{F}}_t$, $t \in [0, +\infty[$, the ray of metrics on $L$, constructed as follows
	\begin{equation}\label{eq_geod_ray_filt}
		h^{\mathcal{F}}_t := \Big( \lim_{k \to \infty} \inf_{l \geq k} \big( FS(H^{\mathcal{F}}_{t, l})^{\frac{1}{l}} \big) \Big)_{*}.
	\end{equation}
	It was established in \cite{RossNystAnalTConf}, following previous work \cite{PhongSturmTestGeodK} treating the finitely generated case, that $h^{\mathcal{F}}_t$ is a geodesic ray, i.e. it solves (\ref{eq_ma_geod}) weakly, cf. also \cite[Theorem 5.1]{FinNarSim} for an independent proof based on quantization \cite{ChenSunQuant} and Mabuchi geometry \cite{DarLuGeod}.
	In general, however, the metrics $h^{\mathcal{F}}_t$ are only bounded.
	It was established in \cite{PhongSturmRegul} that the two constructions of geodesic rays, (\ref{eq_ma_geod_dir}) and (\ref{eq_geod_ray_filt}), are compatible when the filtration is associated with a test configuration.
	\par 
	It is a well-known open problem to study the precise convergence rate of $FS(H^{\mathcal{F}}_{t, k})^{\frac{1}{k}}$ towards $h^{\mathcal{F}}_t$, see \cite{SongZeldToric} for example.
	Let us put Theorem \ref{thm_berg_conv} in this context.
	Let $x \mapsto B_k^{\mathcal{F}}(x)$ denote the weighted Bergman kernel, as defined in (\ref{eq_weight_berg}) for the function $g(x) := x$. 
	Directly from (\ref{eq_defn_fs}), we see that
	\begin{equation}\label{eq_der_zeld_song}
		FS(H^{\mathcal{F}}_{0, k})^{-\frac{1}{k}} \frac{d}{dt} FS(H^{\mathcal{F}}_{t, k})^{\frac{1}{k}}|_{t = 0}
		=
		- \frac{1}{B_k(x)} B_k^{\mathcal{F}}(x).
	\end{equation}
	Recall now that a well-known result of Tian, \cite{TianBerg}, says
	\begin{equation}\label{eq_thm_tian}
		\frac{1}{k^n} B_k(x) \quad \text{converges uniformly to 1, as } k \to \infty,
	\end{equation}
	see also \cite{ZeldBerg}, \cite{Caltin}, \cite{Bouche}, \cite{MaHol} for more refined results.
	In particular, Theorem \ref{thm_berg_conv} can be interpreted as a statement about convergence of the derivative of the initial point of the ray $FS(H^{\mathcal{F}}_{t, k})^{\frac{1}{k}}$ towards $\dot{h}^{\mathcal{F}}_0$, as $k \to \infty$, giving a partial justification for the $\mathscr{C}^1$-convergence.
	
	\paragraph{Example 1.} 
	The goal of the following example is to show that the assumption of finite degeneration in Theorem \ref{thm_berg_conv} is necessary and cannot be replaced by the regularity assumption on the geodesic ray associated with the filtration.
 	The results of Section \ref{sect_func_calcul} will also imply that the weight operator associated with the filtration constructed here is not a Toeplitz operator, and so the assumptions of generation in Theorem \ref{thm_main1} is necessary as well.
 	\par 
 	We consider the projective space $(X, L) := (\mathbb{P}^1, \mathscr{O}(1))$, and the filtration $\mathcal{F}$ associated the weight function $w_{\mathcal{F}_k}(s) := k \min\{ {\rm{ord}}_0 (s), 1 \}$, where ${\rm{ord}}_0 (s)$ is the order of vanishing of $s \in H^0(\mathbb{P}^1, \mathscr{O}(k))$ at the point $0 := [1, 0] \in \mathbb{P}^1$.
	A straightforward verification reveals that the filtration $\mathcal{F}$ is submultiplicative and bounded, but not finitely generated.
	\par 
	We identify $\mathbb{P}^1$ to $\mathbb{P}(V^*)$, where $V$ is a vector space generated by two elements: $x$ and $y$.
	Let us consider a metric $H$ on $V$, which makes $x$ and $y$ an orthonormal basis, and denote by $h^{FS}$ the induced Fubini-Study metric on $\mathscr{O}(1)$.
	For any $k \in \nat^*$, $i, j \in \nat$, $i + j = k$, under the isomorphism ${\rm{Sym}}^k(V) \to H^0(\mathbb{P}(V^*), \mathscr{O}(k))$, an easy calculation shows that we have
	\begin{equation}\label{eq_l2_norm_calc}
		\big\| x^i \cdot y^j \big\|_{{\rm{Hilb}}_k(h^{FS})}^2
		=
		\frac{i! j!}{(k + 1)!}.
	\end{equation}
	For the weight operator of $\mathcal{F}$, we then obtain the following formula 
	\begin{equation}
		A(\mathcal{F}_k, {\rm{Hilb}}_k(h^{FS})) (x^i \cdot y^j)
		=
		k (1 - \delta_{i, 0})
		x^i \cdot y^j,
	\end{equation}
	where the Kronecker symbol $\delta_{i, 0}$ is defined so that $\delta_{i, 0} = 1$ if $i = 1$, and $\delta_{i, 0} = 0$ otherwise. 
	\par 
	We then denote by $H^{\mathcal{F}}_{t, k}$ the geodesic ray departing from ${\rm{Hilb}}_k(h^{FS})$ and associated with $\mathcal{F}_k$.
	For any $a, b \in \comp$, not simultaneously equal to zero, we have
 	\begin{equation}\label{eq_expl_exmpl11}
 		\frac{FS({\rm{Hilb}}_{k}(h^{FS}))}{FS(H^{\mathcal{F}}_{t, k})} \Big([a x^* + b y^*] \Big)
 		=
 		\frac{e^{tk} (|a|^2 + |b|^2)^{k} + (1 - e^{tk}) |a|^{2k}}{(|a|^2 + |b|^2)^{k}}.
 	\end{equation}
 	In particular, for any $t \in [0, +\infty[$, we conclude that 
 	\begin{equation}
 		\lim_{k \to \infty} \bigg( \frac{FS({\rm{Hilb}}_{k}(h^{FS}))}{FS(H^{\mathcal{F}}_{t, k})} \bigg)^{\frac{1}{k}} \Big([a x^* + b y^*] \Big)
 		=
 		\begin{cases}
 			e^t, \quad &\text{if} \quad b \neq 0,
 			\\
 			1, \quad &\text{otherwise}.
 		\end{cases}
 	\end{equation}
 	Lower semicontinuous regularization will give $h^{\mathcal{F}}_t = e^{-t} h^{FS}$, which implies that $\phi(h^{FS}, \mathcal{F}) = 1$.
 	\par 
 	Let us calculate the weighted Bergman kernel for the function $g(x) = x$.
 	We see directly that 
 	\begin{equation}
 		B_k^{\mathcal{F}, g}([a x^* + b y^*]) 
 		=
 		\sum_{i = 1}^{k} \frac{(k + 1)!}{i! (k - i)!} \frac{|a|^{2i} |b|^{2k - 2i}}{(|a|^2 + |b|^2)^k} 
 		= 
 		(k + 1)
 		\frac{(|a|^2 + |b|^2)^k - |a|^{2k}}{(|a|^2 + |b|^2)^k}.
 	\end{equation}
 	In particular, we deduce that 
 	\begin{equation}
		\lim_{k \to \infty} \frac{1}{k}  B_k^{\mathcal{F}, g}\big([a x^* + b y^*] \big)
 		=
 		\begin{cases}
 			1, \quad &\text{if} \quad b \neq 0,
 			\\
 			0, \quad &\text{otherwise},
 		\end{cases}
 	\end{equation}
 	showing in particular that the convergence towards $\phi(h^{FS}, \mathcal{F})$ is neither uniform nor even pointwise, despite the regularity and simplicity of the associated geodesic ray, $h^{\mathcal{F}}_t$.

	\paragraph{Example 2.}
	The goal of the following example is to show that the uniform convergence in Theorem \ref{thm_berg_conv} cannot be improved to the $\mathscr{C}^1$-convergence.
	\par 
	We consider the projective space $(X, L) := (\mathbb{P}^1, \mathscr{O}(2))$, and the filtration $\mathcal{F}$ associated the weight function $w_{\mathcal{F}_k}(s) := \min\{ {\rm{ord}}_0 (s), k \}$ in the notations of the previous example.
	The reader will check that $\mathcal{F}$ is finitely generated.
	Similar calculation to the ones behind (\ref{eq_expl_exmpl11}) will reveal that for any $a, b \in \comp$, not simultaneously equal to zero, we have
 	\begin{multline}\label{eq_expl_exmpl112}
 		\frac{FS({\rm{Hilb}}_{2k}(h^{FS}))}{FS(H^{\mathcal{F}}_{t, k})} \Big([a x^* + b y^*] \Big)
 		=
 		\frac{1}{(|a|^2 + |b|^2)^{2k}}
 		\Big(
 		\sum_{i = 0}^{k} e^{ti} |a|^{2i} |b|^{2(2k - i)} \frac{(2k)!}{i! (2k - i)!}
 		\\
 		+
 		e^{tk}
 		\cdot
 		\sum_{i = k + 1}^{2k} |a|^{2i} |b|^{2(2k - i)} \frac{(2k)!}{i! (2k - i)!}
 		\Big).
 	\end{multline}
 	\par Cramér's theorem from large deviations theory applied for the binomial distribution yields that for any $x < 1$, we have
 	\begin{equation}\label{eq_cramer}
 		\lim_{k \to \infty} \frac{1}{2 k} \log \Big(
 		\sum_{i = k + 1}^{2k} x^i \frac{(2k)!}{i! (2k - i)!}
 		\Big)
 		=
 		\frac{1}{2} \log (4 x).
 	\end{equation}
 	From (\ref{eq_cramer}) and binomial formula, it is immediate to recover that for any $t \in [0, +\infty[$, we have
 	\begin{equation}
 		\frac{(h^{FS})^2}{h^{\mathcal{F}}_t} \Big([a x^* + b y^*] \Big)
 		=
 		\begin{cases}
 			\big(\frac{e^{t} |a|^2 + |b|^2}{|a|^2 + |b|^2} \big)^2,
 			&
 			e^{t / 2}|a| < |b|,
 			\\
 			\big(\frac{2 e^{t / 2} |a| |b|}{|a|^2 + |b|^2} \big)^2,
 			&
 			e^{- t / 2} |b| < |a| < |b|,
 			\\
 			e^{t},
 			&
 			|b| < |a|.
 		\end{cases}
 	\end{equation}
 	By differentiation, we obtain
 	\begin{equation}
 		\phi(h^{FS}, \mathcal{F}) \big([a x^* + b y^*] \big)
 		=
 		\begin{cases}
 			\frac{2 |a|^2}{|a|^2 + |b|^2},
 			&
 			|a| < |b|,
 			\\
 			1,
 			&
 			|b| < |a|.
 		\end{cases}
 	\end{equation}
 	One can easily see that $\phi(h^{FS}, \mathcal{F})$ is Lipshitz but not $\mathscr{C}^1$.
 	As for each $k \in \nat^*$, the function $x \mapsto \frac{1}{k^n} B_k^{\mathcal{F}, g}(x)$, $x \in X$, is clearly $\mathscr{C}^1$, the uniform convergence cannot be improved to the $\mathscr{C}^1$-convergence for the above filtration.
 	Our example here is of course related to the well-known phenomena that one can expect at most $\mathscr{C}^{1, 1}$-regularity for the geodesic rays, cf. \cite{BermanEnvProj}, \cite{ChenTangAster}, \cite{SongZeldToric}.

\section{Asymptotics of weight operators, proofs of Theorems \ref{thm_main1}, \ref{thm_main2}}\label{sect_pf}

The primary objective of this section is to demonstrate Theorems \ref{thm_main1} and \ref{thm_main2}, subject to an auxiliary result that will be established in the subsequent section of the article.
\par 
	We will fix a positive smooth metric $h^L_0$ on $L$, a continuous psh metric $h^L_1$ on $L$, and Kähler forms $\chi_i$, $i = 0, 1$, on $X$.
	We denote by $T_k(h^L_0, h^L_1) \in {\rm{End}}(H^0(X, L^{\otimes k}))$ the transfer map between ${\textrm{Hilb}}_k(h^L_0, \chi_0)$ and ${\textrm{Hilb}}_k(h^L_1, \chi_1)$.
	Let $h^L_t$, $t \in [0, 1]$, be the Mabuchi geodesic between $h^L_0$ and $h^L_1$.
	We denote $\phi(h^L_0, h^L_1) := - \dot{h}^L_0$.
	By Demailly's regularization theorem \cite{DemRegul}, cf. \cite{GuedZeriGeomAnal}, for any $\epsilon > 0$, we can find a positive Hermitian metric $h^L_{1, \epsilon}$ on $L$, verifying $h^L_1 \exp(-\epsilon) \leq h^L_{1, \epsilon} \leq h^L_1 \exp(\epsilon)$.
	\begin{prop}\label{prop_cont_der}
		As $\epsilon \to 0$, the sequence of functions $\phi(h^L_0, h^L_{1, \epsilon})$ converges to $\phi(h^L_0, h^L_1)$ uniformly.
		In particular, $\phi(h^L_0, h^L_1)$ is a continuous function.
	\end{prop}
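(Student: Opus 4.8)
The plan is to carry everything over to the space $\mathcal{H}_{\omega}$ of Kähler potentials via the correspondence (\ref{eq_pot_metr_corr}), and then combine the comparison principle for Mabuchi geodesics, Proposition \ref{prop_comp_geod}, with the elementary fact that shifting the endpoint of a weak geodesic by a constant shifts the whole geodesic affinely in $t$. Concretely: fix $h^L_0$ as base point, set $\omega := 2 \pi c_1(L, h^L_0)$ and identify $\mathcal{H}_L \cong \mathcal{H}_{\omega}$, so that $h^L_0$ corresponds to $u_0 = 0$, the continuous psh metric $h^L_1$ corresponds to a potential $u_1 \in {\rm{PSH}}(X, \omega) \cap L^{\infty}(X)$ which is continuous (since $h^L_1$ is), and each $h^L_{1, \epsilon}$ corresponds to some $u_{1, \epsilon} \in \mathcal{H}_{\omega}$. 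The hypothesis $h^L_1 \exp(-\epsilon) \leq h^L_{1, \epsilon} \leq h^L_1 \exp(\epsilon)$ translates, under (\ref{eq_pot_metr_corr}), into $u_1 - \epsilon \leq u_{1, \epsilon} \leq u_1 + \epsilon$ on $X$. Denoting by $u_t$ (resp. $u_{t, \epsilon}$) the weak Mabuchi geodesic from $u_0$ to $u_1$ (resp. to $u_{1, \epsilon}$), the convexity of $t \mapsto u_t(x)$ ensures that $\dot{u}_0$ and $\dot{u}_{0, \epsilon}$ are well-defined and bounded \cite{BernBrunnMink}, and from $h^L_t = e^{-u_t} h^L_0$ one reads off $\phi(h^L_0, h^L_1) = \dot{u}_0$ and $\phi(h^L_0, h^L_{1, \epsilon}) = \dot{u}_{0, \epsilon}$.

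The core of the argument is then the following. The function $-\epsilon \log |\tau|$ is pluriharmonic on the annulus $\mathbb{D}(e^{-1}, 1)$, so adding it to $\hat{u}$, in the notation (\ref{eq_defn_hat_u}), does not affect the homogeneous Monge--Amp\`ere equation (\ref{eq_ma_geod}) (indeed $\imun \partial \dbar \log|\tau| = 0$ there), while it changes the boundary values by $0$ on $|\tau| = 1$ and by $\epsilon$ on $|\tau| = e^{-1}$. By uniqueness of the bounded weak solution of this Dirichlet problem \cite{BernBrunnMink}, the geodesic from $u_0$ to $u_1 + \epsilon$ is $t \mapsto u_t + \epsilon t$, with derivative $\dot{u}_0 + \epsilon$ at $t = 0$, and likewise the geodesic from $u_0$ to $u_1 - \epsilon$ is $u_t - \epsilon t$. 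Since $u_1 - \epsilon$, $u_{1, \epsilon}$, $u_1 + \epsilon$ all lie in ${\rm{PSH}}(X, \omega) \cap L^{\infty}(X)$, Proposition \ref{prop_comp_geod} applied to the chain $u_1 - \epsilon \leq u_{1, \epsilon} \leq u_1 + \epsilon$ yields $\dot{u}_0 - \epsilon \leq \dot{u}_{0, \epsilon} \leq \dot{u}_0 + \epsilon$ pointwise on $X$, hence $\| \phi(h^L_0, h^L_{1, \epsilon}) - \phi(h^L_0, h^L_1) \|_{L^{\infty}(X)} \leq \epsilon$, which is the claimed uniform convergence.

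For the continuity statement, it suffices to know that each $\phi(h^L_0, h^L_{1, \epsilon})$ is continuous, since a uniform limit of continuous functions is continuous. But for fixed $\epsilon > 0$ both endpoints $h^L_0$ and $h^L_{1, \epsilon}$ are smooth positive metrics, so the corresponding Mabuchi geodesic is $\mathscr{C}^{1, 1}$ by Chen \cite{ChenGeodMab}, B{\l}ocki \cite{BlockiGeod} and Chu--Tosatti--Weinkove \cite{ChuTossVeinC11}; in particular $\dot{u}_{0, \epsilon} = \phi(h^L_0, h^L_{1, \epsilon})$ is continuous (indeed Lipschitz) on $X$, which finishes the proof.

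I expect the step requiring the most care to be the shift argument together with the uniqueness of weak geodesics, which is what legitimizes the sandwich $u_t - \epsilon t \leq u_{t, \epsilon} \leq u_t + \epsilon t$ and its differentiation in $t$ at the origin through Proposition \ref{prop_comp_geod}; the continuity assertion then rests only on the already cited $\mathscr{C}^{1,1}$-regularity for smooth boundary data. The remaining verifications — translating the metric inequalities into potential inequalities and checking that $u_1 \pm \epsilon$ and $u_{1, \epsilon}$ are admissible endpoints — are routine.
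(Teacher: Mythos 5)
Your proposal is correct and follows essentially the same route as the paper: the sandwich $\dot{u}_0 - \epsilon \leq \dot{u}_{0,\epsilon} \leq \dot{u}_0 + \epsilon$ via Proposition \ref{prop_comp_geod} (with the constant-shift/pluriharmonicity observation made explicit, which the paper leaves implicit), followed by continuity of $\phi(h^L_0, h^L_{1,\epsilon})$ for smooth positive endpoints. The only cosmetic difference is the reference for that last continuity step ($\mathscr{C}^{1,1}$-regularity of the geodesic rather than the paper's citation of \cite{PhongSturmRegul}), which is equally valid.
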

	\begin{proof}
		By Proposition \ref{prop_comp_geod}, we have
		\begin{equation}\label{eq_est_der_2_0}
			\phi(h^L_0, h^L_{1, \epsilon}) - \epsilon
			\leq
			\phi(h^L_0, h^L_1)
			\leq
			\phi(h^L_0, h^L_{1, \epsilon}) + \epsilon.
		\end{equation}
		The result follows directly from (\ref{eq_est_der_2_0}) since $\phi(h^L_0, h^L_{1, \epsilon})$ is continuous for any $\epsilon > 0$ by \cite{PhongSturmRegul}.
	\end{proof}
	\par 
	The following result, established in Section \ref{sect_transfer}, is among the main results of this paper.
	\begin{thm}\label{thm_trasnfer}
		The operators $\{ \frac{1}{k} T_k(h^L_0, h^L_1) \}_{k = 1}^{+ \infty}$, form a Toeplitz operator with symbol $\phi(h^L_0, h^L_1)$.
	\end{thm}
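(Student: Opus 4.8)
The plan is to reduce the statement about the transfer operator $T_k(h^L_0, h^L_1)$ between the two $L^2$-metrics to a statement comparing Bergman projections of the multiplication operator $M_{\phi, k}$, and to exploit the convexity/comparison machinery (Propositions \ref{thm_interpol}, \ref{prop_comp_geod}) together with Demailly's regularization. The natural first reduction is the following: since by Proposition \ref{prop_cont_der} the symbol $\phi(h^L_0, h^L_1)$ is continuous and $\phi(h^L_0, h^L_{1, \epsilon}) \to \phi(h^L_0, h^L_1)$ uniformly, and since the set of Toeplitz operators is closed in operator norm, it suffices to prove the theorem when $h^L_1$ is \emph{smooth and positive}, i.e. when both endpoints lie in $\mathcal{H}_L$. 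Indeed, one needs a monotonicity/stability statement saying that if $h^L_1 e^{-\epsilon} \le \tilde h^L_1 \le h^L_1 e^\epsilon$, then $\frac1k T_k(h^L_0, \tilde h^L_1)$ and $\frac1k T_k(h^L_0, h^L_1)$ differ by $O(\epsilon)$ in operator norm uniformly in $k$; this follows from Proposition \ref{thm_interpol} applied fiberwise to $H^0(X, L^{\otimes k})$ (the transfer maps are sandwiched between $T_k(h^L_0, h^L_1 e^{\mp \epsilon}) = T_k(h^L_0, h^L_1) \mp \epsilon k \cdot \mathrm{Id}$), so the rescaled operators are within $\epsilon$ of each other.

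So assume $h^L_1 \in \mathcal{H}_L$. Now the Mabuchi geodesic $h^L_t$ is $\mathscr{C}^{1,1}$ by Chen--Błocki--Chu-Tosatti-Weinkove, and $\phi = -\dot h^L_0$ is a bounded (in fact $\mathscr{C}^{1,1}$, hence Lipschitz, certainly continuous) function. The key analytic input is the asymptotic expansion of the transfer map. Writing $\langle \cdot, \cdot\rangle_{\mathrm{Hilb}_k(h^L_t,\chi_t)}$ for a suitable family of $L^2$ metrics interpolating the two (or just working with the two endpoints), I would compute
\[
\frac{d}{dt}\Big|_{t=0} \langle s_1, s_2\rangle_{\mathrm{Hilb}_k(h^L_t, \chi_t)} = \frac1{n!}\int_X \langle s_1, s_2\rangle_{h^{L^{\otimes k}}_0}\big({-k\,\dot h^L_0} + \dot\chi_0\text{-terms}\big)\, c_1(L,h^L_0)^n,
\]
which identifies the leading-order symbol of the infinitesimal transfer as $-k\phi + O(1)$. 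Comparing this with the definition of $T_{f,k} = B_k \circ M_{f,k}$, one sees that $\frac1k$ times the first-order term of $-\log$ of the metric ratio is exactly $T_{\phi, k}$ plus lower order. The point is then to pass from this \emph{infinitesimal} identity to the \emph{global} transfer map $\exp(-T_k(h^L_0, h^L_1))$: since the geodesic is $\mathscr{C}^{1,1}$, $\dot h^L_t$ is Lipschitz in $t$, and a standard integration argument (integrating $\frac{d}{dt}\mathrm{Hilb}_k(h^L_t)$ along $t \in [0,1]$, controlling the $\mathscr C^{1,1}$ norm) shows $T_k(h^L_0, h^L_1) = k\, T_{\phi, k} + o(k)$ in operator norm, using that $\| T_{f,k} - T_{g,k}\| \le \sup|f-g| + o(1)$ for continuous $f, g$ (a standard Toeplitz fact, cf. \cite[\S 7]{MaHol}) and that the Toeplitz calculus is multiplicative.

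The main obstacle, as I see it, is exactly this passage from infinitesimal to global: the transfer map is $\exp(-T_k)$, not $T_k$ itself, and along the geodesic the metrics $h^L_t$ are only $\mathscr{C}^{1,1}$, so one cannot naively Taylor-expand. I expect the resolution to go through a comparison with nearby \emph{smooth} geodesic-like paths (approximate geodesics, e.g. the $\epsilon$-geodesics of Chen, or Demailly regularizations of $h^L_t$ for intermediate $t$), for which one has genuine $\mathscr{C}^\infty$ control and the classical Bergman kernel expansion of Tian--Zelditch--Catlin applies uniformly; then let the approximation parameter go to zero using Proposition \ref{prop_comp_geod} to control the error. A secondary technical point is that the $p$-Schatten / operator-norm bookkeeping must be done carefully so that all error terms are $o(1)$ \emph{after} dividing by $k$ and \emph{uniformly} in $k$; here the boundedness of $\phi$ and the uniform-in-$k$ Bergman kernel estimates are what make it work. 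Once Theorem \ref{thm_trasnfer} is in hand, Theorems \ref{thm_main1} and \ref{thm_main2} follow by relating the weight operator $A(\mathcal{F}_k, \mathrm{Hilb}_k(h^L))$ to the transfer map $T_k$ between $\mathrm{Hilb}_k(h^L)$ and a suitable norm built from the geodesic ray $h^{\mathcal{F}}_t$ at time $t$, then letting $t \to 0$ — which is carried out in Section \ref{sect_pf}.
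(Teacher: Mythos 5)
Your opening reduction (Demailly regularization of $h^L_1$ plus the sandwich $T_k(h^L_0, h^L_{1,\epsilon}) \mp \epsilon k\,{\rm Id}$ from Proposition \ref{thm_interpol}) is exactly what the paper does, and your formula for $\frac{d}{dt}\big|_{t=0}{\rm Hilb}_k(h^L_t,\chi_t)$ is the paper's (\ref{eq_est_der_3}). But the core step is missing. The transfer map $T_k(h^L_0,h^L_1)$ is, by definition, (minus) the derivative at $t=0$ of the \emph{geodesic} of Hermitian norms joining ${\rm Hilb}_k(h^L_0,\chi_0)$ to ${\rm Hilb}_k(h^L_1,\chi_1)$; it is not the derivative of the path $t\mapsto {\rm Hilb}_k(h^L_t,\chi_t)$, and no ``standard integration argument'' along that path recovers it, because that path is not a geodesic in the space of norms. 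The mechanism the paper uses to bridge the two is the curvature--interpolation principle (Theorem \ref{thm_curv_geod}, Rochberg/Coifman--Semmes): a family of norms whose associated metric on the trivial bundle over the annulus has positive (resp.\ negative) curvature lies above (resp.\ below) the geodesic with the same endpoints, hence its derivative at $t=0$ bounds the transfer map from one side. The curvature of the $L^2$-family is identified by Ma--Zhang (Theorem \ref{thm_mazh}) as $k\,T_{\omega_H(\tau),k}+O(1)$, where $\omega_H$ is the fiberwise Monge--Amp\`ere density; along a smooth geodesic this density vanishes, so twisting by $\exp(\mp C_1 g)$ for a strictly subharmonic $g$ vanishing on the boundary produces both a super- and a sub-interpolating family and yields the two-sided $O(1)$ estimate of Theorem \ref{thm_trasnfer_sm}. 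Neither Theorem \ref{thm_curv_geod} nor the curvature of the direct image appears anywhere in your proposal; the on-diagonal Tian--Zelditch--Catlin expansion you cite is not the relevant input.

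Your closing paragraph correctly senses that $\epsilon$-geodesics are needed, but the difficulty there is not the one you name. After regularizing, the Monge--Amp\`ere density of the $\epsilon$-geodesic is $\epsilon\, d_\epsilon(\tau)$, and the paper emphasizes that one does \emph{not} know $d_\epsilon(\tau)$ stays bounded as $\epsilon\to 0$ (no non-collapsing estimate), so the negative-curvature (subinterpolating) family can only be constructed on a short interval $[0,t_\epsilon]$ near the endpoint. Closing the argument then requires an extra global input: Theorem \ref{thm_equiv_geod}, comparing the quantized geodesic $H_{k,t}$ with ${\rm Hilb}_k(h^L_t,\chi_t)$ up to $\exp(\pm a_k)$ with $a_k/k\to 0$, which is used to transport the endpoint condition from $t=1$ to $t=t_\epsilon$. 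Proposition \ref{prop_comp_geod}, which you propose to use to ``control the error,'' compares Mabuchi geodesics with ordered endpoints and cannot substitute for this quantization statement. So the proposal has the right skeleton at the reduction stage but a genuine gap at the heart of the proof.
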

	\par 
	We establish Theorem \ref{thm_main1} by using Theorem \ref{thm_trasnfer}.
	In order to make a connection between the transfer operator and the weight operator, we need to relate the associated rays of norms on $R(X, L)$.  
	Let $\mathcal{F}$ be a finitely generated submultiplicative filtration on $R(X, L)$.
	We define $h^{\mathcal{F}}_t$, $t \in [0, +\infty[$, and $H^{\mathcal{F}}_{t, k}$, $t \in [0, +\infty[$, $k \in \nat$, as in (\ref{eq_geod_ray_filt}).
	We also fix an arbitrary Kähler form $\chi$ on $X$.
	The following result is at the heart of our approach.
	\begin{thm}[{\cite[Theorem 4.1]{FinTits}}]\label{thm_comp_geod}
		There are $C > 0$, $k_0 \in \nat^*$, such that for any $t \in [0, +\infty[$, $k \geq k_0$, we have the following comparison of norms
		\begin{equation}\label{eq_thm_2_step1}
			\exp(-C(t + k))
			\cdot
			{\rm{Hilb}}_k(h^{\mathcal{F}}_t, \chi)
			\leq
			H^{\mathcal{F}}_{t, k}
			\leq
			\exp(C(t + k))
			\cdot
			{\rm{Hilb}}_k(h^{\mathcal{F}}_t, \chi).
		\end{equation}
	\end{thm}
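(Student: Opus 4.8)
The plan is to realise both sides of (\ref{eq_thm_2_step1}) as two Hermitian metrics on one and the same holomorphic bundle — the direct image of a power of the polarization on a test configuration — and to compare them via a one-variable convexity (subharmonicity in $\tau$) argument.

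Since $\mathcal{F}$ is finitely generated, after restricting to $R(X,L^{\otimes d})$ and rescaling $t$ (which only affects the constants) we may assume $\mathcal{F}=\mathcal{F}^{\mathcal{T}}$ for a test configuration $\mathcal{T}=(\pi:\mathcal{X}\to\comp,\mathcal{L})$. Pass to a resolution $\mathcal{T}'$ and restrict to $\mathbb{D}$: Phong–Sturm's geodesic metric $h^{\mathcal{L}'}_{\mathbb{D}}$ on $\mathcal{L}'_{\mathbb{D}}$ solves (\ref{eq_ma_geod_dir}), is bounded and rotation-invariant, restricts to the rotation-invariant lift of $h^L_0$ over $\partial\mathcal{X}'_{\mathbb{D}}$, and restricts on the fibre $X_\tau\cong X$ (via (\ref{eq_can_ident_test})) to $h^{\mathcal{F}}_{-\log|\tau|}$. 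Let $E_k:=\pi'_*(\mathcal{L}'^{\otimes k}_{\mathbb{D}})$; for $k\ge k_0$ it is locally free, over $\mathbb{D}^{*}$ canonically $\cong\mathbb{D}^{*}\times H^0(X,L^{\otimes k})$, and carries the fibrewise $L^2$-metric $G^{L^2}_k$ induced by $h^{\mathcal{L}'}_{\mathbb{D}}$ (with a reference Kähler form on the fibres) and the metric $G^{\mathcal{F}}_k$ whose restriction to $\{|\tau|=e^{-t}\}$ is $H^{\mathcal{F}}_{t,k}$. Comparing $G^{L^2}_k$ with ${\rm Hilb}_k(h^{\mathcal{F}}_t,\chi)$ only involves comparing volume forms — a $k$-independent distortion — so it suffices to show $e^{-C(t+k)}G^{L^2}_k\le G^{\mathcal{F}}_k\le e^{C(t+k)}G^{L^2}_k$ over $0<|\tau|\le1$.

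Fix a weight vector $v$ of $\mathcal{F}_k$ (an eigenvector of $A(\mathcal{F}_k,{\rm Hilb}_k(h^L_0))$), view it as the constant section of $E_k|_{\mathbb{D}^{*}}$, and set $u_v(\tau):=\log\|v\|^2_{G^{L^2}_k,\tau}-\log\|v\|^2_{G^{\mathcal{F}}_k,\tau}$. On a weight line $G^{\mathcal{F}}_k$ is, up to a constant, the flat singular metric $|\tau|^{w_{\mathcal{F}}(v)}$ — this is precisely the fact that a geodesic ray in the space of norms is a flat metric — so $\log\|v\|^2_{G^{\mathcal{F}}_k}$ is harmonic on $\mathbb{D}^{*}$; and by Berndtsson's positivity of direct images applied to the (bounded) psh metric $(h^{\mathcal{L}'}_{\mathbb{D}})^k$, $\log\|v\|^2_{G^{L^2}_k}$ is psh there. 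Hence $u_v$ is subharmonic, and by rotation invariance of $h^{\mathcal{L}'}_{\mathbb{D}}$ it depends only on $|\tau|$, so $t\mapsto u_v(e^{-t})$ is convex on $[0,\infty)$. At $t=0$ it is $O(1)$ ($G^{\mathcal{F}}_k={\rm Hilb}_k(h^L_0)$ there and $G^{L^2}_k$ differs only by the volume form); as $\tau\to0$, multiplying $v$ by the power of $\tau$ dictated by $w_{\mathcal{F}}(v)$ extends it across the central fibre to its image $\bar v$ in the associated graded module $\cong H^0(X'_0,\mathcal{L}'^{k}|_{X'_0})$, and since $h^{\mathcal{L}'}_{\mathbb{D}}$ is bounded, $u_v(e^{-t})\to\log\|\bar v\|^2_{L^2(X'_0)}-\log\|v\|^2_{{\rm Hilb}_k(h^L_0)}=:u_v(\infty)$, bounded in modulus by $Ck$ (the associated-graded identification compares the central-fibre $L^2$-norm with ${\rm Hilb}_k(h^L_0)$ up to the $e^{Ck}$ distortion typical of a degeneration of a polarized manifold). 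A convex function on $[0,\infty)$ with a finite limit at $+\infty$ is non-increasing, whence $-Ck\le u_v(e^{-t})\le O(1)$ for all $t$: \emph{on every weight vector}, $G^{L^2}_k$ and $G^{\mathcal{F}}_k$ are comparable up to $e^{Ck}$, uniformly in $t$.

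It remains to upgrade this to an operator comparison. One direction is immediate: decomposing $v=\sum_\lambda v_\lambda$ into its ${\rm Hilb}_k(h^L_0)$-orthogonal weight components — which are also $G^{\mathcal{F}}_k$-orthogonal — the triangle inequality gives $\|v\|^2_{G^{L^2}_k}\le N_k\sum_\lambda\|v_\lambda\|^2_{G^{L^2}_k}\le e^{Ck}\sum_\lambda\|v_\lambda\|^2_{G^{\mathcal{F}}_k}=e^{Ck}\|v\|^2_{G^{\mathcal{F}}_k}$, i.e. $G^{L^2}_k\le e^{Ck}G^{\mathcal{F}}_k$. The reverse inequality $G^{\mathcal{F}}_k\le e^{C(t+k)}G^{L^2}_k$ — equivalently, that $G^{L^2}_k$ is not much more degenerate than its diagonal in the weight basis — is the \textbf{main obstacle}, and it genuinely requires more than Berndtsson semipositivity (for a general bounded, not finitely generated, submultiplicative filtration this reverse bound fails, cf. Example 1, even when the geodesic ray is smooth). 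The extra input is that $h^{\mathcal{L}'}_{\mathbb{D}}$ solves the homogeneous equation $c_1^{\,n+1}=0$, which forces $G^{L^2}_k$ to be not merely positively curved but \emph{almost flat} along the ray: the off-diagonal Gram entries $\langle v_i,v_j\rangle_{G^{L^2}_k}$, $i\ne j$, are small at $t=0$ (only the volume-form distortion) and again in the limit $\tau\to0$ (the central fibre carries a $\comp^{*}$-action and $h^{\mathcal{L}'}_{0}$ is $S^1$-invariant, so the associated-graded $L^2$-product is block-diagonal in the weights), and convexity propagates this to the interior — so the quantized ray $t\mapsto G^{L^2}_k|_{\{|\tau|=e^{-t}\}}$ stays within $e^{O(k)}$ of the geodesic ray $H^{\mathcal{F}}_{t,k}$ as operators. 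Equivalently, writing $B_{k,w}$ for the Bergman function of the weight-$w$ piece of $\mathcal{F}_k$, one has $B^{\mathcal{F},g}_k=\sum_w e^{wt}B_{k,w}$ for $g(x)=x$, and the required estimate is the large-deviations asymptotic $\tfrac1k\log(\sum_w e^{wt}B_{k,w}(x))=k\,\psi_t(x)+O(t+k)$ (with $\psi_t$ the geodesic potential of $h^{\mathcal{F}}_t$), whose rate function is the Legendre dual of the geodesic and which is controllable in the finitely generated case from the geometry of the central fibre of $\mathcal{T}'$ — this is also exactly a convergence-rate statement for the construction (\ref{eq_geod_ray_filt}). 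The remaining technical points to handle carefully are Berndtsson's theorem for the merely bounded psh metric $h^{\mathcal{L}'}_{\mathbb{D}}$ over a family with possibly singular central fibre, and pinning down the $\tau\to0$ asymptotics of the fibrewise $L^2$-norms — i.e. that $G^{\mathcal{F}}_k$ really is the model of $G^{L^2}_k$ at infinity, with matching powers of $|\tau|$.
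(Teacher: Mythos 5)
This theorem is not proved in the present paper: it is imported from \cite[Theorem 4.1]{FinTits}, and the only in-paper content is the remark reducing general finitely generated filtrations to the test-configuration case (a reduction you also perform at the start). So your proposal has to be measured against that external proof, which, as the remark following the theorem indicates, rests on quantitative $L^2$ inputs — the identification of $H^{\mathcal{F}}_{t,k}$ with $L^2$-type norms on the total space of the test configuration, Ohsawa--Takegoshi-type extension with uniform constants, the submean-value inequality, and the asymptotic isometry of the multiplication maps $H^0(X,L^{\otimes k})\otimes H^0(X,L^{\otimes l})\to H^0(X,L^{\otimes(k+l)})$ — rather than on soft convexity.

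Your sketch has two genuine gaps. First, a sign error in the one step you do carry out: Griffiths semipositivity of the direct image (Berndtsson, or Theorem \ref{thm_mazh}) makes $\log\|\xi\|^2$ psh for holomorphic sections $\xi$ of the \emph{dual} bundle; for a holomorphic section $v$ of $E_k$ itself, $\log\|v\|^2_{G^{L^2}_k}$ is psh only under semi\emph{negative} curvature. Hence the subharmonicity of $u_v$, and with it the convexity argument that produces the weight-vector comparison, does not follow from the positivity you invoke — and once the sign is corrected, the inequality you obtain cheaply and the one you flag as the ``main obstacle'' essentially trade places. (The correct way to convert a curvature sign into a norm comparison is Theorem \ref{thm_curv_geod}, applied to the whole metric against the geodesic, not section by section.) Second, and decisively, the direction you yourself call the main obstacle is never proved: ``almost flatness,'' ``the off-diagonal Gram entries are small and convexity propagates this to the interior,'' and the ``large-deviations asymptotic controllable from the central fibre'' are headings for a proof, not a proof — and, as your own appeal to Example 1 shows, this direction is precisely the quantitative content of the theorem that cannot be extracted from positivity and convexity alone. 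The boundary behaviour as $\tau\to 0$ (the existence and the $e^{Ck}$ bound for $u_v(\infty)$ on a possibly singular central fibre with a merely bounded metric) is likewise asserted rather than established, and it is itself of the same order of difficulty as the statement being proved. As it stands, the proposal establishes neither inequality in (\ref{eq_thm_2_step1}).
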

	\begin{rem}
		In \cite[Theorem 4.1]{FinTits}, we only established Theorem \ref{thm_comp_geod} for filtrations $\mathcal{F}$ associated with a test configuration.
		Due to the correspondence between test configurations and finitely generated submultiplicative filtrations, recalled in Section \ref{sect_prel}, it implies that there is $d \in \nat^*$, such that (\ref{eq_thm_2_step1}) holds for any $k$ divisible by $d$.
		However, the same argument as in \cite[proof of Theorem 4.1]{FinTits}, based on the isometry properties of the surjective map $H^0(X, L^{\otimes k}) \otimes H^0(X, L^{\otimes l}) \to H^0(X, L^{\otimes (k + l)})$ then gives (\ref{eq_thm_2_step1}) for all $k \in \nat$, see more specifically \cite[(4.12), (4.14) and (4.18)]{FinTits}.
	\end{rem}
	We can now present the proof of the main result of this paper.
	\begin{proof}[Proof of Theorem \ref{thm_main1}]
		We conserve the notations from Theorems \ref{thm_main1} and \ref{thm_comp_geod}.
		By Theorem \ref{thm_comp_geod} and Proposition \ref{thm_interpol}, for any $k \geq k_0$, $t \in ]0, +\infty[$, we have the following relation between the weight operator and the transfer operators
		\begin{equation}\label{eq_comp_weight_transfer}
			 \Big\|
				A(\mathcal{F}_k, {\textrm{Hilb}}_k(h^L))
				-
				\frac{1}{t} 
				T_k(h^{\mathcal{F}}_0, h^{\mathcal{F}}_t)
			\Big\|
			\leq
			C \frac{k}{t} + C.
		\end{equation}
		Theorem \ref{thm_main1} follows directly from Theorem \ref{thm_trasnfer} and the trivial fact $\phi (h^{\mathcal{F}}_0, h^{\mathcal{F}}_t) = t \phi (h^{\mathcal{F}}_0, h^{\mathcal{F}}_1)$.
	\end{proof}
	\par 
	We will now deduce Theorem \ref{thm_main2} from Theorem \ref{thm_main1} and approximation techniques for submultiplicative filtrations from \cite{SzekeTestConf}, \cite{BerBouckJonYTD}, \cite{BouckJohn21}, \cite{FinSecRing}.
	For this, we will need an auxillary statement, which will be established in Section \ref{sect_func_calcul}.
	\begin{prop}\label{prop_conv_toepl_symb}
		For an arbitrary sequence $f_k \in L^{\infty}(X)$, $k \in \nat$, of uniformly bounded functions converging, as $k \to \infty$, in $L^1(X)$ towards $f \in L^{\infty}(X)$, the sequence of operators $\{ T_{f_k, k} \}_{k = 0}^{+\infty}$ forms a Toeplitz operator of Schatten class with symbol $f$.
	\end{prop}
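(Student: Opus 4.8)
The plan is to reduce the statement to a quantitative estimate on the $p$-Schatten distance between $T_{f_k,k}$ and $T_{f,k}$, and to control this distance by the $L^1$-norm $\|f_k - f\|_{L^1(X)}$ together with uniform bounds coming from the usual Bergman kernel asymptotics. First I would observe that $T_{f_k,k} - T_{f,k} = B_k \circ M_{f_k - f, k} = T_{g_k, k}$ with $g_k := f_k - f$, so it suffices to show that for a uniformly bounded sequence $g_k \in L^\infty(X)$ with $\|g_k\|_{L^1(X)} \to 0$, one has $\|T_{g_k, k}\|_p \to 0$ for every fixed $p \in [1,+\infty[$, while $\|T_{g_k,k}\|$ (operator norm) stays uniformly bounded — the latter being immediate from $\|T_{g,k}\| \le \sup|g|$ and the contraction property of $B_k$. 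Since the $p$-Schatten norm is monotone in $p$ on a fixed finite-dimensional space up to the dimensional normalization built into Definition \ref{defn_toepl_sch}, and since for each fixed $p$ we can compare $\|\cdot\|_p$ with $\|\cdot\|_{p'}$ for some larger even integer $p'$, it is enough to bound $\frac{1}{N_k}{\rm Tr}\big[(T_{g_k,k} T_{g_k,k}^*)^{m}\big]$ for a fixed integer $m$, or even just the case $p = 2$ combined with the uniform operator bound via interpolation: $\|A\|_p \le \|A\|_2^{2/p} \|A\|_\infty^{1-2/p}$ for $p \ge 2$, and for $p \in [1,2]$ use $\|A\|_p \le \|A\|_2$ after the normalization, so the whole matter reduces to the Hilbert--Schmidt estimate.

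Next I would compute the Hilbert--Schmidt norm explicitly in terms of the Bergman kernel. Writing $g := g_k$, one has
\begin{equation}
	\frac{1}{N_k} {\rm{Tr}}\big[ T_{g,k} T_{g,k}^* \big]
	=
	\frac{1}{N_k} \frac{1}{(n!)^2} \int_X \int_X |g(x)|\, |g(y)|\, |B_k(x,y)|^2_{h^{L^{\otimes k}}}\, c_1(L,h^L_0)^n(x)\, c_1(L,h^L_0)^n(y),
\end{equation}
using the reproducing property $B_k \circ B_k = B_k$ and the definition $B_k(x,y) = \sum_i s_{i,k}(x) \otimes s_{i,k}(y)^*$ from the Notation section; here I have been slightly cavalier and should really write $\overline{g(y)}$, but $g$ is real, and I also absorb the factor into the identity $T_{g,k}^* = T_{\bar g, k}$ after projecting, so more precisely $T_{g,k}T^*_{g,k}$ has kernel involving $g(x)g(y)$ integrated against $|B_k(x,y)|^2$. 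Now I use the two standard facts about $B_k$: the on-diagonal asymptotics $\frac{1}{k^n} B_k(x) \to 1$ uniformly (Tian, (\ref{eq_thm_tian})), which after Cauchy--Schwarz gives $|B_k(x,y)|^2 \le B_k(x) B_k(y) \le C k^{2n}$; and the fact that $\frac{1}{k^n}\int_X |B_k(x,y)|^2\, c_1(L,h^L_0)^n(y) = \frac{1}{k^n} B_k(x) \le C$ uniformly in $x$ and $k$, which is just the reproducing identity again. Combining, $|B_k(x,y)|^2/k^{2n}$ is a family of non-negative kernels with uniformly bounded mass in $y$ (after the $k^{-n}$ normalization) and uniformly bounded sup-norm; since $N_k \sim k^n/n! \cdot \int c_1(L)^n$ by Riemann--Roch, we get
\begin{equation}
	\frac{1}{N_k}{\rm{Tr}}\big[ T_{g,k} T_{g,k}^* \big]
	\le
	C \int_X |g(x)| \Big( \frac{1}{k^n} \int_X |g(y)|\, |B_k(x,y)|^2_{h^{L^{\otimes k}}}\, c_1(L,h^L_0)^n(y) \Big) c_1(L,h^L_0)^n(x)
	\le
	C' \|g\|_{L^\infty(X)} \|g\|_{L^1(X)},
\end{equation}
where in the last step I bounded one factor $|g(y)|$ by $\|g\|_\infty$ and used the uniform mass bound, then recognized the remaining integral as $\|g\|_{L^1}$ up to constants. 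Since $\|g_k\|_\infty$ is uniformly bounded and $\|g_k\|_{L^1} \to 0$, this gives $\|T_{g_k,k}\|_2 \to 0$, hence $\|T_{g_k,k}\|_p \to 0$ for all $p \in [1,+\infty[$ by the interpolation inequality above, and combined with the uniform operator-norm bound this is exactly the assertion that $\{T_{f_k,k}\}$ is a Toeplitz operator of Schatten class with symbol $f$.

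The main obstacle, and the only place where genuine care is needed, is the claim that $\frac{1}{k^n}\int_X |B_k(x,y)|^2_{h^{L^{\otimes k}}}\, c_1(L,h^L_0)^n(y)$ is bounded uniformly in $x$ and $k$ — i.e. that the normalized off-diagonal Bergman kernel has uniformly bounded total mass. This is in fact an exact identity, not an estimate: $\int_X |B_k(x,y)|^2 c_1(L,h^L_0)^n(y) / n! = B_k(x)$ because $B_k$ is an orthogonal projection (so $B_k^2 = B_k$ and $B_k$ is its own adjoint, whence its integral kernel satisfies $\int B_k(x,y)\overline{B_k(x,y)} = B_k(x,x)$), and then Tian's theorem (\ref{eq_thm_tian}) gives $\frac{1}{k^n}B_k(x) \le C$. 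So strictly speaking there is no obstacle at all beyond bookkeeping the normalization constants from Definition \ref{defn_toepl_sch} and from Riemann--Roch; the one subtlety to flag is that the inequality $\|A\|_p \le \|A\|_2^{2/p}\|A\|_\infty^{1-2/p}$ requires $p \ge 2$, so for $p \in [1,2)$ I instead use that with the dimensional normalization $\|A\|_p \le \|A\|_2$ directly (by Jensen applied to the normalized trace), which covers the remaining range. I would present the argument in the order: reduction to $g_k = f_k - f$ and to the Hilbert--Schmidt estimate; the explicit kernel formula for $\|T_{g,k}\|_2^2$; the reproducing identity and Tian's bound; the final $L^\infty \cdot L^1$ estimate; and the Schatten interpolation to conclude.
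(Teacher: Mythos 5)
Your proposal is correct and follows essentially the same strategy as the paper: reduce to the operator $T_{f_k-f,k}$, control a single low Schatten norm of it using the reproducing identity $\int |B_k(x,y)|^2\,d\nu(y)\cdot\tfrac{\int c_1(L)^n}{n!}=B_k(x)$ together with Tian's theorem (\ref{eq_thm_tian}), and then interpolate against the uniform operator-norm bound $\|T_{g,k}\|\leq \operatorname{esssup}|g|$ to reach all $p\in[1,+\infty[$. The only difference is the anchor point of the interpolation: the paper uses the trace norm, proving $\|T_{g,k}\|_1\leq(1+\epsilon)\int|g|\,d\nu$ (Lemma \ref{lem_l1_bnd}, via a positive/negative decomposition of $g$) and then applying $\|A\|_p\leq\|A\|_1^{1/p}\|A\|^{1-1/p}$ from (\ref{eq_a_k_bnd_p1}), whereas you anchor at the Hilbert--Schmidt norm with the bound $\|T_{g,k}\|_2^2\leq C\|g\|_{L^\infty}\|g\|_{L^1}$; both are valid, and the paper's $p=1$ variant has the cosmetic advantage of not consuming the $L^\infty$ bound at the anchor step.
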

	\par
	\begin{proof}[Proof of Theorem \ref{thm_main2}]
	First of all, without loosing the generality we can assume that the filtration $\mathcal{F}$ has integer weights.
	In order to see this, for an arbitrary filtration $\mathcal{F}$, we define the filtration $\lfloor \mathcal{F} \rfloor$ through its weight function as $w_{\lfloor \mathcal{F} \rfloor}(s) := \lfloor w_{ \mathcal{F} }(s) \rfloor$, $s \in R(X, L)$. It is easy to see that $\lfloor \mathcal{F} \rfloor$ is submultiplicative and bounded as well.
	Moreover, by Theorem \ref{thm_interpol}, we have 
	\begin{equation}
		A(\mathcal{F}_k, {\textrm{Hilb}}_k(h^L)) 
		\geq
		A(\lfloor \mathcal{F}_k \rfloor, {\textrm{Hilb}}_k(h^L)) 
		\geq
		A(\mathcal{F}_k, {\textrm{Hilb}}_k(h^L)) - {\rm{Id}}.
	\end{equation}
	Which shows that it suffices to establish Theorem \ref{thm_main2} for $\lfloor \mathcal{F} \rfloor$. 
	Without loosing the generality, we assume from now on that the filtration $\mathcal{F}$ has integer weights.
	\par 
	Now, as $R(X, L)$ is finitely generated, there is $k_0 \in \nat$, such that $\oplus_{i = 0}^{k_0} H^0(X, L^{\otimes i})$ generates $R(X, L)$ as a ring. 
	For any $k \geq k_0$, we denote by $\mathcal{F}^{[k]}$ the filtration on $R(X, L)$, generated by the restriction of $\mathcal{F}$ to $\oplus_{i = 0}^{k_0} H^0(X, L^{\otimes i})$.
	Clearly, $\mathcal{F}^{[k]}$ has integer weights and is finitely generated.
	By Theorem \ref{thm_main1}, for any $\epsilon > 0$, $k \geq k_0$, there is $l_0 \in \nat$, such that for any $l \geq l_0$, we have
	\begin{equation}\label{eq_weight_approx_beha}
		\Big\| 
			\frac{1}{l} A( \mathcal{F}^{[k]}_l, {\textrm{Hilb}}_l(h^L))
			-
			T_{\phi(h^L, \mathcal{F}^{[k]}), l} 
		\Big\|
		\leq
		\epsilon.
	\end{equation}
	\par 
	Our proof resides on establishing the following two statements. 
	First, we establish that 
	\begin{equation}\label{eq_conv_deriv1}
		\phi(h^L, \mathcal{F}^{[k]}), k \in \nat \text{ increases to } \phi(h^L, \mathcal{F}) \text{ almost everywhere}.
	\end{equation}
	Second, we establish that for any $\epsilon > 0$, $p \in [1, +\infty[$, there is $k_1 \geq k_0$, such that for any $k \geq k_1$, there is $l_1 \in \nat$, so that for any $l \geq l_0$, we have
	\begin{equation}\label{eq_weight_oper_appr}
		\Big\| 
			A( \mathcal{F}^{[k]}_l, {\textrm{Hilb}}_l(h^L))
			-
			A( \mathcal{F}_l, {\textrm{Hilb}}_l(h^L))
		\Big\|_p
		\leq
		\epsilon l.
	\end{equation}
	Remark that Theorem \ref{thm_main2} follows directly from Proposition \ref{prop_conv_toepl_symb}, (\ref{eq_weight_approx_beha}), (\ref{eq_conv_deriv1}) and (\ref{eq_weight_oper_appr}) by Lebesgue dominated convergence theorem.
	From now on, we concentrate on the proofs of (\ref{eq_conv_deriv1}) and (\ref{eq_weight_oper_appr}).
	\par 
	\begin{sloppypar}
	Remark first that as an immediate consequence of submultiplicativity of $\mathcal{F}$, the weight functions of the filtrations are related for any $k \geq k_0$, $l \in \nat$, as follows
	\begin{equation}\label{eq_weight_monotone}
		w_{\mathcal{F}^{[k]}_l} \leq w_{\mathcal{F}^{[k + 1]}_l} \leq w_{\mathcal{F}_l}.
	\end{equation}
	By Theorem \ref{thm_interpol} and (\ref{eq_weight_monotone}), we deduce that for any $k \geq k_0$, $l \in \nat^*$, we have
	\begin{equation}\label{eq_monotone_ak}
		A( \mathcal{F}^{[k]}_l, {\textrm{Hilb}}_l(h^L))
		\leq
		A( \mathcal{F}^{[k + 1]}_l, {\textrm{Hilb}}_l(h^L))
		\leq
		A( \mathcal{F}_l, {\textrm{Hilb}}_l(h^L)),
	\end{equation}
	By Theorem \ref{thm_interpol}, (\ref{eq_geod_ray_filt}) and (\ref{eq_weight_monotone}), we deduce that in the notations of (\ref{eq_geod_ray_filt}), for any $k \in \nat$, $t \in [0, +\infty[$, we have
	\begin{equation}\label{eq_incr_phi00}
		h^{\mathcal{F}^{[k]}}_t \geq h^{\mathcal{F}^{[k + 1]}}_t \geq h^{\mathcal{F}}_t.
	\end{equation}
	By taking derivatives at $t = 0$ in (\ref{eq_incr_phi00}), for any $k \in \nat$, we establish 
	\begin{equation}\label{eq_incr_phi}
		\phi(h^L, \mathcal{F}^{[k]}) \leq \phi(h^L, \mathcal{F}^{[k + 1]}) \leq \phi(h^L, \mathcal{F})
	\end{equation}
	\par 
	Let $k_2 \in \nat$ is defined so that for any $l_1, l_2 \geq k_2$, the multiplication maps $H^0(X, L^{\otimes l_1}) \otimes H^0(X, L^{\otimes l_2}) \to H^0(X, L^{\otimes (l_1 + l_2)})$ is surjective.
	Recall, cf. \cite[Proposition 3.2.6]{ChenHNolyg}, that for an arbitrary submultiplicative filtration $\mathcal{F}$, $l_1, l_2 \geq k_2$, we have 
	\begin{equation}
		\min_{s \in H^0(X, L^{\otimes l_1})} w_{\mathcal{F}_{l_1}}(s)
		+
		\min_{s \in H^0(X, L^{\otimes l_2})} w_{\mathcal{F}_{l_2}}(s)
		\leq
		\min_{s \in H^0(X, L^{\otimes (l_1 + l_2)})} w_{\mathcal{F}_{l_1 + l_2}}(s).
	\end{equation}
	From this and (\ref{eq_weight_monotone}), we see in particular that the following bound holds
	\begin{equation}\label{eq_bound_max_vol_appr}
		\sup_{k \geq \max( k_0, k_2)}  \sup_{l \in \nat^*} \frac{1}{l} \max_{s \in H^0(X, L^{\otimes l}) \setminus \{0\}} |w_{\mathcal{F}^{[k]}_l}(s)|
		\leq
		\sup_{l \in \nat^*} \frac{1}{l} \max_{s \in H^0(X, L^{\otimes l}) \setminus \{0\}} |w_{\mathcal{F}_l}(s)|.
	\end{equation}
	\end{sloppypar}
	\par 
	\begin{sloppypar}
	From \cite[Lemma 2.4]{FinTits}, using the boundness of $\mathcal{F}$, for $C := \sup_{l \in \nat^*} \frac{1}{l} \max |w_{\mathcal{F}_l}|$, we have $\phi(h^L, \mathcal{F}) \leq C$.
	From this and (\ref{eq_incr_phi}), we see that $\phi(h^L, \mathcal{F}^{[k]})$ is uniformly bounded in $k \in \nat$.
	Due to this and (\ref{eq_incr_phi}), in order to get (\ref{eq_conv_deriv1}), it suffices to establish that $\phi(h^L, \mathcal{F}^{[k]})$ converges to $\phi(h^L, \mathcal{F})$ in $L^1(X)$, as $k \to \infty$.
	\end{sloppypar}
	\par 
	Recall that Darvas-Lu in \cite[Theorem 3.1]{DarLuGeod} established the following bound
	\begin{equation}
		\frac{1}{n! \int c_1(L)^n} \int \big| \phi(h^L, \mathcal{F}^{[k]}) - \phi(h^L, \mathcal{F}) \big| c_1(L, h^L)^n 
		\leq
		d_1(h^{\mathcal{F}^{[k]}}_1, h^{\mathcal{F}}_1),
	\end{equation}
	where $d_1$ is the Darvas distance, \cite{DarvasMabCompl}.
	In particular, to show the convergence of $\phi(h^L, \mathcal{F}^{[k]})$ to $\phi(h^L, \mathcal{F})$ in $L^1(X)$, as $k \to \infty$, it only suffices to show that $d_1(h^{\mathcal{F}^{[k]}}_1, h^{\mathcal{F}}_1) \to 0$, as $k \to \infty$.
	This was done in \cite[Theorem 5.9]{FinSecRing} for a similar approximation scheme, where instead of the filtration $\mathcal{F}^{[k]}$, we considered the filtration $\mathcal{F}^{(k)}$ on $R(X, L^{\otimes k})$ induced by $\mathcal{F}_k$. 
	Let us explain why this implies the searched convergence. 
	Indeed, as an immediate consequence of submultiplicativity of $\mathcal{F}$, similarly to (\ref{eq_weight_monotone}), the weight functions of the filtrations are related for any $k \geq k_0$, $l \in \nat$, as
	\begin{equation}\label{eq_weight_monotone121}
		w_{\mathcal{F}^{(k)}_l} \leq w_{\mathcal{F}^{[k]}_{kl}} \leq w_{\mathcal{F}_{kl}}.
	\end{equation}
	We denote by $h^{\mathcal{F}^{(k)}}_t$, $t \in [0, +\infty[$, the geodesic ray on $L^{\otimes k}$ emanating from $(h^L)^{\otimes k}$ and induced by $\mathcal{F}^{(k)}$.
	From (\ref{eq_weight_monotone121}), we deduce in the same way as in (\ref{eq_incr_phi00}) that
	\begin{equation}\label{eq_incr_phi00121}
		(h^{\mathcal{F}^{(k)}}_t)^{\frac{1}{k}} \geq h^{\mathcal{F}^{[k]}}_t \geq h^{\mathcal{F}}_t.
	\end{equation}
	The statement \cite[Theorem 5.9]{FinSecRing} says that $d_1((h^{\mathcal{F}^{(k)}}_1)^{\frac{1}{k}}, h^{\mathcal{F}}_1) \to 0$, as $k \to \infty$.
	However, by (\ref{eq_incr_phi00121}) and the usual properties of the Darvas distance, cf. \cite[Lemma 4.2]{DarvasFinEnerg}, we have $d_1((h^{\mathcal{F}^{(k)}}_1)^{\frac{1}{k}}, h^{\mathcal{F}}_1) \geq d_1(h^{\mathcal{F}^{[k]}}_1, h^{\mathcal{F}}_1)$, which establishes (\ref{eq_conv_deriv1}).
	\par 
	We will now establish (\ref{eq_weight_oper_appr}).
	Remark that due to (\ref{eq_bound_max_vol_appr}), there is $C > 0$, so that for any $k \geq \max( k_0, k_2)$, $l \in \nat^*$, we have
	\begin{equation}\label{eq_norm_a_bnd_unif}
		\Big\| 
			A( \mathcal{F}^{[k]}_l, {\textrm{Hilb}}_l(h^L))
			-
			A( \mathcal{F}_l, {\textrm{Hilb}}_l(h^L))
		\Big\|
		\leq
		C l.
	\end{equation}
	Remark also the classical fact that for an arbitrary operator $A_k \in {\rm{End}}(H^0(X, L^{\otimes k}))$, we have 
	\begin{equation}\label{eq_a_k_bnd_p1}
		\| A_k \|_p \leq \| A_k \|_1^{\frac{1}{p}} \cdot \| A_k \|^{1 - \frac{1}{p}}.
	\end{equation}
	From (\ref{eq_norm_a_bnd_unif}) and (\ref{eq_a_k_bnd_p1}), we see that it suffices to establish (\ref{eq_weight_oper_appr}) for $p = 1$.
	\par 
	Directly from (\ref{eq_monotone_ak}), we have
	\begin{multline}\label{eq_norm_trace_appr}
		\Big\| 
			A( \mathcal{F}^{[k]}_l, {\textrm{Hilb}}_l(h^L))
			-
			A( \mathcal{F}_l, {\textrm{Hilb}}_l(h^L))
		\Big\|_1
		\\
		=
		\frac{1}{N_k} \Big(
		\tr{A( \mathcal{F}_l, {\textrm{Hilb}}_l(h^L))}
		-
		\tr{A( \mathcal{F}^{[k]}_l, {\textrm{Hilb}}_l(h^L))} \Big).
	\end{multline}
	\par 
	\begin{sloppypar}
	Recall that the volume, ${\rm{vol}}(\mathcal{F})$, of a submultiplicative filtration $\mathcal{F}$ is defined as follows
	\begin{equation}
		{\rm{vol}}(\mathcal{F})
		:=
		\lim_{k \to \infty} \frac{n!}{k^{n + 1}} \sum_{i = 1}^{N_k} e_{\mathcal{F}}(i, k), 
	\end{equation}
	where $e_{\mathcal{F}}(i, k)$ are the \textit{jumping numbers} of the filtration $\mathcal{F}_k$ on $H^0(X, L^{\otimes k})$, defined in (\ref{eq_defn_jump_numb}).
	\end{sloppypar}
	\par 
	By (\ref{eq_basic_id}), (\ref{eq_norm_trace_appr}) and the above discussion, we see that (\ref{eq_weight_oper_appr}) would follow if we establish that ${\rm{vol}}(\mathcal{F}^{[k]}) \to {\rm{vol}}(\mathcal{F})$.
	For a similar approximation scheme, where instead of the filtration $\mathcal{F}^{[k]}$, we consider the filtration $\mathcal{F}^{(k)}$ defined above, this was done by Boucksom-Jonsson in \cite[Theorem 3.18 and (3.14)]{BouckJohn21} and later by the author in \cite[Theorem 5.10]{FinSecRing}. 
	However, by proceeding as in (\ref{eq_incr_phi00121}), we easily see that this implies that the statement also holds for $\mathcal{F}^{[k]}$.
	\end{proof}

\section{Transfer operator between $L^2$-norms, a proof of Theorem \ref{thm_trasnfer}}\label{sect_transfer}
	In this section, we describe our proof of  Theorem \ref{thm_trasnfer}.
	Our approach is inspired by Berndtsson's previous work \cite{BerndtProb}, in the sense that it also relies on the connection between geodesics in the space of Hermitian metrics and the curvature of the associated Hermitian vector bundles. 
	However, we diverge from it in several important ways.
	\par 
	First, by using the curvature calculations of $L^2$-metrics due to Ma-Zhang \cite{MaZhangSuperconnBKPubl}, we demonstrate that $L^2$-metrics can be used not only to construct superinterpolating families for the geodesic between two $L^2$-metrics, as in \cite{BerndtProb}, but also subinterpolating ones. 
	This enables a much more refined analysis of the transfer operator, which gives an even stronger version of Theorem \ref{thm_trasnfer} when the Mabuchi geodesic is smooth and passes through positive metrics.
	\par 
	Second, \cite{MaZhangSuperconnBKPubl} only works for smooth families of metrics, so we need to regularize our geodesic, which is not smooth in general. 
	While a similar regularity issue arises in \cite{BerndtProb} and is addressed there by selecting an \textit{arbitrary} smooth subgeodesic, the specific choice of subgeodesic is critical in our approach. 
	In particular, the construction of $\epsilon$-geodesics from \cite{ChenGeodMab} is crucial to our method.
	However, due to the lack of non-collapsing estimates for $\epsilon$-geodesics, see the discussion after (\ref{eq_r_k_eps}), this inevitably introduces additional challenges. 
	Specifically, for non-smooth Mabuchi geodesics, we can only show that $L^2$-metrics form subinterpolating families over a small interval that contains an endpoint.
	Relying on our previous work \cite{FinSecRing}, we nevertheless show that this is enough to establish Theorem \ref{thm_trasnfer}.
	\par 
	We will now proceed as follows.
	We first establish Theorem \ref{thm_trasnfer} for smooth endpoints, verifying an additional assumption that the Mabuchi geodesic between them is smooth and passes through positive metrics.
	Remark that it was established by Lempert-Vivas \cite{LempViv} and Darvas-Lempert \cite{DarvLempWeakReg} that for general endpoints, there is no smooth Mabuchi geodesic relating them.
	We then establish Theorem \ref{thm_trasnfer} for arbitrary smooth endpoints.
	By approximation, we establish Theorem \ref{thm_trasnfer} in its full generality.
	\par 
	Let us recall first a classical result making a connection between the geodesic construction in the space $\mathcal{H}_V$ of Hermitian norms on a complex vector space $V$ and positivity properties of related vector bundles.
	For this, similarly to (\ref{eq_defn_hat_u}), we identify a smooth path $H_t \in \mathcal{H}_V$, $t \in [0, 1]$, with the rotationally-invariant Hermitian metric $\hat{H}$ on the (trivial) vector bundle $V \times \mathbb{D}(e^{-1}, 1)$ over $\mathbb{D}(e^{-1}, 1)$, through the formula 
	\begin{equation}\label{eq_defn_hat_H}
		\hat{H}(\tau) = H_{t}, \quad \text{where} \quad t = - \log |\tau|.
	\end{equation}
	\par 
	Recall that a Hermitian vector bundle $(E, h^E)$ over a Riemann surface $S$ is called positive, cf. \cite[\S VII.6]{DemCompl}, if the curvature $R^E$ of the Chern connection of $(E, h^E)$, for any $s \in S$, can be written as $R^E_s = dz \wedge d\overline{z} \cdot A(s)$, for a positively definite $A(s) \in {\rm{End}}(E_s)$ and a local holomorphic coordinate $z$ on $S$, centered at $s$.
	\begin{thm}[ {\cite[Theorem 4.2]{RochbergInter}, \cite[Theorem 4.1, \S 15]{CoifmanSemmes} }]\label{thm_curv_geod}
		Assume that a smooth path $H_t^0 \in \mathcal{H}_V$, $t \in [0, 1]$, is such that the associated Hermitian metric $\hat{H}_0$ on $V \times \mathbb{D}(e^{-1}, 1)$ is positive (resp. negative). 
		Then for the geodesic $H_t$ between $H_t^0$ and $H_t^1$, we have $H_t^0 \geq H_t$ (resp. $H_t^0 \leq H_t$).
		In particular, the following inequality is satisfied  $\dot{H}_0^0 \geq \dot{H}_0$ (resp. $\dot{H}_0^0 \leq \dot{H}_0$).
		The path $H_t^0$ is then called a superinterpolating (resp. subinterpolating) family.
	\end{thm}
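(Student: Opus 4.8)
The plan is to reduce the statement to a one–variable maximum principle via the classical dictionary between geodesics in $\mathcal{H}_V$ and complex interpolation. First I would pass from the annulus $\dd(e^{-1},1)$ to the strip $\mathcal{S} := \{ z \in \comp : 0 < \Re z < 1 \}$ through $z = - \log \tau$: since all the data are rotation invariant they descend to functions of $\Re z$ alone, and the trivial bundles $V \times \dd(e^{-1},1)$, $V^* \times \dd(e^{-1},1)$ pull back to trivial bundles over $\mathcal{S}$. Writing $T \in \enmr{V}$ for the transfer map between $H_0 = H_0^0$ and $H_1 = H_1^0$, and fixing an $H_0$-orthonormal eigenbasis $e_1, \dots, e_n$ of $T$ with eigenvalues $\lambda_1, \dots, \lambda_n$, the geodesic $H_t$, $t = \Re z$, satisfies $\langle e_i, e_j \rangle_{H_t} = \delta_{ij} e^{- t \lambda_i}$. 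The point of this normalization is the existence of distinguished holomorphic sections of \emph{constant} norm: for a fixed interior point $z_0 \in \mathcal{S}$, $t_0 := \Re z_0$, and for $v = \sum_i v_i e_i \in V$, $g = \sum_i g_i e_i^* \in V^*$, put
\[
 f(z) := \sum_i v_i e^{\frac{\lambda_i}{2}(z - z_0)} e_i, \qquad G(z) := \sum_i g_i e^{- \frac{\lambda_i}{2}(z - z_0)} e_i^* ;
\]
a direct computation gives $\| f(z) \|^2_{H_{\Re z}} = \sum_i |v_i|^2 e^{- \lambda_i t_0}$ and $\| G(z) \|^2_{(H_{\Re z})^*} = \sum_i |g_i|^2 e^{\lambda_i t_0}$, both independent of $z$, and both $f$, $G$ are bounded and bounded away from $0$ on $\overline{\mathcal{S}}$.

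Suppose $\hat{H}_0$ is positive. Then its dual metric $(\hat{H}_0)^*$ on $V^* \times \dd(e^{-1},1)$ is negative, and negativity of a Hermitian holomorphic bundle over a Riemann surface forces $z \mapsto \log \| G(z) \|^2_{(\hat{H}_0(z))^*}$ to be subharmonic on $\mathcal{S}$ for every holomorphic section $G$ (the standard identity bounding $\imun \partial \dbar \log \| G \|^2$ below by $- \langle \imun \Theta G, G \rangle / \| G \|^2$, together with the sign of the curvature). Now fix $z_0$, $g \in V^*$, and take $G$ as above. Since $\hat{H}_0$ and the geodesic bundle metric $\hat{H}$ share the endpoints $H_0$, $H_1$, they agree on $\partial \mathcal{S}$, so on $\partial \mathcal{S}$ we have $\log \| G(z) \|^2_{(\hat{H}_0(z))^*} = \log \| G(z) \|^2_{(\hat{H}(z))^*} \equiv \log \| g \|^2_{(H_{t_0})^*}$, a constant. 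As $\log \| G \|^2_{(\hat{H}_0)^*}$ is bounded and subharmonic, Phragm\'en--Lindel\"of for the strip yields $\log \| G(z_0) \|^2_{(\hat{H}_0(z_0))^*} \leq \log \| g \|^2_{(H_{t_0})^*}$, i.e. $\| g \|^2_{(H^0_{t_0})^*} \leq \| g \|^2_{(H_{t_0})^*}$. Letting $g$ vary, $(H^0_{t_0})^* \leq (H_{t_0})^*$, which is equivalent to $H_{t_0}^0 \geq H_{t_0}$. The negative case is identical after replacing $G$, $(\hat{H}_0)^*$, $(H_{t_0})^*$ by $f$, $\hat{H}_0$, $H_{t_0}$ (the bundle $\hat{H}_0$ being now itself negative), giving $\| v \|^2_{H^0_{t_0}} \leq \| v \|^2_{H_{t_0}}$ for all $v$, i.e. $H_t^0 \leq H_t$.

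The derivative statement is then immediate: from $H_0^0 = H_0$ and $H_t^0 \geq H_t$ for $t \in [0,1]$ one differentiates the quadratic forms at $t = 0$, obtaining $\frac{d}{dt}\big( H_t^0(v,v) - H_t(v,v) \big)\big|_{t=0} \geq 0$ for every $v \in V$, which is exactly $\dot{H}_0^0 \geq \dot{H}_0$ in the operator order (both operators being Hermitian with respect to $H_0$); the inequality for the negative case is symmetric.

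I expect the only genuine difficulty to be bookkeeping rather than conceptual: aligning the sign convention for ``positive/negative Hermitian bundle'' with the curvature identity for $\log \| \cdot \|^2$ and with the duality ($E$ positive $\Leftrightarrow$ $E^*$ negative), and making the passage to the strip rigorous — in particular verifying the boundedness needed for Phragm\'en--Lindel\"of, which is precisely where one uses the specific sections $f$, $G$ of constant $\hat{H}$-norm rather than an arbitrary holomorphic extension (the constant-norm property is what makes the boundary data of the maximum principle equal to the geodesic norm at $z_0$, and is essentially the Calder\'on interpolation formula in disguise). One could instead invoke the abstract interpolation statements of Rochberg and Coifman--Semmes verbatim, but the direct strip argument seems cleaner and more self-contained.
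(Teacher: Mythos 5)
Your argument is correct: the passage to the strip, the constant-$\hat H$-norm holomorphic sections $f$ and $G$, the subharmonicity of $\log\|\cdot\|^2$ under (dual) negativity, and the Phragm\'en--Lindel\"of step all fit together, and the derivative statement follows as you say since the two paths agree at $t=0$. Note that the paper offers no proof of this theorem --- it is quoted from Rochberg and Coifman--Semmes --- and your strip/maximum-principle argument is essentially the standard Calder\'on-interpolation proof given in those references, so there is nothing to compare beyond the citation.
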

	\par 
	We will also need the result of Ma-Zhang \cite{MaZhangSuperconn}, \cite{MaZhangSuperconnBKPubl} on the curvature of direct image bundles. 
	We fix a smooth family of (strictly) positive Hermitian metrics $h^L_{\tau}$, $\tau \in \mathbb{D}(e^{-1}, 1)$ on $L$ and a smooth family of Kähler forms $\chi_{\tau}$, $\tau \in \mathbb{D}(e^{-1}, 1)$, on $X$.
	We denote by $\omega := c_1(L \times \mathbb{D}(e^{-1}, 1), h^L_{\tau})$ the curvature of $h^L_{\tau}$, viewed as a metric on the line bundle $L \times \mathbb{D}(e^{-1}, 1)$ over $X \times \mathbb{D}(e^{-1}, 1)$.
	\par 
	For $\tau \in \mathbb{D}(e^{-1}, 1)$, we define $\omega_H(\tau) \in \ccal^{\infty}(X)$ as 
	\begin{equation}
		\omega_H(\tau)(x) := \frac{1}{n + 1} \frac{\omega^{n + 1}}{\omega^{n} \wedge \imun dz \wedge d \overline{z}}(x, \tau).
	\end{equation}
	The denominator above is nonzero, as $\omega$ is positive along the fibers.
	\par 
	Now, we denote by $R_k$ the curvature of the Chern connection on the trivial vector bundle $H^0(X, L^{\otimes k}) \times \mathbb{D}(e^{-1}, 1)$ associated with the fiberwise $L^2$-metric ${\textrm{Hilb}}_k(h^L_{\tau}, \chi_{\tau})$, $\tau \in \mathbb{D}(e^{-1}, 1)$, induced by $h^L_{\tau}$.
	We introduce $D_k(\tau) \in {\rm{End}}(H^0(X, L^{\otimes k}))$ through the identity $\frac{\imun}{2 \pi} R_{k, \tau} := \imun dz \wedge d \overline{z} \cdot D_k(\tau)$.
	The following result is the technical backbone of our analysis.
	\begin{thm}[{ Ma-Zhang \cite[Theorem 0.4]{MaZhangSuperconnBKPubl} }]\label{thm_mazh}
		There are $C > 0$, $k_0 \in \nat$, such that in the notations from Definition \ref{defn_toepl}, for any $k \geq k_0$, $\tau \in \mathbb{D}(e^{-1}, 1)$, we have 
		\begin{equation}
			\Big\|
				D_k(\tau)
				-
				k T_{\omega_H(\tau), k}
			\Big\|
			\leq
			C,
		\end{equation}
		where $\| \cdot \|$ is the operator norm subordinate with ${\textrm{Hilb}}_k(h^L_{\tau}, \chi_{\tau})$.
		In particular, $\{ \frac{1}{k} D_k(\tau) \}_{k = 1}^{+ \infty}$ forms a Toeplitz operator with symbol $\omega_H(\tau)$.
	\end{thm}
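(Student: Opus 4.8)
The statement is due to Ma--Zhang \cite{MaZhangSuperconnBKPubl}, and in the body we simply invoke it; let me nonetheless outline how I would prove it. The plan is to combine an exact formula for the curvature $R_k$ of the $L^2$-metric on a direct image bundle with elementary uniform estimates (with the Bergman kernel expansion with a parameter as a robust substitute for the latter). The first move is to put ourselves in a setting where Berndtsson's curvature formula applies. Since $\pi : X\times\mathbb{D}(e^{-1},1)\to\mathbb{D}(e^{-1},1)$ is trivial, its relative canonical bundle is $\pi^*K_X$, so writing $H^0(X,L^{\otimes k})=H^0\big(X,(L^{\otimes k}\otimes K_X^{-1})\otimes K_X\big)$ and equipping $F_\tau:=L^{\otimes k}\otimes K_X^{-1}$ with $(h^L_\tau)^{\otimes k}$ tensored with the metric on $K_X^{-1}$ induced by $\chi_\tau$, the Hermitian space $\big(H^0(X,L^{\otimes k}),{\textrm{Hilb}}_k(h^L_\tau,\chi_\tau)\big)$ becomes the Bergman space of $K_X$-twisted holomorphic sections with its canonical $L^2$-inner product. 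The curvature of the metric on $F_\tau$ is $k\omega+\Psi_\tau$, where $\Psi_\tau=\imun\partial\bar\partial\psi_\tau$ is a $k$-independent smooth family of $(1,1)$-forms; for $k\geq k_0$ it is fibrewise positive, which is the hypothesis the formula needs.

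Berndtsson's curvature formula then gives, for $s\in H^0(X,L^{\otimes k})$,
\[
	\langle D_k(\tau)s,s\rangle=\frac{1}{2\pi}\int_X\eta_k(\cdot,\tau)\,|s|^2_{h^{L^{\otimes k}}_\tau}\,\frac{\chi_\tau^n}{n!}+\mathcal{S}_k(s,s),
\]
where $\eta_k(\cdot,\tau)$ is the geodesic curvature of $k\omega+\Psi_\tau$ and $\mathcal{S}_k\geq 0$ is the second fundamental form term, built from $s$ and the variation of the metric; it satisfies $0\leq\mathcal{S}_k(s,s)\leq\|\bar\partial_{\mu_\tau}s\|^2_{L^2_\tau}$, where $\mu_\tau$ is the Kodaira--Spencer-type $(0,1)$-form attached to the horizontal lift of $\partial_\tau$ determined by $k\omega+\Psi_\tau$ and $\bar\partial_{\mu_\tau}$ denotes its action on $s$. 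Since $D_k(\tau)$, $T_{\omega_H(\tau),k}$ and $\mathcal{S}_k$ are all self-adjoint with respect to ${\textrm{Hilb}}_k(h^L_\tau,\chi_\tau)$, bounding these quadratic forms bounds the operator norms.

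From here the argument splits in two. For the first term, expanding $(k\omega+\Psi_\tau)^{n+1}$ and $(k\omega+\Psi_\tau)^n$ gives $\tfrac1{2\pi}\eta_k(\cdot,\tau)=k\,\omega_H(\tau)+r_k(\cdot,\tau)$ with $\|r_k(\cdot,\tau)\|_{L^\infty(X)}\leq C$ uniformly in $k$ and $\tau$; and since $\tfrac1{n!}\int_X f\,|s|^2_{h^{L^{\otimes k}}_\tau}\chi_\tau^n=\langle T_{f,k}s,s\rangle$ for every bounded $f$ and every holomorphic $s$ (as $B_k$ is an orthogonal projection), the first term equals $k\langle T_{\omega_H(\tau),k}s,s\rangle+\langle T_{r_k,k}s,s\rangle$ with $\|T_{r_k,k}\|\leq\|r_k\|_{L^\infty(X)}\leq C$. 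For the second term, the key point is that $\mu_\tau$ is $k$-independent up to an $O(1/k)$ error: the vertical part of the horizontal lift of $\partial_\tau$ has coefficients $-\sum_j(k g_\omega+g_\Psi)^{i\bar j}\,\partial_\tau\partial_{\bar j}(k\varphi_\tau+\psi_\tau)$, in which the factor $k$ cancels between the inverse metric and the potential. Hence $\|\bar\partial_{\mu_\tau}s\|^2_{L^2_\tau}\leq C\|s\|^2_{L^2_\tau}$ uniformly, so $0\leq\mathcal{S}_k(s,s)\leq C\|s\|^2_{L^2_\tau}$ and $\|\mathcal{S}_k\|\leq C$. Combining the two estimates gives the asserted bound, and the ``in particular'' statement follows immediately: $\|\tfrac1k D_k(\tau)-T_{\omega_H(\tau),k}\|\leq C/k\to 0$ and $\omega_H(\tau)$ is continuous on $X$.

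The hard part is establishing the precise form of Berndtsson's formula in this twisted, relative-canonical setting — isolating the second fundamental form term and controlling, uniformly in $k$ and in $\tau\in\mathbb{D}(e^{-1},1)$, all the lower-order contributions — which is the technical content of \cite{MaZhangSuperconnBKPubl}. A robust alternative, and the route actually taken there, is the superconnection method: one forms the Bismut-type superconnection on the infinite-rank bundle of $L^{\otimes k}$-valued $(0,\bullet)$-forms, uses the spectral gap of the $\bar\partial$-Laplacian (its nonzero eigenvalues on $(0,q)$-forms with $q\geq 1$ are bounded below by $ck$, by Bochner--Kodaira, cf.\ \cite{DemCompl}) to localize the curvature of the associated Quillen-type $L^2$-metric onto $H^0(X,L^{\otimes k})$, and extracts the leading term from a parametrix expansion of the fibrewise Bergman kernel, cf.\ \cite{MaHol}; this has the advantage of yielding a full asymptotic expansion of $D_k(\tau)$ in powers of $k$.
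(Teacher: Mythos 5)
The paper offers no proof of this statement at all: it is quoted with attribution from Ma--Zhang \cite{MaZhangSuperconnBKPubl}, whose actual argument is the superconnection/spectral-gap/Bergman-kernel-expansion route you describe in your closing paragraph. Your main sketch --- Berndtsson's curvature formula for the direct image of $K_{X\times\mathbb{D}(e^{-1},1)/\mathbb{D}(e^{-1},1)}\otimes F$ after absorbing $\chi_\tau$ into a metric on $K_X^{-1}$ --- is therefore a genuinely different route, and it is adequate for what the paper actually uses, namely the uniform $O(1)$ operator-norm bound (it would not, however, reproduce the full asymptotic expansion in powers of $k$ that Ma--Zhang's Theorem 0.4 gives). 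The skeleton is right: the splitting into a geodesic-curvature term and a non-negative correction term, the identity $\frac{1}{n!}\int_X f|s|^2\chi_\tau^n=\langle T_{f,k}s,s\rangle$ for holomorphic $s$, the bound $\|T_{r_k,k}\|\leq\|r_k\|_{L^\infty}$, and the expansion $c(k\varphi_\tau+\psi_\tau)=k\,\omega_H(\tau)+O(1)$, which is uniform because $\omega$ is fibrewise positive with uniform lower bound. Two points deserve to be made explicit rather than asserted. First, the estimate $\mathcal{S}_k(s,s)\leq C\|s\|^2$ is only harmless because in the $(n,0)$-form picture the map $s\mapsto\mu_\tau\wedge s$ is a zeroth-order contraction (no covariant derivatives of $s$, which would cost a factor $\sqrt{k}$), and because the pointwise norm of $\mu_\tau$ measured against the rescaled fibre metric $k\omega_\tau+\Psi_\tau$ remains comparable to its $\omega_\tau$-norm; both facts should be checked, and one should pin down the exact form of the correction term ($\langle(\Box''+1)^{-1}\eta,\eta\rangle$ with $\eta=\mu_\tau\wedge s$ in Berndtsson's formulation, where the spectral gap even gives an extra factor $k^{-1}$). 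Second, uniformity in $\tau$ over the \emph{open} annulus requires the families $h^L_\tau$, $\chi_\tau$ to be uniformly controlled up to the boundary; this holds in all of the paper's applications (rotation-invariant families smooth on the closed annulus) but is an implicit hypothesis in your estimates. With those caveats the outline is sound.
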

	We will now establish a refinement of Theorem \ref{thm_trasnfer} under more restrictive assumptions. 
	We assume that the Mabuchi geodesic $h^L_t$, $t \in [0, 1]$, between two positive smooth metrics $h^L_i$, $i = 0, 1$, is smooth (jointly in $X$ and $t$ directions) and $h^L_t$ are (strictly) positive Hermitian metrics.
	We denote by $T_k(h^L_0, h^L_1) \in {\rm{End}}(H^0(X, L^{\otimes k}))$ the transfer map between ${\textrm{Hilb}}_k(h^L_0, \chi_0)$ and ${\textrm{Hilb}}_k(h^L_1, \chi_1)$, where $\chi_0$, $\chi_1$ are some Kähler forms on $X$.
	\begin{thm}\label{thm_trasnfer_sm}
		In the notations of Definition \ref{defn_toepl} and Theorem \ref{thm_trasnfer}, under the described above assumption, there are $C > 0$, $k_0 \in \nat$, such that for any $k \geq k_0$, we have
		\begin{equation}
			\big\| 
				T_k(h^L_0, h^L_1)
				-
				k T_{\phi(h^L_0, h^L_1), k}
			\big\|
			\leq
			C.
		\end{equation}
	\end{thm}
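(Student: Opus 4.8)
The plan is to realize $T_k(h^L_0,h^L_1)$ as the initial derivative of the geodesic of Hermitian norms joining ${\rm{Hilb}}_k(h^L_0,\chi_0)$ to ${\rm{Hilb}}_k(h^L_1,\chi_1)$, and to trap this geodesic between two explicit smooth paths of $L^2$-norms built along the given Mabuchi geodesic: one superinterpolating and one subinterpolating in the sense of Theorem~\ref{thm_curv_geod}. First I would fix a smooth, rotation-invariant family of Kähler forms $\chi_\tau$, $\tau\in\mathbb{D}(e^{-1},1)$, restricting to $\chi_0$ and $\chi_1$ on the two boundary circles (write $\chi_t:=\chi_{-\log|\tau|}$), and consider the smooth path $H^{L^2}_{t,k}:={\rm{Hilb}}_k(h^L_t,\chi_t)$, which joins ${\rm{Hilb}}_k(h^L_0,\chi_0)$ to ${\rm{Hilb}}_k(h^L_1,\chi_1)$. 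Differentiating the defining $L^2$-pairing at $t=0$ and projecting the non-holomorphic terms back with the Bergman projection gives $\dot{H}^{L^2}_{0,k} = -k\,T_{\phi(h^L_0,h^L_1),k} + T_{g,k}$, where the factor $\phi(h^L_0,h^L_1)=-\dot{h}^L_0$ enters through $\frac{d}{dt}(h^L_t)^{\otimes k}|_{t=0}$ and $g\in L^\infty(X)$ is the density of $\frac{d}{dt}\chi_t^n|_{t=0}$ relative to $\chi_0^n$; since $\|T_{g,k}\|\leq\|g\|_{L^\infty(X)}$ for all $k$, the correction $T_{g,k}$ is uniformly bounded in operator norm. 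On the other side, the geodesic $H^{\rm geod}_{t,k}$ between the same endpoints satisfies $\dot{H}^{\rm geod}_{0,k}=-T_k(h^L_0,h^L_1)$ directly from the definitions of a geodesic in $\mathcal{H}_V$ and of the transfer map.

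The crucial input is that, since $h^L_t$ is a Mabuchi geodesic, the fibrewise Monge--Ampère density $\omega_H(\tau)$ of the family $(L\times\mathbb{D}(e^{-1},1),h^L_\tau)$ vanishes identically: this is precisely the homogeneous complex Monge--Ampère equation (\ref{eq_ma_geod}), valid here because our hypothesis guarantees that $h^L_t$ is smooth and strictly positive, so the denominator defining $\omega_H$ is nowhere zero. Feeding $h^L_\tau$ and $\chi_\tau$ into Ma--Zhang's Theorem~\ref{thm_mazh}, whose hypotheses are met since $h^L_t$ is smooth and strictly positive on $[0,1]$, and using $\omega_H(\tau)\equiv 0$, produces $C>0$ and $k_0\in\nat$ such that the curvature endomorphisms $D_k(\tau)$ of the trivial bundle $H^0(X,L^{\otimes k})\times\mathbb{D}(e^{-1},1)$ equipped with the fibrewise norm $H^{L^2}_{\cdot,k}$ satisfy $\|D_k(\tau)\|\leq C$ for all $k\geq k_0$ and all $\tau$.

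Next I would perturb $H^{L^2}_{\cdot,k}$ by a $t$-dependent conformal factor to land inside the hypotheses of Theorem~\ref{thm_curv_geod}. For a large constant $C'>0$, set $\hat{H}^{\pm}_k:=\hat{H}^{L^2}_k\cdot\exp(\mp C'(t^2-t))$, with $t=-\log|\tau|$. Since $t^2-t$ vanishes at $t=0$ and $t=1$, the endpoints are unchanged, so the $\hat{H}^{\pm}_k$ are again paths from ${\rm{Hilb}}_k(h^L_0,\chi_0)$ to ${\rm{Hilb}}_k(h^L_1,\chi_1)$; and since $t^2-t$ is convex in $t$, the curvature of $\hat{H}^{L^2}_k$ is modified by $\pm c\,C'\cdot{\rm{Id}}$ for a universal $c>0$, so for $C'>C/c$ the metric $\hat{H}^+_k$ becomes positive and $\hat{H}^-_k$ becomes negative. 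Theorem~\ref{thm_curv_geod} then yields $\dot{H}^-_{0,k}\leq\dot{H}^{\rm geod}_{0,k}\leq\dot{H}^+_{0,k}$, while $\dot{H}^{\pm}_{0,k}=\dot{H}^{L^2}_{0,k}\pm C'\cdot{\rm{Id}}$ (the derivative at $t=0$ of $\mp C'(t^2-t)$ is $\pm C'$). Combining, $\dot{H}^{\rm geod}_{0,k}=-T_k(h^L_0,h^L_1)$ differs from $-k\,T_{\phi(h^L_0,h^L_1),k}$ by an operator trapped between $\pm C''\cdot{\rm{Id}}$ for some $C''$ independent of $k$; as $T_k(h^L_0,h^L_1)$ and $T_{\phi(h^L_0,h^L_1),k}$ are both self-adjoint with respect to ${\rm{Hilb}}_k(h^L_0,\chi_0)$, this is exactly the asserted estimate $\|T_k(h^L_0,h^L_1)-k\,T_{\phi(h^L_0,h^L_1),k}\|\leq C''$.

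The main obstacle, and the reason this sharp $O(1)$-statement is confined to the smooth case, is that the whole argument rests on being able to feed a genuinely smooth family of strictly positive metrics into Ma--Zhang's curvature asymptotics: it is this that simultaneously supplies the identity $\omega_H\equiv 0$ and the uniform curvature bound $\|D_k(\tau)\|\leq C$, after which the two-sided comparison comes essentially for free. The only genuinely delicate routine steps are the careful differentiation yielding $\dot{H}^{L^2}_{0,k}=-k\,T_{\phi(h^L_0,h^L_1),k}+O(1)$ together with the uniform control of $T_{g,k}$, and checking that the elementary conformal perturbation really does produce a positive (resp.\ negative) Hermitian bundle while shifting the initial derivative only by a scalar. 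For non-smooth Mabuchi geodesics none of this is available: one is forced to replace $h^L_t$ by $\epsilon$-geodesics, which destroys exact fibrewise harmonicity and, lacking non-collapsing estimates, gives control only over a short interval adjacent to an endpoint, which is why the general Theorem~\ref{thm_trasnfer} must be treated by a separate argument.
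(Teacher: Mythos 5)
Your proposal is correct and follows essentially the same route as the paper: identify $-T_k(h^L_0,h^L_1)$ with the initial derivative of the geodesic of norms, use Ma--Zhang's Theorem \ref{thm_mazh} together with the homogeneous Monge--Ampère equation to get the uniform curvature bound on the $L^2$-family along the Mabuchi geodesic, perturb by a rotation-invariant strictly subharmonic conformal factor vanishing on both boundary circles to manufacture positive and negative families, and sandwich via Theorem \ref{thm_curv_geod} before differentiating at $t=0$. The only differences are cosmetic (your weight $t^2-t$ versus the paper's quartic in $\log|\tau|$, both strictly convex in $t=-\log|\tau|$ and hence strictly subharmonic with uniform bounds on the annulus).
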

	\begin{proof}
		We fix a smooth path of Kähler forms $\chi_t$, $t \in [0, 1]$, on $X$ between $\chi_0$ and $\chi_1$, and consider the rotationally-invariant Hermitian metric $\hat{H}^0_k$ on the (trivial) vector bundle $H^0(X, L^{\otimes k}) \times \mathbb{D}(e^{-1}, 1)$ over $\mathbb{D}(e^{-1}, 1)$, constructed from ${\textrm{Hilb}}_k(h^L_t, \chi_t)$, $t \in [0, 1]$, as in (\ref{eq_defn_hat_H}).
		Directly from the fact that smooth Mabuchi geodesics solve the homogeneous Monge-Ampère equation (\ref{eq_ma_geod}), by Theorem \ref{thm_mazh}, we deduce that there are $C_0 > 0$, $k_0 \in \nat$, such that the curvature $R_k$ of $\hat{H}^0_k$ satisfies $\| R_k \| \leq C_0$ for any $k \geq k_0$.
		We denote
		\begin{equation}\label{eq_g_func_defn}
			g: \mathbb{D}(e^{-1}, 1) \to \real, \qquad \tau \mapsto g(\tau) := (2 \log |\tau|^2 - 1)^2 - 1.
		\end{equation}
		Remark that $g$ is strictly subharmonic and verifies $g(e^{-1 + i\theta}) = g(e^{i\theta}) = 0$, for any $\theta \in [0, 2 \pi]$.
		Directly from the bound $\| R_k \| \leq C_0$, and strict subharmonicity of $g$, there is $C_1 > 0$, such that the curvature of Hermitian metrics $\hat{H}^1_k = \hat{H}^0_k \cdot \exp(- C_1 g)$ (resp. $\hat{H}^2_k = \hat{H}^0_k \cdot \exp(C_1 g)$) is positive (resp. negative).
		We denote by $H^1_{k, t}$, $H^2_{k, t}$, $t \in [0, 1]$, the paths of metrics on $H^0(X, L^{\otimes k})$ induced through (\ref{eq_defn_hat_H}) by $\hat{H}^1_k$ and $\hat{H}^2_k$ respectively.
		Our boundary condition on $g$ implies that $H^1_{k, 0} = {\textrm{Hilb}}_k(h^L_0, \chi_0) = H^2_{k, 0}$ and $H^1_{k, 1} = {\textrm{Hilb}}_k(h^L_1, \chi_1) = H^2_{k, 1}$.
		From this and the above curvature calculation, we deduce by Theorem \ref{thm_curv_geod} that for any $t \in [0, 1]$, we have
		\begin{equation}
			H^1_{k, t} \geq H_{k, t} \geq H^2_{k, t},
		\end{equation}
		where $H_{k, t}$ is the geodesic between ${\textrm{Hilb}}_k(h^L_0, \chi_0)$ and ${\textrm{Hilb}}_k(h^L_1, \chi_1)$.
		By taking derivatives at $t = 0$ from the above inequality, we deduce that 
		\begin{equation}\label{eq_est_der_1}
			\dot{H}^1_{k, 0} \geq - T_k(h^L_0, h^L_1) \geq \dot{H}^2_{k, 0}.
		\end{equation}
		Directly from the definition of $H^1_{k, t}$, $H^2_{k, t}$, we deduce that 
		\begin{equation}\label{eq_est_der_2}
		\begin{aligned}
			& \dot{H}^1_{k, 0} = ({\textrm{Hilb}}_k(h^L_0, \chi_0))^{-1} \frac{d}{dt} {\textrm{Hilb}}_k(h^L_t, \chi_t)|_{t = 0} + C_1 g'(1) {\rm{Id}},
			\\
			& \dot{H}^2_{k, 0} = ({\textrm{Hilb}}_k(h^L_0, \chi_0))^{-1} \frac{d}{dt} {\textrm{Hilb}}_k(h^L_t, \chi_t)|_{t = 0} - C_1 g'(1) {\rm{Id}}.
		\end{aligned}
		\end{equation}
		From the definition of the $L^2$-norm, it is direct to see that
		\begin{equation}\label{eq_est_der_3}
			({\textrm{Hilb}}_k(h^L_0, \chi_0))^{-1} \frac{d}{dt} {\textrm{Hilb}}_k(h^L_t, \chi_t)|_{t = 0}
			=
			-
			k T_{\phi(h^L_0, h^L_1), k}
			+
			T_{  \chi_0^{-1} \frac{d}{dt} \chi_t|_{t = 0}, k}.
		\end{equation}
		From (\ref{eq_est_der_1}), (\ref{eq_est_der_2}) and (\ref{eq_est_der_3}), the result follows directly.
	\end{proof}
	\par 
	Now, to drop the additional assumption from Theorem \ref{thm_trasnfer_sm}, we need to regularize the Mabuchi geodesics.
	Let us recall the approximation scheme, called $\epsilon$-geodesics, introduced by X. Chen \cite{ChenGeodMab}.
	For this, we follow the notations introduced in (\ref{eq_ma_geod}).
	We fix $u_0, u_1 \in \mathcal{H}_{\omega}$, and for an arbitrary $\epsilon > 0$, consider the Dirichlet problem 
	\begin{equation}\label{eq_eps_geod}
		(\pi^* \omega + \imun \partial \dbar \hat{u})^{n + 1}
		=
		\epsilon \pi^* \omega^n \wedge \imun dz \wedge d \overline{z}, 
	\end{equation}
	with boundary conditions $\hat{u}(x, e^{\imun \theta}) = u_0(x)$, $\hat{u}(x, e^{-1 + \imun \theta}) = u_1(x)$, $x \in X, \theta \in [0, 2\pi]$.
	\begin{thm}[{\cite[Lemma 7]{ChenGeodMab}, \cite[\S 3]{ChuTossVeinC11}}]\label{thm_exist_eps_geod}
		For any $\epsilon > 0$, there is a unique $S^1$-invariant smooth solution $\hat{u}_{\epsilon}$ to (\ref{eq_eps_geod}), for which $\pi^* \omega + \imun \partial \dbar \hat{u}_{\epsilon}$ is strictly positive.
		Moreover, as $\epsilon \to 0$, $\hat{u}_{\epsilon}$ converges to the solution $\hat{u}$ of (\ref{eq_ma_geod}) in $\mathscr{C}^{1}(X \times \mathbb{D}(e^{-1}, 1))$.
		In particular, as $\epsilon \to 0$, $\dot{u}_{0, \epsilon} := \frac{d}{dt} u_{t, \epsilon}|_{t = 0}$ converges to $\dot{u}_0$ uniformly. 
	\end{thm}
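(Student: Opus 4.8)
\emph{Proof strategy.} The plan is to read \eqref{eq_eps_geod} as a \emph{nondegenerate} Dirichlet problem for a complex Monge--Amp\`ere equation on the compact complex manifold with boundary $M := X \times \overline{\mathbb{D}(e^{-1},1)}$, with semipositive reference form $\Omega := \pi^*\omega$ (degenerate only in the $\tau$-direction) and smooth boundary data $\varphi$ equal to $u_0$ on $X \times \{|\tau| = 1\}$ and to $u_1$ on $X \times \{|\tau| = e^{-1}\}$; the unknown is a potential $\hat u$ with $\omega_{\hat u} := \Omega + \imun\partial\dbar\hat u > 0$ as a $(1,1)$-form on $M$. Although $\Omega$ is degenerate, the equation is nondegenerate in $\hat u$ because the right-hand side $\epsilon\,\pi^*\omega^n \wedge \imun dz\wedge d\overline z$ is a genuine volume form. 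Fix $\epsilon > 0$. For \emph{uniqueness} (and hence $S^1$-invariance), observe that $F(\hat u) := \log\big(\omega_{\hat u}^{n+1}/(\pi^*\omega^n\wedge \imun dz\wedge d\overline z)\big)$ is elliptic and concave with no zeroth-order term along admissible potentials, so if $\hat u,\hat u'$ solve \eqref{eq_eps_geod} with the same boundary data then $v := \hat u - \hat u'$ satisfies a linear elliptic inequality $a^{i\overline j}\partial_i\partial_{\overline j}v \ge 0$ with $(a^{i\overline j})$ positive definite, whence $v \le \max_{\partial M}v = 0$ by the maximum principle; by symmetry $\hat u = \hat u'$. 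Since the rotation $\tau \mapsto e^{\imun\theta}\tau$ preserves \eqref{eq_eps_geod} and $\varphi$, uniqueness forces every solution to be $S^1$-invariant.

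\emph{Existence} is obtained by the continuity method in $\epsilon$ (or along a path of equations joining \eqref{eq_eps_geod} to a trivially solvable one). Openness is the implicit function theorem: the linearization of $F$ at a solution is linear elliptic with the Dirichlet condition, invertible on $\mathscr{C}^{2,\alpha}(M)$ by Schauder. Closedness reduces to a priori estimates (uniform in the continuity parameter, but allowed to depend on $\epsilon$): the $\mathscr{C}^0$-estimate from barriers built out of $\varphi$ and sub/supersolutions — the functions of the type $(2\log|\tau|^2-1)^2 - 1$ appearing in \eqref{eq_g_func_defn} are convenient building blocks — the $\mathscr{C}^1$-estimate from interior and boundary gradient bounds, the $\mathscr{C}^2$-estimate from a Yau-type interior second-order bound together with a boundary $\mathscr{C}^2$-estimate (here the positivity of $\epsilon$, which keeps the right-hand side bounded away from $0$, makes the boundary estimate standard), and then Evans--Krylov plus Schauder bootstrapping upgrade this to $\mathscr{C}^\infty(M)$ with $\omega_{\hat u_\epsilon} > 0$. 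This is the content of \cite[Lemma~7]{ChenGeodMab}, with the estimates organized in \cite[\S3]{ChuTossVeinC11}.

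\emph{Convergence as $\epsilon \to 0$.} The comparison argument of the first paragraph gives monotonicity: for $\epsilon_1 < \epsilon_2$, at an interior maximum of $\hat u_{\epsilon_2} - \hat u_{\epsilon_1}$ one would have $\omega_{\hat u_{\epsilon_2}} \le \omega_{\hat u_{\epsilon_1}}$, hence $\epsilon_2 \le \epsilon_1$, a contradiction, so $\hat u_{\epsilon_1} \ge \hat u_{\epsilon_2}$; moreover $\hat u_\epsilon \le \hat u$, where $\hat u$ is the solution of the homogeneous equation \eqref{eq_ma_geod} (which exists by \cite{ChenGeodMab}, \cite{BlockiGeod}, \cite{ChuTossVeinC11}), since $\hat u_\epsilon$ is an admissible competitor in the envelope characterization of $\hat u$. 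Thus $\hat u_\epsilon$ increases to some $\hat u_* \le \hat u$ as $\epsilon \downarrow 0$. Granting the \emph{$\epsilon$-uniform} bound $\|\hat u_\epsilon\|_{\mathscr{C}^{1,1}(M)} \le C$, Arzel\`a--Ascoli together with the monotonicity gives $\hat u_\epsilon \to \hat u_*$ in $\mathscr{C}^{1,\alpha}(M)$ for every $\alpha \in [0,1[$, in particular in $\mathscr{C}^1(M)$; by Bedford--Taylor stability $\hat u_*$ is a bounded solution of \eqref{eq_ma_geod} with boundary data $\varphi$, so $\hat u_* = \hat u$ by uniqueness of such solutions (Berndtsson \cite{BernBrunnMink}). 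Finally, since $t = -\log|\tau|$, convergence in $\mathscr{C}^1(M)$ up to the boundary yields uniform convergence of the radial, hence of the $t$-, derivative at $|\tau| = 1$, i.e. $\dot u_{0,\epsilon} \to \dot u_0$ uniformly on $X$.

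\emph{Main obstacle.} Everything except the $\epsilon$-uniform gradient (and ultimately $\mathscr{C}^{1,1}$) estimate is routine linear and maximum-principle theory; the genuinely hard input — and the reason the limit is a bona fide geodesic rather than a degenerate object — is the a priori first- and second-order bound that does not blow up as $\epsilon \to 0$, especially near the boundary circles $|\tau| = e^{-1},1$. This is precisely what is established in \cite{ChenGeodMab} and sharpened in \cite{ChuTossVeinC11}.
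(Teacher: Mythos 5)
The paper gives no proof of this statement — it is imported verbatim from \cite{ChenGeodMab} and \cite{ChuTossVeinC11} — and your write-up is a correct reconstruction of the standard argument behind those references: uniqueness and $S^1$-invariance via the maximum principle and concavity of the log Monge--Amp\`ere operator, existence via the continuity method with Schauder/Evans--Krylov bootstrapping, and convergence via monotonicity in $\epsilon$ combined with the $\epsilon$-uniform $\mathscr{C}^{1,1}$ a priori estimates. You correctly isolate the only genuinely hard ingredient, the first- and second-order bounds uniform in $\epsilon$ up to the boundary, and delegate it to exactly the sources the paper itself cites, so the proposal is consistent with the paper's treatment.
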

	We will also need another ingredient from \cite[Theorem 4.20]{FinSecRing}, which we recall below.
	\begin{thm}\label{thm_equiv_geod}
		We fix continuous psh metrics $h^L_0$, $h^L_1$ on $L$, and a smooth path of Kähler forms $\chi_t$, $t \in [0, 1]$, on $X$. 
		There is a sequence $a_k \in \real$, verifying $a_k/k \to 0$, as $k \to \infty$, and $k_0 \in \nat$, such that for any $k \geq k_0$, $t \in [0, 1]$, the geodesic $H_{k, t}$, $t \in [0, 1]$, between ${\rm{Hilb}}_k(h^L_0, \chi_0)$ and ${\rm{Hilb}}_k(h^L_1, \chi_1)$ is related with the Mabuchi geodesic $h^L_t$, $t \in [0, 1]$, between $h^L_0$ and $h^L_1$ as follows
		\begin{equation}
			{\rm{Hilb}}_k(h^L_t, \chi_t)
			\cdot
			\exp(-a_k)
			\leq
			H_{k, t}
			\leq
			{\rm{Hilb}}_k(h^L_t, \chi_t)
			\cdot
			\exp(a_k).
		\end{equation}
	\end{thm}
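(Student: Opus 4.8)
This is \cite[Theorem 4.20]{FinSecRing}; here we only outline the strategy. The plan is to sandwich the Bergman geodesic $H_{k,t}$ between $\textrm{Hilb}_k(h^L_t,\chi_t)$ and the same quantity up to a multiplicative factor $\exp(\pm a_k)$ with $a_k/k\to 0$, by comparing both with the $L^2$-metrics of a smooth $\epsilon$-geodesic and applying the super/sub-interpolation principle of Theorem \ref{thm_curv_geod} together with the Ma--Zhang curvature formula of Theorem \ref{thm_mazh}.

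First I would reduce to the case of smooth positive endpoints: approximating $h^L_0,h^L_1$ from above and below by smooth positive metrics via Demailly's regularization \cite{DemRegul}, and using the comparison of Mabuchi geodesics (Proposition \ref{prop_comp_geod} and (\ref{eq_bnd_darvas_sup})) together with the monotonicity of geodesics in $\mathcal H_V$ in their endpoints (Theorem \ref{thm_interpol}), one sees that both $\textrm{Hilb}_k(h^L_t,\chi_t)$ and $H_{k,t}$ change only by a factor $\exp(o(k))$ when the regularization parameter is sent to $0$ slowly, and this error is absorbed into $a_k$. Assume then $h^L_0,h^L_1$ smooth and positive, and for $\epsilon>0$ let $h^L_{t,\epsilon}=e^{-u_{t,\epsilon}}h^L_0$ be the $\epsilon$-geodesic of Theorem \ref{thm_exist_eps_geod}, a smooth path of positive metrics having the same endpoints as $h^L_t$ and with $\delta_\epsilon:=\sup_{X\times[0,1]}|u_{t,\epsilon}-u_t|\to 0$. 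Build the rotationally invariant metric $\hat H_k$ on $H^0(X,L^{\otimes k})\times\mathbb D(e^{-1},1)$ from $\textrm{Hilb}_k(h^L_{t,\epsilon},\chi_t)$ as in (\ref{eq_defn_hat_H}); by the $\epsilon$-geodesic equation (\ref{eq_eps_geod}) and positivity of $\omega_{u_{t,\epsilon}}$, the symbol $\omega_{H}$ attached to this family in Theorem \ref{thm_mazh} is, up to a constant, equal to $\epsilon$ times a strictly positive bounded function on $X\times[0,1]$, so that $0<c_\epsilon\le\omega_{H,\epsilon}\le C_\epsilon$. Since $T_{f,k}\ge c\cdot\mathrm{Id}$ whenever $f\ge c$, Theorem \ref{thm_mazh} shows that the curvature of $\hat H_k$ is $\ge(c_\epsilon k-C_\epsilon)\mathrm{Id}\ge 0$ once $k\ge k_0(\epsilon)$, so $\textrm{Hilb}_k(h^L_{t,\epsilon},\chi_t)$ is a superinterpolating family sharing endpoints with $H_{k,t}$, and Theorem \ref{thm_curv_geod} yields $H_{k,t}\le\textrm{Hilb}_k(h^L_{t,\epsilon},\chi_t)\le e^{k\delta_\epsilon}\textrm{Hilb}_k(h^L_t,\chi_t)$. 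For the reverse inequality I would take, as in the proof of Theorem \ref{thm_trasnfer_sm}, a smooth $S^1$-invariant strictly subharmonic $g\ge 0$ on $\mathbb D(e^{-1},1)$ vanishing on the boundary (cf. (\ref{eq_g_func_defn})); for $C_\epsilon'$ large enough the metric $\hat H_k\cdot\exp(kC_\epsilon'g)$ has curvature $\le(k(C_\epsilon-C_\epsilon'c_g)+C)\mathrm{Id}\le 0$ for $k\ge k_0'(\epsilon)$, so the associated path is a subinterpolating family with the correct endpoints, and Theorem \ref{thm_curv_geod} gives $\textrm{Hilb}_k(h^L_{t,\epsilon},\chi_t)\le\textrm{Hilb}_k(h^L_{t,\epsilon},\chi_t)\,e^{kC_\epsilon'g}\le H_{k,t}$; combined with $\textrm{Hilb}_k(h^L_{t,\epsilon},\chi_t)\ge e^{-k\delta_\epsilon}\textrm{Hilb}_k(h^L_t,\chi_t)$ this gives the lower bound. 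Choosing $\epsilon=\epsilon_k\to 0$ slowly enough that $k\ge\max(k_0(\epsilon_k),k_0'(\epsilon_k))$ for all large $k$ and setting $a_k:=k\delta_{\epsilon_k}$ (augmented by the error from the reduction step) finishes the proof, since then $a_k/k=\delta_{\epsilon_k}\to 0$.

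The hard part is that the $\epsilon$-geodesic is only a crude replacement for the genuine (at best $\mathscr C^{1,1}$) Mabuchi geodesic: in the absence of a non-collapsing lower bound for $\omega_{u_{t,\epsilon}}$, the symbol $\omega_{H,\epsilon}$ does not tend to $0$ uniformly and all the constants $c_\epsilon,C_\epsilon,C_\epsilon',k_0(\epsilon),k_0'(\epsilon)$ degenerate as $\epsilon\to 0$; one can therefore only afford to let $\epsilon=\epsilon_k$ decay slowly, which is precisely why the error is controlled at order $o(k)$ rather than $O(1)$ --- sharpening it to $O(1)$ for smooth positive endpoints is the goal of Theorem \ref{thm_trasnfer}, pursued in Section \ref{sect_transfer} by a finer, if more restricted, sub-interpolation argument. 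A secondary technical point is to make the reduction to smooth positive endpoints quantitative and compatible with the choice of $\epsilon_k$.
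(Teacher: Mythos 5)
Your upper bound (superinterpolation direction) and the reduction to smooth positive endpoints are sound and consistent with the techniques of Section \ref{sect_transfer}: for fixed $\epsilon$ the symbol $\epsilon\, d_\epsilon(\tau)$ of the curvature of $\widehat{H}_{k,\epsilon}$ is bounded below by a positive constant, so positivity for $k\geq k_0(\epsilon)$ and hence $H_{k,t}\leq {\rm Hilb}_k(h^L_{t,\epsilon},\chi_t)\leq e^{k\delta_\epsilon}{\rm Hilb}_k(h^L_t,\chi_t)$ follow, and a diagonal choice $\epsilon=\epsilon_k$ works. The genuine gap is in your lower bound. To make $\widehat{H}_{k,\epsilon}\cdot\exp(kC'_\epsilon g)$ negatively curved you must take $C'_\epsilon\gtrsim \epsilon\sup_{\tau,x} d_\epsilon(\tau)(x)$, and the resulting error in the comparison is $\exp(-kC'_\epsilon\sup|g|)$; for this to be $e^{-o(k)}$ you need $\epsilon\sup d_\epsilon\to 0$ along your sequence. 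That is exactly the missing \emph{non-collapsing estimate} discussed after (\ref{eq_r_k_eps}): nothing prevents $\sup d_\epsilon$ from blowing up faster than $1/\epsilon$ when the Mabuchi geodesic passes through non-strictly-positive metrics, in which case $\liminf_{\epsilon\to 0}\epsilon\sup d_\epsilon$ may be positive or infinite and \emph{no} choice of $\epsilon_k\to 0$, however slow, makes $C'_{\epsilon_k}\to 0$. (Letting $\epsilon_k$ decay slowly only handles the constraint $k\geq k_0(\epsilon_k)$; it does not shrink a quantity that fails to tend to $0$ in the first place.) Note also that the paper's own workaround for this obstruction in the proof of Theorem \ref{thm_trasnfer} --- restricting to a short interval $[0,t_\epsilon]$ where $d_\epsilon\leq 1$ --- both yields only the derivative at $t=0$ rather than a comparison uniform in $t\in[0,1]$, and takes Theorem \ref{thm_equiv_geod} itself as an input via (\ref{eq_geod_c0_bnd}); so importing it here would be circular. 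A minor point: your auxiliary $g$ must be $\leq 0$ on the annulus (a subharmonic function vanishing on the boundary cannot be $\geq 0$ inside), as is the paper's choice (\ref{eq_g_func_defn}).

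For the record, the theorem is not proved in this paper but quoted from \cite[Theorem 4.20]{FinSecRing}, and the remark following it indicates the actual route for the hard direction: it relies on Berndtsson's uniform convergence of $FS(H_{k,t})^{1/k}$ towards $h^L_t$, combined with the standard two-sided (Bernstein--Markov type) comparison between a Hermitian norm $H_k$ and ${\rm Hilb}_k(FS(H_k),\chi)$, which converts the $C^0$-convergence of Fubini--Study metrics into the two-sided estimate $H_{k,t}=e^{o(k)}\,{\rm Hilb}_k(h^L_t,\chi_t)$ without any sub-interpolation or curvature positivity for the family built on the (possibly collapsing) $\epsilon$-geodesic.
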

	\begin{rem}
		Berndtsson in \cite{BerndtProb}, following a previous work of Phong-Sturm \cite{PhongSturm}, proved the uniform convergence of $FS(H_{k, t})^{\frac{1}{k}}$ to $h^L_t$, cf. also \cite{DarvLuRub} for a related result. 
		Theorem \ref{thm_equiv_geod} refines this result. 
		Remark, however, that our proof in \cite{FinSecRing} relies on the results of Berndtsson.
	\end{rem}
	\begin{proof}[Proof of Theorem \ref{thm_trasnfer}]
		Let us first reduce our considerations to the case when both endpoints are smooth.
		We use the notations introduced in Proposition \ref{prop_cont_der}.
		Then by Theorem \ref{thm_interpol}, we get
		\begin{equation}\label{eq_est_der_1_0}
			T_k(h^L_0, h^L_{1, \epsilon}) - \epsilon k {\rm{Id}}
			\leq
			T_k(h^L_0, h^L_1)
			\leq
			T_k(h^L_0, h^L_{1, \epsilon})
			+ \epsilon k {\rm{Id}}.
		\end{equation}
		If we now assume that Theorem \ref{thm_trasnfer} is valid for smooth endpoints, we deduce that for any $\epsilon > 0$, there is $k_0 \in \nat$, such that for any $k \geq k_0$, we have
		\begin{equation}\label{eq_est_der_3_0}
			\Big\|
				\frac{1}{k} T_k(h^L_0, h^L_{1, \epsilon})
				-
				T_{\phi(h^L_0, h^L_{1, \epsilon}), k}
			\Big\|
			\leq
			\epsilon.
		\end{equation}
		Directly from (\ref{eq_est_der_2_0}), (\ref{eq_est_der_1_0}) and (\ref{eq_est_der_3_0}), we establish that Theorem \ref{thm_trasnfer} then holds for continuous endpoint $h^L_1$.
		It is, hence, enough to establish Theorem \ref{thm_trasnfer} when both endpoints are smooth, which we assume from now on.
		\par 
		We denote by $h^L_{t, \epsilon}$, $t \in [0, 1]$, the $\epsilon$-geodesic between $h^L_0$ and $h^L_1$ associated with an auxillary form $\omega := 2 \pi c_1(L, h^L_0)$, given by Theorem \ref{thm_exist_eps_geod}.
		We denote by $\hat{H}_{k, \epsilon}$ the rotationally-invariant Hermitian metric on the (trivial) vector bundle $H^0(X, L^{\otimes k}) \times \mathbb{D}(e^{-1}, 1)$ over $\mathbb{D}(e^{-1}, 1)$, constructed from ${\textrm{Hilb}}_k(h^L_{t, \epsilon})$, $t \in [0, 1]$, as in (\ref{eq_defn_hat_H}).
		For $\tau \in \mathbb{D}(e^{-1}, 1)$, we define $d_{\epsilon}(\tau) : X \to \real$ as follows
		\begin{equation}
			d_{\epsilon}(\tau) := \frac{1}{n + 1} \frac{c_1(L, h^L_0)^n}{c_1(L, h^L_{t, \epsilon})^n}, \quad \text{where} \quad t = |\log |\tau||.
		\end{equation}
		Directly from (\ref{eq_eps_geod}), by Theorem \ref{thm_mazh}, we deduce that for any $\epsilon > 0$, there are $C > 0$, $k_0 \in \nat$, such that the curvature $R_{k, \epsilon}$ of $\hat{H}_{k, \epsilon}$ satisfies 
		\begin{equation}\label{eq_r_k_eps}
			\Big\| \frac{\imun}{2 \pi} R_{k, \epsilon, \tau} - \epsilon k  T_{d_{\epsilon}(\tau), k} \Big\| \leq C.
		\end{equation}
		\par 
		The major technical problem is that we do not know if $d_{\epsilon}(\tau)$ stays bounded uniformly in $\tau$, as $\epsilon \to 0$.
		In fact, it is even likely that it will not stay bounded as long as the Mabuchi geodesic passes through a non strictly positive Hermitian metric.
		The absence of this \textit{non-collapsing estimates} makes it unclear if (\ref{eq_r_k_eps}) implies that $\frac{1}{k} \| R_{k, \epsilon, \tau} \|$ can be made arbitrary small uniformly in $\tau \in \mathbb{D}(e^{-1}, 1)$, for $k \in \nat$ big enough and $\epsilon > 0$ small enough.
		This, however, was crucial in the construction of subinterpolating family of metrics as described in the proof of Theorem \ref{thm_trasnfer_sm}.
		The rest of the proof will be dedicated to a trick which will overcome this problem.
		\par 
		First, remark that since $d_{\epsilon}(\tau) > 0$, we deduce from (\ref{eq_r_k_eps}) that the curvature $R_{k, \epsilon, \tau}$ eventually becomes positive, for $k$ big enough.
		From Theorem \ref{thm_curv_geod}, we then deduce that
		\begin{equation}
			{\textrm{Hilb}}_k(h^L_{t, \epsilon}) \geq H_{t, k}.
		\end{equation}
		By proceeding in exactly the same way as we did in the proof of Theorem \ref{thm_trasnfer_sm}, we then see that for any $\epsilon > 0$, there are $k_0 \in \nat$, $C > 0$, such that for any $k \geq k_0$, we have
		\begin{equation}\label{eq_transfer_first}
			 k T_{\dot{h}^L_{0, \epsilon}, k}
			 + C \cdot {\rm{Id}}
			 \geq
			 - T_k(h^L_0, h^L_1).
		\end{equation}
		\par 
		We will now show how to modify the argument from Theorem \ref{thm_trasnfer_sm} to get the lower bound for $T_k(h^L_0, h^L_1)$.
		We fix a smooth path of Kähler forms $\chi_t$, $t \in [0, 1]$ between $\chi_0$ and $\chi_1$.
		By the $\mathscr{C}^1$-convergence of $\epsilon$-geodesics to the Mabuchi geodesic from Theorem \ref{thm_exist_eps_geod}, and Theorem \ref{thm_equiv_geod}, we conclude that there is a sequence $a_k \in \real$, $k \in \nat$, such that $a_k / k \to 0$, as $k \to \infty$, and so that for any $\delta > 0$, there are $k_0 \in \nat$, $\epsilon_0 > 0$, so that for any $t \in [0, 1]$, $k \geq k_0$, $\epsilon_0 > \epsilon > 0$, we have 
		\begin{equation}\label{eq_geod_c0_bnd}
			H_{t, k}
			\geq
			{\textrm{Hilb}}_k(h^L_{t, \epsilon}, \chi_t) \cdot \exp(- \delta t k - a_k).
		\end{equation}
		\par 
		Now, since $h^L_{t, \epsilon}$, $t \in [0, 1]$, is a smooth path, and $d_{\epsilon}(1) = \frac{1}{n + 1}$, there is $t_{\epsilon} > 0$, such that $d_{\epsilon}(\tau) \leq 1$ for all $|\tau| = e^{-t}$, $t \in [0, t_{\epsilon}]$.
		Then, in the notations (\ref{eq_g_func_defn}), we deduce that there is $C > 0$, such that for any $\epsilon > 0$, there is $k_0 \in \nat$, such that the curvature of the Hermitian vector bundle 
		$\hat{H}^2_{k, \epsilon} = \hat{H}_{k, \epsilon} \cdot \exp(\epsilon C k g)$ is negative over $X \times \mathbb{D}(e^{- t_{\epsilon}}, 1)$ for any $k \geq k_0$.
		Directly from Theorem \ref{thm_curv_geod}, we deduce that the geodesic $H_{t, k}^{0}$, $t \in [0, t_{\epsilon}]$, from ${\textrm{Hilb}}_k(h^L_0, \chi_0)$ to ${\textrm{Hilb}}_k(h^L_{t_{\epsilon}, \epsilon}, \chi_{t_{\epsilon}}) \cdot \exp(\epsilon C k g(e^{-t_{\epsilon}}))$, is related with the above path for any $k \geq k_0$, $t \in [0, t_{\epsilon}]$, as follows
		\begin{equation}\label{eq_geod_c0_bnd0}
			H_{t, k}^{0}
			\geq
			{\textrm{Hilb}}_k(h^L_{t, \epsilon}, \chi_t) \cdot \exp(\epsilon C k g(e^{-t})).
		\end{equation}
		Remark that we have $H_{0, k} = H_{0, k}^{0}$, and by the negativity of $g$, (\ref{eq_geod_c0_bnd}) and (\ref{eq_geod_c0_bnd0}), we also have $H_{t_{\epsilon}, k} \geq H_{t_{\epsilon}, k}^{0} \cdot \exp (-  t_{\epsilon} \delta k -  a_k)$.
		From this and Proposition \ref{thm_interpol}, we conclude that for any $t \in [0, t_{\epsilon}]$, $k \geq k_0$, we have
		\begin{equation}\label{eq_geod_c0_bnd2}
			H_{t, k}
			\geq
			H_{t, k}^{0} 
			\cdot 
			\exp (-  t \delta k -  t a_k / t_{\epsilon}).
		\end{equation}
		A combination of (\ref{eq_geod_c0_bnd0}) and (\ref{eq_geod_c0_bnd2}) yields that for any $t \in [0, t_{\epsilon}]$, $k \geq k_0$, we have
		\begin{equation}\label{eq_geod_c0_bnd3}
			H_{t, k}
			\geq
			{\textrm{Hilb}}_k(h^L_{t, \epsilon}, \chi_t) \cdot 
			\exp (\epsilon C k g(e^{-t}) -  t \delta k -  t a_k / t_{\epsilon}  ).
		\end{equation}
		If we then take a derivative at $t = 0$ of (\ref{eq_geod_c0_bnd3}), by (\ref{eq_est_der_3}), we immediately get that for any $\delta > 0$, there is $\epsilon_0 > 0$ so that for any $\epsilon_0 > \epsilon > 0$, there are $t_{\epsilon} > 0$, $k_0 \in \nat$, so that for any $k \geq k_0$, we have
		\begin{equation}\label{eq_transfer_second}
			- T_k(h^L_0, h^L_1)
			\geq
			k T_{\dot{h}^L_{0, \epsilon}, k}
			-
			\Big(
			\epsilon C k g'(1) + \delta k + a_k / t_{\epsilon}  - \inf \big( \chi_0^{-1} \frac{d}{dt} \chi_t|_{t = 0} \big)
			\Big)
			{\rm{Id}}.
		\end{equation}
		A combination of (\ref{eq_transfer_first}) and (\ref{eq_transfer_second}) finally yields the result by the uniform convergence of $\dot{h}^L_{0, \epsilon}$ towards $-\phi(h^L_0, h^L_1)$ from Theorem \ref{thm_exist_eps_geod}.
	\end{proof}

\section{Functional calculus for Toeplitz operators}\label{sect_func_calcul}
The primary objective of this section is to demonstrate that the convergence of operators in various norms leads to the convergence of their diagonal kernels, whereas the converse typically does not hold (unless the sequence of operators forms a Toeplitz operator).
As a consequence of our considerations, we show that the vector space of Toeplitz operators of Schatten class form an algebra closed by the functional calculus.
Through this, we demonstrate that Theorem \ref{thm_berg_conv} becomes a consequence of Theorems \ref{thm_main1}, \ref{thm_main2} if one replaces the pointwise convergence by the convergence in $L^p(X)$-spaces, for any $p \in [1, +\infty[$.
\par 
Throughout the whole section we fix a positive Hermitian metric $h^L$ on $L$, and a sequence $T_k \in {\rm{End}}(H^0(X, L^{\otimes k}))$ of Hermitian operators (with respect to ${\textrm{Hilb}}_k(h^L)$).
Recall that $N_k := \dim H^0(X, L^{\otimes k})$.
We denote the associated volume form $d \nu := \frac{1}{\int c_1(L)^n} c_1(L, h^L)^n$.
We use the notations $\| \cdot \|$, $\| \cdot \|_p$, $p \in [1, +\infty[$, for the norms introduced in Definition \ref{defn_toepl}, \ref{defn_toepl_sch}.
	\par 
	Let us first discuss the relation between the norm $\| \cdot \|$ and the diagonal kernel. 
	\begin{prop}\label{prop_c0_bnd}
		For any $x \in X$, we have $| T_k(x) | \leq B_k(x) \cdot \| T_k \|$.
		While this inequality is sharp, already for $(X, L) := (\mathbb{P}^1, \mathscr{O}(1))$, for any $C > 0$, there are $k \in \nat$, $T_k \in {\rm{End}}(H^0(X, L^{\otimes k}))$ so that $| T_k(x) | \leq B_k(x)$, for any $x \in X$, but $\| T_k \| \geq C$.
	\end{prop}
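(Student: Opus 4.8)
The plan is to establish the three assertions of the proposition separately: the inequality $|T_k(x)|\leq B_k(x)\,\|T_k\|$, its sharpness, and the counterexample on $(\mathbb{P}^1,\mathscr{O}(1))$. Nothing here is deep; the only ingredient that is not purely formal is a standard binomial asymptotic appearing at the very end.

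For the inequality, the key first step is to rewrite the diagonal kernel at a point as a single quadratic form of $T_k$ via the coherent state. If $B_k(x)=0$, then every section of $L^{\otimes k}$ vanishes at $x$, hence so does $T_k(x)$, and the bound is trivial. Otherwise I fix a unit vector $e\in L_x^{\otimes k}$ and introduce $\sigma_x\in H^0(X,L^{\otimes k})$, the Riesz representative for ${\rm{Hilb}}_k(h^L)$ of the functional $s\mapsto\langle s(x),e\rangle_{h^{L^{\otimes k}}}$; expanding in the orthonormal basis gives $\sigma_x=\sum_i\overline{\langle s_{i,k}(x),e\rangle}\,s_{i,k}$. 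Since $L_x^{\otimes k}$ is one-dimensional and $|e|=1$, one has $|\langle s_{i,k}(x),e\rangle|=|s_{i,k}(x)|_{h^{L^{\otimes k}}}$, so $\|\sigma_x\|^2_{{\rm{Hilb}}_k(h^L)}=B_k(x)$; and writing $s_{i,k}(x)=\langle s_{i,k}(x),e\rangle\,e$ together with the reproducing property of $\sigma_x$ applied to $T_k\sigma_x$ yields the identity $T_k(x)=\langle T_k\sigma_x,\sigma_x\rangle_{{\rm{Hilb}}_k(h^L)}$. The operator-norm estimate then gives $|T_k(x)|\leq\|T_k\|\cdot\|\sigma_x\|^2=\|T_k\|\,B_k(x)$. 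Sharpness is immediate: for $T_k={\rm{Id}}$ one has $T_k(x)=B_k(x)$ and $\|T_k\|=1$, so equality holds at every $x$.

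For the counterexample I work on $(\mathbb{P}^1,\mathscr{O}(1))$ with the Fubini--Study data of Example~1, so that by (\ref{eq_l2_norm_calc}) a ${\rm{Hilb}}_k(h^{FS})$-orthonormal basis of $H^0(\mathbb{P}^1,\mathscr{O}(k))$ is given by the normalized monomials $s_{i,k}=\sqrt{(k+1)\binom{k}{i}}\,x^i y^{k-i}$, and $B_k\equiv k+1$. For even $k$ I take $T_k:=\beta_k P_k$, where $P_k\in{\rm{End}}(H^0(\mathbb{P}^1,\mathscr{O}(k)))$ is the orthogonal projection onto the line $\mathbb{C}\,s_{k/2,k}$ and $\beta_k:=(k+1)/M_k$ with $M_k:=\max_{\mathbb{P}^1}|s_{k/2,k}(\cdot)|^2_{h^{L^{\otimes k}}}$. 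Then $\|T_k\|=\beta_k$, while $T_k(\cdot)=\beta_k|s_{k/2,k}(\cdot)|^2_{h^{L^{\otimes k}}}$ satisfies $0\leq T_k(\cdot)\leq\beta_k M_k=k+1=B_k$, so the diagonal bound holds everywhere. It remains to compute $M_k$: at $[a:b]\in\mathbb{P}^1$ one has $|s_{k/2,k}([a:b])|^2_{h^{L^{\otimes k}}}=(k+1)\binom{k}{k/2}\,|a|^k|b|^k/(|a|^2+|b|^2)^k$, whose maximum is attained at $|a|=|b|$ and equals $(k+1)\binom{k}{k/2}\,2^{-k}$; hence $\beta_k=2^k/\binom{k}{k/2}$, which tends to $+\infty$ as $k\to\infty$ by Stirling's formula (in fact $\beta_k\geq\sqrt{k}$). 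Thus, given $C>0$, one chooses an even $k$ with $\sqrt{k}\geq C$ and the corresponding $T_k$.

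I do not anticipate any genuine obstacle: the only computation with content is the evaluation of $M_k$ and the elementary asymptotics of $2^k/\binom{k}{k/2}$, and the only point deserving care is the identity $T_k(x)=\langle T_k\sigma_x,\sigma_x\rangle$, including the degenerate case $B_k(x)=0$. The conceptual remark worth including alongside the proof is that this identity, with $\|\sigma_x\|^2=B_k(x)$, is precisely what makes the inequality transparent and also explains why the converse fails so badly: as $k\to\infty$ the normalized coherent states $\sigma_x/\|\sigma_x\|$ fill out only a thin subset of the unit sphere of $H^0(X,L^{\otimes k})$, so an operator can be uniformly small on all of them while having arbitrarily large operator norm.
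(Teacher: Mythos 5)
Your proposal is correct and follows essentially the same route as the paper: the identity $T_k(x)=\langle T_k\sigma_x,\sigma_x\rangle$ with $\|\sigma_x\|^2=B_k(x)$ is exactly the paper's peak-section identity (\ref{eq_bergm_kern_ident}) (the peak section being the normalized coherent state), and the counterexample is the same mechanism — a large multiple of the rank-one projection onto the middle monomial $x^{k/2}y^{k/2}$, whose diagonal kernel has sup of order $k^{1/2}$ below $B_k$ — with only a cosmetic difference in normalization (the paper uses $\tfrac12(B_{2k}+\sqrt{k}\,P_{2k})$, you use $\beta_k P_k$ with $\beta_k=2^k/\binom{k}{k/2}$). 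All computations check out.
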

	\begin{proof}
		First of all, by the independence of the expression like in (\ref{eq_weight_berg}) on the choice of orthonormal basis, it is immediate that 
		\begin{equation}\label{eq_bergm_kern_ident}
			T_k(x)
			=
			\big\langle T_k s_{x, k}, s_{x, k} \big\rangle_{{\textrm{Hilb}}_k(h^L)} \cdot B_k(x),
		\end{equation}
		where $s_{x, k} \in H^0(X, L^{\otimes k})$ is a peak section at $x \in X$, which is a section of norm $1$ with respect to ${\textrm{Hilb}}_k(h^L)$, orthogonal to all sections from $H^0(X, L^{\otimes k})$ vanishing at $x$.
		This shows the first part of Proposition \ref{prop_c0_bnd} by the trivial inequality $\langle T_k s_{x, k}, s_{x, k} \rangle_{{\textrm{Hilb}}_k(h^L)} \leq \| T_k \|$.
		The sharpness of the established inequality is trivial for $T_k := B_k$.
		\par 
		For the second part of the statement, we assume $(X, L) := (\mathbb{P}^1, \mathscr{O}(1))$ is endowed with the Fubini-Study metric as described in the example in the end of Section \ref{sect_prel}, from which we also borrow the notation.
		We consider an operator $T_{2k} := \frac{1}{2} ( B_{2k} + \sqrt{k} \cdot P_{2k})$, where $P_{2k}$ is the orthogonal projection from $H^0(X, L^{\otimes 2k})$ to the subspace spanned by $x^k \cdot y^k$ in the notations from (\ref{eq_l2_norm_calc}).
		Clearly, $\| T_{2k} \| = \frac{ \sqrt{k} + 1}{2}$.
		Directly from (\ref{eq_l2_norm_calc}), we see that 
		\begin{equation}
			T_{2 k}([a x^* + b y^*])
			=
			\frac{2k + 1}{2}
			+ 
			\frac{\sqrt{k}}{2}
			\Big( \frac{|a| |b|}{|a|^2 + |b|^2} \Big)^{2k} \frac{(2 k + 1)!}{k! k!}.
		\end{equation}
		Directly from Stirling's approximation and inequality $|a|^2 + |b|^2 \geq 2 |a| |b|$, we deduce that for any $x \in \mathbb{P}^1$, $|T_{2k}(x)| \leq 2k + 1$, finishing the proof.
	\end{proof}
	Let us now discuss the relation between the norms $\| \cdot \|_p$, $p \in [1, +\infty[$, and the diagonal kernel. 
	\begin{prop}\label{prop_l1_bnd}
		For any $\epsilon > 0$, there is $k_0 \in \nat$, such that for any $k \geq k_0$, $p \in [1, +\infty[$, we have 
		\begin{equation}\label{eq_l1_bnd}
			\sqrt[p]{ \int |T_k(x)|^p d \nu(x) } \leq \| T_k \|_p \cdot k^n \cdot (1 + \epsilon).
		\end{equation}
		While this inequality is sharp, already for $(X, L) := (\mathbb{P}^1, \mathscr{O}(1))$, there are $T_k \in {\rm{End}}(H^0(X, L^{\otimes k}))$, $k \in \nat$, so that $\| T_k \| = \| T_k \|_p = 1$, for any $p \in [1, +\infty[$, but for any $\epsilon > 0, p \in [1, +\infty[$, there is $k_0 \in \nat$, such that for any $k \geq k_0$, we have $\int |T_k(x)|^p d \nu(x) \leq \epsilon \cdot k^{np}$.
	\end{prop}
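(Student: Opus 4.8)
The plan is to deduce the inequality \eqref{eq_l1_bnd} from the spectral decomposition of $T_k$, Jensen's inequality, and the uniform form of Tian's asymptotics \eqref{eq_thm_tian}; the two sharpness claims are then settled by explicit examples — the identity operator for the inequality itself, and a pull-back by an involution on $\mathbb{P}^1$ for the failure of the reverse bound.

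For \eqref{eq_l1_bnd}, since $T_k$ is Hermitian I would pick an orthonormal basis $e_{1, k}, \ldots, e_{N_k, k}$ of $(H^0(X, L^{\otimes k}), \textrm{Hilb}_k(h^L))$ consisting of eigenvectors of $T_k$, with eigenvalues $\lambda_j \in \real$. By definition of the diagonal kernel, $T_k(x) = \sum_j \lambda_j |e_{j, k}(x)|_{h^{L^{\otimes k}}}^2$, and $|T_k(x)| \leq \sum_j |\lambda_j| \, |e_{j, k}(x)|_{h^{L^{\otimes k}}}^2$, the latter being the diagonal kernel of $|T_k|$, which has the same Schatten norms; so one may assume $\lambda_j \geq 0$. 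For $k$ large $L^{\otimes k}$ is very ample, hence $B_k(x) > 0$ everywhere, and the numbers $p_j(x) := |e_{j, k}(x)|_{h^{L^{\otimes k}}}^2 / B_k(x)$ form a probability vector. Jensen's inequality applied to the convex function $t \mapsto t^p$, $p \geq 1$, then gives
\[
	T_k(x)^p = B_k(x)^p \Big( \sum_j p_j(x) \lambda_j \Big)^p \leq B_k(x)^p \sum_j p_j(x) \lambda_j^p = B_k(x)^{p - 1} \sum_j \lambda_j^p \, |e_{j, k}(x)|_{h^{L^{\otimes k}}}^2 .
\]
Integrating against $d \nu$ and using $\int_X |e_{j, k}(x)|_{h^{L^{\otimes k}}}^2 \, d \nu(x) = n! / \int c_1(L)^n$, a restatement of $\langle e_{j, k}, e_{j, k} \rangle_{\textrm{Hilb}_k(h^L)} = 1$, I obtain
\[
	\int_X |T_k(x)|^p \, d \nu(x) \leq \Big( \sup_X B_k \Big)^{p - 1} \cdot \frac{n!}{\int c_1(L)^n} \sum_j \lambda_j^p = \Big( \sup_X B_k \Big)^{p - 1} \cdot \frac{n!}{\int c_1(L)^n} \cdot N_k \, \| T_k \|_p^p .
\]
By \eqref{eq_thm_tian}, for every $\delta > 0$ there is $k_0$, \emph{not depending on} $p$, such that $\sup_X B_k \leq (1 + \delta) k^n$ and $N_k = \frac{1}{n!} \int_X B_k \, c_1(L, h^L)^n \leq (1 + \delta) k^n \int c_1(L)^n / n!$ for all $k \geq k_0$. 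Plugging these in, the right-hand side is at most $(1 + \delta)^p k^{n p} \| T_k \|_p^p$; taking $p$-th roots yields \eqref{eq_l1_bnd} with $1 + \delta$ in place of $1 + \epsilon$, the $p$-th power and the $p$-th root cancelling so that the constant is $p$-independent, and choosing $\delta = \epsilon$ concludes this part.

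For the sharpness of \eqref{eq_l1_bnd}, the identity operator $T_k = {\rm Id}$ has $\| T_k \|_p = 1$ and $T_k(x) = B_k(x)$, so the left-hand side of \eqref{eq_l1_bnd} equals $\big( \int_X (B_k / k^n)^p \, d \nu \big)^{1 / p} k^n$, which by \eqref{eq_thm_tian} tends to $k^n$ as $k \to \infty$, uniformly in $p$; hence the factor $k^n$ is unavoidable and $1 + \epsilon$ cannot be replaced by any constant $< 1$. For the counterexample to the reverse bound I take $(X, L) = (\mathbb{P}^1, \mathscr{O}(1))$ with the Fubini--Study data from Section \ref{sect_prel}, and let $T_k \in \enmr{H^0(\mathbb{P}^1, \mathscr{O}(k))}$ be the automorphism $s \mapsto s \circ \iota$ induced by the holomorphic involution $\iota \colon [a x^* + b y^*] \mapsto [a x^* - b y^*]$. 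As $\iota$ is an isometry of the Fubini--Study metric and $T_k^2 = {\rm Id}$, the operator $T_k$ is Hermitian and unitary, so $\| T_k \| = 1$ and $|T_k| = {\rm Id}$, whence $\| T_k \|_p = 1$ for all $p$. On monomials $T_k (x^i y^{k - i}) = (-1)^{k - i} x^i y^{k - i}$, so the same computation that gives $B_k = k + 1$ in Section \ref{sect_prel} yields, via \eqref{eq_l2_norm_calc},
\[
	T_k \big( [a x^* + b y^*] \big) = (k + 1) \Big( \frac{|a|^2 - |b|^2}{|a|^2 + |b|^2} \Big)^{k} ,
\]
and the substitution $t = (|a|^2 - |b|^2) / (|a|^2 + |b|^2)$ turns $\int_{\mathbb{P}^1} |T_k(x)|^p \, d \nu(x)$ into $(k + 1)^p / (k p + 1)$, which for each fixed $p$ is $o(k^p)$ as $k \to \infty$, i.e.\ $\leq \epsilon k^{n p} = \epsilon k^p$ once $k$ is large enough in terms of $\epsilon$ and $p$.

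The only genuinely delicate point is obtaining the multiplicative constant $1 + \epsilon$ together with a threshold $k_0$ independent of $p$: the Jensen step unavoidably produces the factor $B_k(x)^{p - 1}$, which the uniform estimate $B_k \leq (1 + \epsilon) k^n$ converts into $\big( (1 + \epsilon) k^n \big)^{p - 1}$, and the extra $p$-th power thus introduced is exactly compensated when one extracts the $p$-th root at the end. Everything else is careful bookkeeping of the normalizations of $d \nu$, of $\| \cdot \|_p$, and of the asymptotics $N_k \sim k^n \int c_1(L)^n / n!$ coming from \eqref{eq_thm_tian}.
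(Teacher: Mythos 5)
Your proof is correct, and for the main inequality \eqref{eq_l1_bnd} it takes a genuinely different route from the paper. The paper first proves the two endpoint cases --- $p = 1$ via the trace identity $\int T_k(x)\, d\nu \cdot \int c_1(L)^n/n! = {\rm Tr}(T_k)$ for positive $T_k$ (plus the decomposition $T_k = T_k^+ - T_k^-$), and $p = \infty$ via Proposition \ref{prop_c0_bnd} --- and then obtains all intermediate $p$ by complex interpolation, invoking that the Schatten spaces $({\rm{End}}, \|\cdot\|_p)$ and the $L^p(X)$ spaces are complex interpolation scales and that complex interpolation is an exact functor. Your argument replaces this machinery by a single application of Jensen's inequality to the probability weights $p_j(x) = |e_{j,k}(x)|^2/B_k(x)$ in the spectral decomposition, after the harmless reduction to $|T_k|$; the factor $B_k(x)^{p-1}$ produced by Jensen is exactly absorbed by Tian's uniform estimate \eqref{eq_thm_tian}, and the $p$-th root at the end makes the threshold $k_0$ independent of $p$, as required. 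This is more elementary and self-contained (no interpolation theory), at the cost of being tied to the specific convex function $t \mapsto t^p$, whereas the paper's interpolation argument is structural and would transfer to other interpolation scales. Your two examples coincide with the paper's: the identity operator for sharpness, and your involution-induced operator $x^i y^{k-i} \mapsto (-1)^{k-i} x^i y^{k-i}$ is, up to the global sign $(-1)^k$ (irrelevant for $|T_k(x)|$ and for all the norms), the paper's diagonal operator $x^i y^j \mapsto (-1)^i x^i y^j$; your closed form $(k+1)^p/(kp+1)$ for the integral is a slightly more explicit version of the paper's limit computation.
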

	\begin{proof}
		Let us first establish the inequality 
		\begin{equation}\label{eq_tk_l1_nm}
			\int |T_k(x)| d \nu(x) \cdot \frac{\int c_1(L)^n}{n!} \leq \| T_k \|_1 \cdot N_k.
		\end{equation}
		If $T_k$ is positive-definite, (\ref{eq_tk_l1_nm}) immediately follows, as we have $\int T_k(x) d \nu(x) \cdot \int c_1(L)^n / n! = {\rm{Tr}}(T_k) = \| T_k \|_1 \cdot N_k$. 
		By decomposing $T_k$ as $T_k := T_k^+ - T_k^-$, where $T_k^+$, $T_k^-$ is a positive and negative parts of $T_k$ respectively, it is then immediate that (\ref{eq_tk_l1_nm}) holds in full generality.
		\par 
		From (\ref{eq_thm_tian}) and the asymptotic Riemann-Roch-Hirzebruch theorem, saying that $N_k \sim k^n \cdot \int c_1(L)^n / n!$, we see that (\ref{eq_tk_l1_nm}) refines (\ref{eq_l1_bnd}) for $p = 1$. 
		Moreover, by Proposition \ref{prop_c0_bnd}, we see that (\ref{eq_l1_bnd}) also holds for $p \to +\infty$.
		We establish the general case by interpolation.
		\par 
		We consider a map $\pi_k : {\rm{End}}(H^0(X, L^{\otimes k})) \to L^{\infty}(X)$, $T_k \mapsto (x \mapsto T_k(x))$.
		For $p \in [1, +\infty[$, we denote by $\| \pi_k \|_p$ the norm of $\pi_k$, viewed as an operator from $({\rm{End}}(H^0(X, L^{\otimes k})), \| \cdot \|_p)$ to $(L^{\infty}(X), \| \cdot \|_{L^p(X)})$.
		Clearly, (\ref{eq_l1_bnd}) just says that for any $\epsilon > 0$, there is $k_0 \in \nat$, so that for any $k \geq k_0$, $p \in [1, +\infty[$, we have $\| \pi_k \|_p \leq k^n \cdot (1 + \epsilon)$.
		Recall that $({\rm{End}}(H^0(X, L^{\otimes k})), \| \cdot \|_p)$ is a complex interpolation between $({\rm{End}}(H^0(X, L^{\otimes k})), \| \cdot \|_1)$ and $({\rm{End}}(H^0(X, L^{\otimes k})), \| \cdot \|)$, cf. \cite[Theorem 13.1]{GohbergKrein}.
		Similarly, $(L^{\infty}(X), \| \cdot \|_{L^p(X)})$ is a complex interpolation between $(L^{\infty}(X), \| \cdot \|_{L^1(X)})$ and $(L^{\infty}(X), \| \cdot \|_{L^{\infty}(X)})$, cf. \cite[Theorem 5.1.1]{InterpSp}.
		The result now follows directly from the fact that complex interpolation is an example of an \textit{exact interpolation functor}, see \cite[Theorem 4.1.2]{InterpSp} for the proof of this result and \cite[(6) on p.27]{InterpSp} for the necessary definitions.
		The sharpness of (\ref{eq_l1_bnd}) is again easily seen for $T_k := B_k$ by (\ref{eq_thm_tian}).
		\par 
		For the second part of the statement, we assume $(X, L) := (\mathbb{P}^1, \mathscr{O}(1))$ is endowed with the Fubini-Study metric as described in the example in the end of Section \ref{sect_prel}, from which we also borrow the notation.
		We consider an operator $T_k$ which is diagonal in the monomial basis, and which sends $x^i \cdot y^j$ to $(-1)^i x^i \cdot y^j$ in the notations (\ref{eq_l2_norm_calc}). 
		Clearly, we have $\| T_k \| = \| T_k \|_p = 1$ for any $p \in [1, +\infty[$, $k \in \nat$.
		Directly from (\ref{eq_l2_norm_calc}) and binomial identity, we see that 
		\begin{equation}
			T_k([a x^* + b y^*])
			=
			(k + 1) \cdot 
			\Big(\frac{|b|^2 - |a|^2}{|a|^2 + |b|^2}\Big)^k.
		\end{equation}
		It is immediate to see that for any $p \in [1, +\infty[$, we have $\lim_{k \to \infty} \int \big| \frac{1 - |z|^2}{1 + |z|^2} \big|^{pk} \frac{\imun dz d \overline{z}}{1 + |z|^2} = 0$, which implies that $T_k$ verifies the assumptions from Proposition \ref{prop_l1_bnd}.
	\end{proof}
	\par 
	Let us now discuss the diagonal kernels for Toeplitz operators and show that the subtleties pointed out in the second parts of Propositions \ref{prop_c0_bnd}, \ref{prop_l1_bnd} disappear for these operators.
	More precisely, we will establish the following two results.
	\begin{thm}\label{thm_toepl_refines}
		For any Toeplitz operator  (resp. of Schatten class) $\{ T_k \}_{k = 0}^{+\infty}$ with symbol $f \in \mathscr{C}^0(X)$ (resp. $f \in L^{\infty}(X)$), the sequence of functions $x \mapsto \frac{1}{k^n} T_k(x)$, $x \in X$, is uniformly bounded and converges uniformly (resp. in $L^p(X)$ for every $p \in [1, +\infty[$) to $f$.
	\end{thm}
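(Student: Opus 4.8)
The plan is to reduce, in both cases, to the asymptotics of the diagonal kernel of the model operators $T_{f,k}=B_k\circ M_{f,k}$, and to handle those by writing $\tfrac1{k^n}T_{f,k}(x)$ as an integral of $f$ against a nonnegative measure built from $B_k(x,\cdot)$ that concentrates on the diagonal. First, uniform boundedness of $x\mapsto\tfrac1{k^n}T_k(x)$ is immediate from Proposition \ref{prop_c0_bnd}: $|T_k(x)|\leq B_k(x)\|T_k\|$, where $\tfrac1{k^n}B_k(x)\to1$ uniformly by \eqref{eq_thm_tian}, and $\|T_k\|$ is uniformly bounded — because $\|T_k\|\leq\|T_{f,k}\|+\|T_k-T_{f,k}\|\leq\|f\|_{L^\infty}+o(1)$ in the Toeplitz case (using the elementary $\|B_k M_{f,k}\|\leq\|f\|_{L^\infty}$) and by definition in the Schatten case. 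Then I split $\tfrac1{k^n}T_k=\tfrac1{k^n}(T_k-T_{f,k})+\tfrac1{k^n}T_{f,k}$: in the Toeplitz case $\tfrac1{k^n}|T_k(x)-T_{f,k}(x)|\leq\tfrac{B_k(x)}{k^n}\|T_k-T_{f,k}\|\to0$ uniformly by Proposition \ref{prop_c0_bnd} and \eqref{eq_thm_tian}, and in the Schatten case $\|\tfrac1{k^n}(T_k-T_{f,k})\|_{L^p(d\nu)}\leq(1+\epsilon)\|T_k-T_{f,k}\|_p\to0$ by Proposition \ref{prop_l1_bnd}. So it remains to show $\tfrac1{k^n}T_{f,k}\to f$, uniformly when $f\in\mathscr{C}^0(X)$ and in every $L^p(X)$ when $f\in L^\infty(X)$.

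For the continuous case, a direct computation with the reproducing property of $B_k$ gives $T_{f,k}(x)=\tfrac1{n!}\int_X f(y)\,|B_k(x,y)|^2_{h^{L^{\otimes k}}}\,c_1(L,h^L)^n(y)$, so that $\tfrac1{k^n}T_{f,k}(x)=\int_X f\,d\mu_{x,k}$ with $d\mu_{x,k}(y):=\tfrac1{k^n n!}|B_k(x,y)|^2_{h^{L^{\otimes k}}}c_1(L,h^L)^n(y)$ a nonnegative measure of total mass $\mu_{x,k}(X)=\tfrac1{k^n}B_k(x)\to1$ uniformly. The one genuinely analytic input is the off-diagonal decay of the Bergman kernel, e.g.\ $|B_k(x,y)|^2_{h^{L^{\otimes k}}}\leq Ck^n\exp(-c\sqrt{k}\,\dist(x,y))$, cf.\ \cite{MaHol}, which gives $\sup_{x\in X}\mu_{x,k}(\{y:\dist(x,y)\geq\delta\})\to0$ for every $\delta>0$. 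Fixing $\eta>0$ and choosing $\delta>0$ with $|f(x)-f(y)|\leq\eta$ whenever $\dist(x,y)<\delta$ (uniform continuity), and splitting $\int(f(y)-f(x))\,d\mu_{x,k}(y)$ over $\{\dist(\cdot,x)<\delta\}$ and its complement while accounting for $f(x)(\mu_{x,k}(X)-1)$, yields $|\tfrac1{k^n}T_{f,k}(x)-f(x)|\leq\eta\,\mu_{x,k}(X)+2\|f\|_{L^\infty}\sup_x\mu_{x,k}(\{\dist(\cdot,x)\geq\delta\})+\|f\|_{L^\infty}\sup_x|\mu_{x,k}(X)-1|$, which is $\leq2\eta$ for $k$ large, uniformly in $x$; this proves the first assertion.

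For $f\in L^\infty(X)$, I argue by density. Given $p\in[1,+\infty[$ and $\eta>0$, pick $g\in\mathscr{C}^0(X)$ with $\|f-g\|_{L^p(d\nu)}<\eta$ and write $\tfrac1{k^n}T_{f,k}-f=\tfrac1{k^n}T_{f-g,k}+(\tfrac1{k^n}T_{g,k}-g)+(g-f)$; the middle term tends to $0$ in $L^p$ by the continuous case (uniform convergence on the compact $X$) and the last has $L^p$-norm $<\eta$. For the first, Proposition \ref{prop_l1_bnd} gives $\|\tfrac1{k^n}T_{f-g,k}\|_{L^p(d\nu)}\leq(1+\epsilon)\|T_{f-g,k}\|_p$, so it suffices that $\psi\mapsto T_{\psi,k}$ maps $L^p(X,d\nu)$ into the $p$-Schatten class with norm $\leq1+\epsilon$ for $k$ large; this follows, exactly as in the proof of Proposition \ref{prop_l1_bnd} (exact complex interpolation), from the endpoint bounds $\|T_{\psi,k}\|_\infty=\|T_{\psi,k}\|\leq\|\psi\|_{L^\infty}$ and $\|T_{\psi,k}\|_1\leq\tfrac1{N_k}\int_X|\psi|\,B_k\,c_1(L,h^L)^n/n!\to\|\psi\|_{L^1(d\nu)}$, the latter obtained by writing a real $\psi$ as a difference of its positive and negative parts (so $T_{\psi,k}$ is self-adjoint with $\tr{T_{\psi,k}}=\tfrac1{n!}\int_X\psi\,B_k\,c_1(L,h^L)^n$), invoking \eqref{eq_thm_tian} and $N_k\sim k^n\int c_1(L)^n/n!$, and then passing to complex $\psi$. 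Letting $\eta\to0$ finishes the proof.

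The main obstacle — indeed the only place where Bergman-kernel analysis beyond Tian's $\mathscr{C}^0$-estimate \eqref{eq_thm_tian} enters — is the concentration statement $\sup_x\mu_{x,k}(\{\dist(\cdot,x)\geq\delta\})\to0$ in the continuous case; everything else is soft, relying on Propositions \ref{prop_c0_bnd} and \ref{prop_l1_bnd}, uniform continuity, density of $\mathscr{C}^0(X)$ in $L^p(X)$, and exact complex interpolation. Since the off-diagonal exponential decay of $B_k(x,y)$ is classical and recorded in \cite{MaHol}, I would simply quote it rather than reprove it.
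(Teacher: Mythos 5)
Your reduction is exactly the paper's: split $T_k=(T_k-T_{f,k})+T_{f,k}$, kill the first term uniformly via Proposition \ref{prop_c0_bnd} (Toeplitz case) or in $L^p$ via Proposition \ref{prop_l1_bnd} (Schatten case), and then prove that $\tfrac1{k^n}T_{f,k}(x)\to f$. The only place you diverge is in that last step, which the paper isolates as Lemma \ref{lem_conv_kernel}. For continuous $f$ your argument (concentration of the probability-like measures $\mu_{x,k}$ via off-diagonal decay, plus uniform continuity) is the standard one the paper delegates to \cite{BarrMa} in Remark \ref{rem_conv_kernel}, so no difference there. For $f\in L^\infty$ the paper proves $L^p$-convergence directly for all $f\in L^p$ by comparing $\tfrac1{k^n}T_{f,k}$ with the Gaussian smoothing operator $G_{f,k}$ using the full near-diagonal Dai--Liu--Ma expansion (\ref{eq_dai_liu_ma}) and the Generalized Young inequality, which yields an explicit $O(k^{-1/2})$ rate in $L^p\to L^p$ operator norm; you instead run a soft density argument, approximating $f$ by a continuous $g$ in $L^p$ and controlling $\|\tfrac1{k^n}T_{f-g,k}\|_{L^p}$ by the same exact complex interpolation between the $\|T_{\psi,k}\|_1\lesssim\|\psi\|_{L^1}$ and $\|T_{\psi,k}\|\leq\|\psi\|_{L^\infty}$ endpoints that the paper uses in Proposition \ref{prop_l1_bnd}. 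Your route needs only the crude off-diagonal decay rather than the near-diagonal asymptotics, at the cost of losing any convergence rate; both are correct, and all the auxiliary bounds you invoke (total mass $\mu_{x,k}(X)=\tfrac1{k^n}B_k(x)\to1$, the endpoint Schatten estimates, uniform boundedness of $\|T_k\|$) check out.
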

	\begin{thm}\label{thm_toepl_norm}
		For any Toeplitz operator of Schatten class $\{ T_k \}_{k = 0}^{+\infty}$, with symbol $f \in L^{\infty}(X)$, for any $p \in [1, +\infty[$, we have
		\begin{equation}
			\lim_{k \to \infty} \| T_k \|_p
			=
			\sqrt[p]{  \int |f(x)|^p d \nu(x) }
			=
			\lim_{k \to \infty}
			\frac{1}{k^n}
			\sqrt[p]{  \int |T_k(x)|^p d \nu(x) }.
		\end{equation}
		Similarly, for any Toeplitz operator $\{ T_k \}_{k = 0}^{+\infty}$, with symbol $f$, we have $\lim_{k \to \infty} \| T_k \| = {\rm{sup}} |f(x)| = \lim_{k \to \infty} \frac{1}{k^n} {\rm{sup}} | T_k(x) |$.
	\end{thm}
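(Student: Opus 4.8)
The plan is to prove each of the three equalities by a two-sided estimate, after some bookkeeping. Recall that ${\rm{Hilb}}_k(h^L)$ is the $L^2$-product, that $B_k$ is the orthogonal projection of $L^2(X, L^{\otimes k})$ onto $H^0(X, L^{\otimes k})$, and that the diagonal kernel of $T_{g,k} = B_k M_{g,k}$ equals $g \cdot B_k$, so that ${\rm{Tr}}[T_{g,k}] = \frac{1}{n!}\int g(x) B_k(x)\, c_1(L, h^L)^n$ for every $g \in L^\infty(X)$. Since the $T_k$ are Hermitian (the standing assumption of this section), $\| T_{\Im f, k} \|_1 = \tfrac12 \| T_{f,k} - T_{f,k}^* \|_1 \leq \| T_k - T_{f,k} \|_1 \to 0$, so, the diagonal kernel of $T_{\Im f,k}$ being $(\Im f)\cdot B_k$, Proposition \ref{prop_l1_bnd} and (\ref{eq_thm_tian}) force $\Im f = 0$ almost everywhere; hence we may assume $f$ real. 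Finally, the rightmost equality in the displayed formula is immediate from Theorem \ref{thm_toepl_refines}: the functions $\frac{1}{k^n}T_k(\cdot)$ are uniformly bounded and converge to $f$ in $L^p(X)$, so their $L^p(d\nu)$-norms converge to $\sqrt[p]{\int |f|^p d\nu}$; likewise $\frac{1}{k^n}\sup_x |T_k(x)| \to \sup|f|$ in the continuous case.

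Next I would prove $\lim_k \| T_k \|_p = \sqrt[p]{\int |f|^p d\nu}$ for fixed $p \in [1,+\infty[$. The bound ``$\geq$'' is free: by Proposition \ref{prop_l1_bnd} there is, for each $\epsilon > 0$, a $k_0$ (independent of $p$) with $\frac{1}{k^n}\sqrt[p]{\int |T_k(x)|^p d\nu} \leq (1+\epsilon)\| T_k \|_p$ for $k \geq k_0$; letting $k \to \infty$, using the rightmost equality, then $\epsilon \to 0$ gives $\sqrt[p]{\int |f|^p d\nu} \leq \liminf_k \| T_k \|_p$. For ``$\leq$'', the Schatten-class hypothesis gives $\| T_k \|_p \leq \| T_{f,k} \|_p + \epsilon$ for $k$ large, so it suffices to show $\limsup_k \| T_{f,k} \|_p^p \leq \int |f|^p d\nu$. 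Since $f$ is real, $T_{f,k}$ is self-adjoint; if $s_{i,k}$ is an orthonormal eigenbasis of $H^0(X, L^{\otimes k})$ with $T_{f,k} s_{i,k} = \mu_{i,k} s_{i,k}$, then $\mu_{i,k} = \langle T_{f,k} s_{i,k}, s_{i,k}\rangle = \frac{1}{n!}\int f |s_{i,k}|^2 c_1(L, h^L)^n$ is the average of $f$ against the probability measure $\frac{1}{n!}|s_{i,k}|^2 c_1(L, h^L)^n$, so Jensen's inequality for the convex function $t \mapsto |t|^p$ gives $|\mu_{i,k}|^p \leq \frac{1}{n!}\int |f|^p |s_{i,k}|^2 c_1(L, h^L)^n$. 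Summing over $i$,
\[ {\rm{Tr}}\big[ |T_{f,k}|^p \big] \;=\; \sum_i |\mu_{i,k}|^p \;\leq\; \frac{1}{n!}\int |f(x)|^p B_k(x)\, c_1(L, h^L)^n , \]
and dividing by $N_k$ and using (\ref{eq_thm_tian}) together with $N_k \sim k^n \int c_1(L)^n / n!$ shows $\limsup_k \| T_{f,k} \|_p^p \leq \int |f|^p d\nu$, as wanted.

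For the operator-norm statement, $f$ is continuous and $\| T_{f,k} \| \leq \sup|f|$ because $B_k$ is a contraction and multiplication by $f$ has norm $\sup|f|$; so the Toeplitz condition $\| T_k \| \leq \| T_{f,k} \| + \epsilon$ gives $\limsup_k \| T_k \| \leq \sup|f|$. Conversely, Proposition \ref{prop_c0_bnd} gives $\frac{1}{k^n}\sup_x |T_k(x)| \leq \| T_k \| \cdot \sup_x \frac{1}{k^n}B_k(x)$, and since $\sup_x \frac{1}{k^n}B_k(x) \to 1$ by (\ref{eq_thm_tian}) while $\frac{1}{k^n}\sup_x |T_k(x)| \to \sup|f|$ by Theorem \ref{thm_toepl_refines} and compactness of $X$, we get $\sup|f| \leq \liminf_k \| T_k \|$ and simultaneously $\sup|f| = \lim_k \frac{1}{k^n}\sup_x |T_k(x)|$. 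Hence $\lim_k \| T_k \| = \sup|f| = \lim_k \frac{1}{k^n}\sup_x |T_k(x)|$.

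The one genuinely delicate step is the estimate $\limsup_k \| T_{f,k} \|_p^p \leq \int |f|^p d\nu$, required for \emph{every} $p$: interpolating the cheap endpoint bounds $\| T_{f,k} \|_1 \leq (1+o(1))\int |f|\, d\nu$ and $\| T_{f,k} \| \leq \sup|f|$ via (\ref{eq_a_k_bnd_p1}) would only yield $\big(\int |f|\, d\nu\big)^{1/p}\big(\sup|f|\big)^{1-1/p}$, which is in general strictly larger than $\sqrt[p]{\int |f|^p d\nu}$; one must instead exploit that $T_{f,k}$ is a compression of a multiplication operator, through the Jensen (Peierls--Bogoliubov type) inequality above. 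Everything else reduces to Propositions \ref{prop_c0_bnd} and \ref{prop_l1_bnd}, Theorem \ref{thm_toepl_refines}, and Tian's theorem.
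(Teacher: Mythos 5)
Your proof is correct, but the key step --- the first equality $\lim_k\|T_k\|_p = \sqrt[p]{\int|f|^pd\nu}$ --- is handled by a genuinely different route than in the paper. The paper invokes the functional calculus developed in the same section (Proposition \ref{prop_func_toepl}, Corollary \ref{cor_funct_calc}): $|T_k|^p = g(T_k)$ with $g(t)=|t|^p$ is again a Toeplitz operator of Schatten class with symbol $|f|^p$, whence $\frac{1}{N_k}\bigl|{\rm Tr}[|T_k|^p] - {\rm Tr}[T_{|f|^p,k}]\bigr| \le \bigl\||T_k|^p - T_{|f|^p,k}\bigr\|_1 \le \epsilon$ for large $k$, and the trace of $T_{|f|^p,k}$ is then computed from its diagonal kernel. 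You instead prove the two inequalities separately: the lower bound from Proposition \ref{prop_l1_bnd} combined with the already-established convergence of $\frac{1}{k^n}T_k(\cdot)$ in $L^p(X)$, and the upper bound from the Berezin--Lieb/Jensen inequality ${\rm Tr}[|T_{f,k}|^p] \le \frac{1}{n!}\int|f|^pB_k\,c_1(L,h^L)^n$ applied to the eigenvalues of $T_{f,k}$, followed by Tian's theorem and Riemann--Roch. Both arguments are valid; yours avoids the functional-calculus machinery entirely (needing only the triangle inequality for $\|\cdot\|_p$), and your closing remark correctly identifies why interpolating the $p=1$ and $p=\infty$ endpoints is insufficient --- that is exactly the gap that either the functional calculus or the Jensen step must fill. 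One small inaccuracy: the diagonal kernel of $T_{g,k}$ is $\int g(y)|B_k(x,y)|^2\,d\nu(y)\cdot\int c_1(L)^n/n!$, as in (\ref{eq_toepl_diag}), not $g(x)B_k(x)$; however, the only facts you extract from this claim --- the trace identity ${\rm Tr}[T_{g,k}]=\frac{1}{n!}\int gB_k\,c_1(L,h^L)^n$ and the reduction to real $f$ (which follows instead from Lemma \ref{lem_conv_kernel} together with Proposition \ref{prop_l1_bnd}) --- are both correct, so nothing in the argument breaks.
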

	In order to prove these statements, we will need several auxiliary results. 
	\begin{lem}\label{lem_conv_kernel}
		For any $p \in [1, +\infty[$, $f \in L^p(X)$, the sequence of functions $k \in \nat$, $x \mapsto \frac{1}{k^n} T_{f, k}(x)$, $x \in X$, converges to $f$ in $L^p(X)$, as $k \to \infty$.
		If, moreover, $f$ is continuous, then the convergence is uniform.
	\end{lem}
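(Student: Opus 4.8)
The plan is to realize $\frac{1}{k^{n}} T_{f,k}(x)$ as an integral of $f$ against a non-negative kernel extracted from the Bergman kernel, to recognize that kernel as a uniform approximate identity, and then to combine the classical approximate-identity argument (for the uniform statement) with Schur's test and the density of $\mathscr{C}^{0}(X)$ in $L^{p}(X)$ (for the $L^{p}$ statement). First I would record the reproducing-kernel identity for the diagonal kernel of $T_{f,k} = B_{k} \circ M_{f,k}$: since $B_{k}$ has kernel $B_{k}(x,y) = \sum_{i} s_{i,k}(x) \otimes s_{i,k}(y)^{*}$, is self-adjoint and fixes $H^{0}(X, L^{\otimes k})$, one gets, in the notation of Proposition \ref{prop_c0_bnd}, $T_{f,k}(x) = \langle f s_{x,k}, s_{x,k} \rangle_{{\rm Hilb}_{k}(h^{L})} \cdot B_{k}(x) = \int_{X} f(y) \, |B_{k}(x,y)|^{2}_{h^{L^{\otimes k}}} \, dV(y)$, where $dV := \frac{1}{n!} c_{1}(L, h^{L})^{n}$. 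Setting $K_{k}(x,y) := \frac{1}{k^{n}} |B_{k}(x,y)|^{2}_{h^{L^{\otimes k}}}$, we then have $\frac{1}{k^{n}} T_{f,k}(x) = \int_{X} f(y) K_{k}(x,y) \, dV(y)$, with $K_{k} \geq 0$, $K_{k}(x,y) = K_{k}(y,x)$ (since $B_{k}(y,x) = B_{k}(x,y)^{*}$), and $\int_{X} K_{k}(x,y) \, dV(y) = \frac{1}{k^{n}} B_{k}(x)$, which converges to $1$ uniformly by Tian's theorem (\ref{eq_thm_tian}); in particular $C_{0} := \sup_{k} \sup_{x} \frac{1}{k^{n}} B_{k}(x) < \infty$.

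Next I would invoke the off-diagonal decay of the Bergman kernel, which is classical (cf. \cite{MaHol}, and \cite{TianBerg} for the peak-section formulation): for every $\delta > 0$, $\sup_{x} \int_{\{ \dist(x,y) \geq \delta \}} K_{k}(x,y) \, dV(y) \to 0$ as $k \to \infty$. Together with the preceding paragraph this says that $\{ K_{k}(x, \cdot) \, dV \}_{k}$ is a uniform approximate identity. For $f \in \mathscr{C}^{0}(X)$ one writes $\frac{1}{k^{n}} T_{f,k}(x) - f(x) = \int_{X} (f(y) - f(x)) K_{k}(x,y) \, dV(y) + f(x) \big( \frac{1}{k^{n}} B_{k}(x) - 1 \big)$ and splits the integral over $\{ \dist(x,y) < \delta \}$ and its complement: uniform continuity of $f$ controls the near-diagonal part by $\varepsilon C_{0}$, the off-diagonal decay controls the far part by $2 \| f \|_{\infty} \cdot o(1)$ uniformly in $x$, and the last term is $o(1)$ uniformly by Tian. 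This yields the uniform convergence $\sup_{x} | \frac{1}{k^{n}} T_{f,k}(x) - f(x) | \to 0$.

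For general $f \in L^{p}(X)$, $p \in [1, +\infty[$, I would first establish a uniform $L^{p}$-bound: by Schur's test applied to the symmetric kernel $K_{k}$, using $\int_{X} K_{k}(x,y) \, dV(y) = \int_{X} K_{k}(x,y) \, dV(x) \leq C_{0}$, the operator $h \mapsto \int_{X} h(y) K_{k}(x,y) \, dV(y)$ has $L^{p}(X)$-operator norm at most $C_{0}$, for every $k$ and every $p$. Given $\varepsilon > 0$, pick $g \in \mathscr{C}^{0}(X)$ with $\| f - g \|_{L^{p}(X)} < \varepsilon$; then $\| \frac{1}{k^{n}} T_{f,k} - f \|_{L^{p}(X)} \leq \| \frac{1}{k^{n}} T_{f-g,k} \|_{L^{p}(X)} + \| \frac{1}{k^{n}} T_{g,k} - g \|_{L^{p}(X)} + \| g - f \|_{L^{p}(X)} \leq C_{0} \varepsilon + o(1) + \varepsilon$, where the middle term tends to $0$ by the already-proved continuous case (uniform convergence on the compact $X$). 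Letting $k \to \infty$ and then $\varepsilon \to 0$ completes the proof.

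The only genuinely external input is the off-diagonal decay of the Bergman kernel with uniformity in the base point, which is standard and available in \cite{MaHol}; hence the expected main difficulty is essentially bookkeeping — quoting the three normalizing facts ($\frac{1}{k^{n}} B_{k} \to 1$ uniformly, symmetry of $K_{k}$, and near-diagonal concentration) with exactly the uniformity needed so that the approximate-identity estimate and Schur's test go through. If one prefers to avoid citing the full off-diagonal expansion, it suffices to use the second-moment bound $\sup_{x} \int_{X} \dist(x,y)^{2} K_{k}(x,y) \, dV(y) \to 0$, which also follows from the near-diagonal asymptotics and is enough for the splitting argument.
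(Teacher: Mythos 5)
Your argument is correct. It shares the paper's starting point --- the kernel identity $\frac{1}{k^{n}}T_{f,k}(x)=\int_X f(y)K_k(x,y)\,dV(y)$ with $K_k(x,y)=\frac{1}{k^{n}}|B_k(x,y)|^2$, which is exactly (\ref{eq_toepl_diag}) --- but then diverges in two ways. For continuous $f$ the paper simply cites the known uniform convergence (Remark \ref{rem_conv_kernel}), whereas you reprove it via the approximate-identity structure of $K_k$ (mass $\frac{1}{k^{n}}B_k(x)\to 1$ uniformly by Tian, plus off-diagonal decay from the second line of (\ref{eq_dai_liu_ma})); this is a clean, self-contained substitute. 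For the $L^p$ statement the paper uses the full near-diagonal expansion (first line of (\ref{eq_dai_liu_ma})) to replace $K_k$ by a Gaussian in geodesic coordinates, controls the replacement error by the generalized Young inequality --- giving the quantitative bound (\ref{eq_error_term}), which is $O(1/\sqrt{k})\,\|f\|_{L^p}$ uniformly over all $f\in L^p$ --- and then invokes convolution asymptotics. You instead use only the two "soft" facts (symmetric normalized mass and off-diagonal decay) together with Schur's test to get $\sup_k\|S_k\|_{L^p\to L^p}\le C_0$, and conclude by density of $\mathscr{C}^0(X)$ in $L^p(X)$ and the already-proved uniform case. Both routes are valid; yours needs less of the Bergman kernel machinery (no near-diagonal expansion) and is more elementary, while the paper's yields an explicit rate of approximation by the Gaussian smoothing, which is stronger information than mere convergence. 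The only points to make explicit in a final write-up are the Schur/Riesz--Thorin step for general $p$ (you use both the $L^1$ and $L^\infty$ row/column bounds, which is where the symmetry of $K_k$ enters) and the remark that $T_{f,k}$ is still well defined for unbounded $f\in L^p$ via the integral formula.
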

	\begin{rem}\label{rem_conv_kernel}
		Ma-Marinescu in \cite[Theorem 0.1]{MaMarBTKah} established the analogous result in realms of Toeplitz operators associated with smooth symbols $f$.
		For continuous $f$, the result was previously established \cite[Theorem 3.3]{BarrMa}.
	\end{rem}
	\begin{proof}
		By Remark \ref{rem_conv_kernel}, it suffices to show the $L^p(X)$-convergence.
		Remark that $T_{f, k}(x)$ can be expressed in terms of the Bergman kernel $B_k(x, y) \in L_x^{\otimes k} \otimes (L_y^{\otimes k})^*$, $x, y \in X$, as follows
		\begin{equation}\label{eq_toepl_diag}
			T_{f, k}(x)
			=
			\int f(y) |B_k(x, y)|^2 d \nu(y) \cdot \frac{\int c_1(L)^n}{n!}.
		\end{equation}
		Now, for $x \in X$, we denote by $\exp_x: T_x X \to X$ the geodesic coordinates, considered with respect to the Kähler form $c_1(L, h^L)$.
		The main results from Dai-Liu-Ma \cite{DaiLiuMa} and Ma-Marinescu \cite{MaMarOffDiag}, cf. \cite[Theorem 4.2.1]{MaHol}, imply that there are $c, C, \epsilon > 0$, $k_0 \in \nat$, such that for any $k \geq k_0$, $x \in X$, $Z \in T_x X$, $|Z| < \epsilon$, we have 
		\begin{equation}\label{eq_dai_liu_ma}
		\begin{aligned}
			& \Big|
			|B_k(x, \exp_x(Z))|
			-
			k^n 
			\exp \big(- \frac{\pi}{2} k |Z|^2 \big)
			\Big|
			\leq 
			C
			k^{n - \frac{1}{2}} \cdot (\sqrt{k} |Z|)^{2 n + 1} \exp(- c \sqrt{k} |Z|),
			\\
			&
			|B_k(x, y)|
			\leq
			C k^n \exp( - c \sqrt{k} {\rm{dist}}(x, y)).
		\end{aligned}	
		\end{equation}
		Remark that for any $c > 0$, there is $C_0 > 0$, such that for any $k \in \nat^*$, we have
		\begin{equation}\label{eq_int_gauss}
			\int_{Z \in \comp^n} k^{n} \cdot (\sqrt{k} |Z|)^{2 n + 1} \exp(- c \sqrt{k} |Z|) dZ
			\leq 
			C_0.
		\end{equation}
		In particular, if we define the function $G_{f, k} : X \to \real$ as follows
		\begin{equation}
			G_{f, k}(x) = 
			k^n
			\int_{Z \in T_x X} f(\exp_x(Z)) \exp \big(- \pi k |Z|^2 \big) d Z,
		\end{equation}
		then directly from Generalized Young's Inequality, cf. \cite[(0.10)]{FollandBook}, (\ref{eq_dai_liu_ma}) and (\ref{eq_int_gauss}), we deduce that there is $C_1 > 0$, such that for any $k \geq k_0$, $f \in L^p(X)$, we have
		\begin{equation}\label{eq_error_term}
			\Big(
			\int
			\Big|
				\frac{1}{k^n} T_{f, k}(x) - G_{f, k}(x)
			\Big|^p d \nu(x)
			\Big)^{\frac{1}{p}}
			\leq
			\frac{C_1}{\sqrt{k}}
			\Big(
			\int
			\big|
				f(x)
			\big|^p d \nu(x)
			\Big)^{\frac{1}{p}}.
		\end{equation}
		Remark, however, that by the usual properties of convolutions, cf. \cite[Theorem (0.13)]{FollandBook}, the sequence of functions $x \mapsto G_{f, k}(x)$ converges to $f$ in $L^p(X)$, as $k \to \infty$.
		The result now follows from this and (\ref{eq_error_term}).
	\end{proof}
	
	\begin{lem}\label{lem_l1_bnd}
		For any $\epsilon > 0$, there is $k_0 \in \nat$, such that for any $f \in L^1(X)$, $k \geq k_0$, we have
		\begin{equation}
			\| T_{f, k} \|_1 \leq (1 + \epsilon) \int |f(x)| d \nu(x).
		\end{equation}
		For any $f \in L^{\infty}(X)$, $k \geq k_0$, we have $\| T_{f, k} \| \leq {\rm{esssup}}_{x \in X} |f(x)|$, for any $k \in \nat$.
	\end{lem}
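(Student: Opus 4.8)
The plan is to prove the two estimates separately, deriving both from elementary mapping properties of the Bergman projection together with Tian's expansion~(\ref{eq_thm_tian}) and the asymptotic Riemann--Roch--Hirzebruch formula $N_k \sim k^n \cdot \int c_1(L)^n / n!$ that was already used in the proof of Proposition~\ref{prop_l1_bnd}.

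For the operator-norm bound I would argue directly, with no asymptotics and uniformly in $k$. Since $B_k : L^{\infty}(X, L^{\otimes k}) \to H^0(X, L^{\otimes k})$ is the $L^2$-orthogonal projection, it is norm non-increasing, so for any $s \in H^0(X, L^{\otimes k})$ the definition~(\ref{eq_defn_l2}) of the $L^2$-norm gives $\| T_{f, k} s \|_{{\textrm{Hilb}}_k(h^L)} = \| B_k(f \cdot s) \|_{L^2} \leq \| f \cdot s \|_{L^2} \leq {\rm{esssup}}_{x \in X} |f(x)| \cdot \| s \|_{{\textrm{Hilb}}_k(h^L)}$, whence $\| T_{f, k} \| \leq {\rm{esssup}}_{x \in X} |f(x)|$ for every $k \in \nat$.

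For the $1$-Schatten bound I would use the duality formula $\| T_{f, k} \|_1 = \frac{1}{N_k} \sup \{ |{\rm{Tr}}[T_{f, k} U]| : U \in {\rm{End}}(H^0(X, L^{\otimes k})) \text{ unitary for } {\textrm{Hilb}}_k(h^L) \}$. Fixing such a $U$ and an orthonormal basis $s_{i, k}$, the identity $\langle B_k(f \cdot U s_{i, k}), s_{i, k} \rangle = \langle f \cdot U s_{i, k}, s_{i, k} \rangle$ (valid because $s_{i, k} \in H^0(X, L^{\otimes k})$) yields ${\rm{Tr}}[T_{f, k} U] = \frac{1}{n!} \int_X f(x) K_U(x) \, c_1(L, h^L)^n$, where $K_U(x) := \sum_{i} \langle (U s_{i, k})(x), s_{i, k}(x) \rangle_{h^{L^{\otimes k}}}$. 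Since $\{ U s_{i, k} \}$ is again an orthonormal basis, $\sum_i |(U s_{i, k})(x)|^2_{h^{L^{\otimes k}}} = B_k(x) = \sum_i |s_{i, k}(x)|^2_{h^{L^{\otimes k}}}$, so Cauchy--Schwarz gives the pointwise bound $|K_U(x)| \leq B_k(x)$ and hence $\| T_{f, k} \|_1 \leq \frac{1}{N_k} \cdot \frac{\int c_1(L)^n}{n!} \int_X |f(x)| B_k(x) \, d\nu(x)$, uniformly in $U$. Then~(\ref{eq_thm_tian}) gives, for any $\epsilon_1 > 0$, a $k_0$ with $B_k(x) \leq (1 + \epsilon_1) k^n$ for all $x \in X$ and $k \geq k_0$, and asymptotic Riemann--Roch gives, after enlarging $k_0$, $k^n \cdot \frac{\int c_1(L)^n}{n!} \leq (1 + \epsilon_2) N_k$ for $k \geq k_0$; choosing $\epsilon_1, \epsilon_2$ with $(1 + \epsilon_1)(1 + \epsilon_2) \leq 1 + \epsilon$ yields $\| T_{f, k} \|_1 \leq (1 + \epsilon) \int_X |f(x)| \, d\nu(x)$ for all $k \geq k_0$.

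There is no real obstacle here: this is a soft one-sided estimate, and the only point requiring a little care is the passage from the pointwise comparison $|K_U(x)| \leq B_k(x)$ to the trace-norm estimate via the unitary-duality characterization of $\| \cdot \|_1$. For real-valued $f$ one may instead split $f = f_+ - f_-$ into positive and negative parts, observe that $T_{f_{\pm}, k}$ are positive operators with $\| T_{f_{\pm}, k} \|_1 = \frac{1}{N_k} {\rm{Tr}}[T_{f_{\pm}, k}] = \frac{1}{N_k} \cdot \frac{\int c_1(L)^n}{n!} \int_X f_{\pm}(x) B_k(x) \, d\nu(x)$, and conclude by subadditivity of $\| \cdot \|_1$, exactly as in the derivation of~(\ref{eq_tk_l1_nm}). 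One should also note that the threshold $k_0$ depends only on $\epsilon$ and not on $f$, which is automatic since the inputs~(\ref{eq_thm_tian}) and Riemann--Roch are uniform in $x$ and independent of $f$.
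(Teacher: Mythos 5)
Your proposal is correct. The operator-norm bound via the norm-contracting property of the orthogonal projection $B_k$ is equivalent to the paper's one-line argument (the paper instead uses the operator inequalities ${\rm{essinf}}\, f \cdot {\rm{Id}} \leq T_{f,k} \leq {\rm{esssup}}\, f \cdot {\rm{Id}}$). For the $1$-Schatten bound, you and the paper converge on the same key intermediate estimate, namely that $N_k \| T_{f,k} \|_1$ is controlled by $\tfrac{\int c_1(L)^n}{n!}\int |f(x)| B_k(x)\, d\nu(x)$, after which Tian's expansion (\ref{eq_thm_tian}) and asymptotic Riemann--Roch finish the job identically; but the mechanisms differ. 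The paper first treats $f \geq 0$, where $T_{f,k}$ is a positive operator so that $\| T_{f,k} \|_1$ is literally its normalized trace, computes that trace via the kernel representation (\ref{eq_toepl_diag}) and Tonelli, and then handles general $f$ by splitting into positive and negative parts (exactly the alternative you sketch at the end). You instead use the duality ${\rm{Tr}}[|T|] = \sup_U |{\rm{Tr}}[TU]|$ over unitaries and the pointwise Cauchy--Schwarz bound $|K_U(x)| \leq B_k(x)$, which handles arbitrary (even complex-valued) $f$ in one stroke without any decomposition and without invoking positivity of the operator. The trade-off is that your route requires the (standard, but not stated in the paper) unitary characterization of the trace norm, whereas the paper's route stays entirely within the kernel calculus it has already set up; both are complete and the uniformity of $k_0$ in $f$ holds in either case for the reason you give.
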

	\begin{proof}
		Remark first that if $f$ is positive, then $\| T_{f, k} \|_1 = \frac{1}{N_k} \int 
			T_{f, k}(x)
			d \nu(x) \cdot \int c_1(L)^n / n!$.
		As a direct consequence of (\ref{eq_toepl_diag}) and Tonelli's theorem, we deduce
		\begin{equation}\label{eq_tonelli}
			\int 
			T_{f, k}(x)
			d \nu(x)
			=
			\int f(y) |B_k(x, y)|^2 d \nu(x) d \nu(y)  \cdot \frac{\int c_1(L)^n}{n!}.
		\end{equation}
		Remark, however, that $\int |B_k(x, y)|^2 d \nu(y)  \cdot \frac{\int c_1(L)^n}{n!} = B_k(x)$, and so we have
		\begin{equation}\label{eq_tonelli12212}
			\int 
			| T_{f, k}(x) |
			d \nu(x)
			\leq
			\int |f(x)| B_k(x) d \nu(x).
		\end{equation}
		The first part of Lemma \ref{lem_l1_bnd} for positive $f$ now follows from (\ref{eq_thm_tian}), (\ref{eq_tonelli12212}) and the asymptotic Riemann-Roch-Hirzebruch theorem. 
		For general $f$, The first part of Lemma \ref{lem_l1_bnd} is established by decomposing $f$ as a difference of a positive and a negative part and applying Lemma \ref{lem_l1_bnd} for each of them. The second part follows directly from the trivial bounds ${\rm{essinf}}_{x \in X} f(x) \cdot {\rm{Id}} \leq T_{f, k} \leq {\rm{esssup}}_{x \in X} f(x) \cdot {\rm{Id}}$.
	\end{proof}
	\begin{proof}[Proof of Proposition \ref{prop_conv_toepl_symb}]
		From the uniform bound assumption, the second part of Lemma \ref{lem_l1_bnd} and (\ref{eq_a_k_bnd_p1}), we see that it suffices to establish that for any $\epsilon > 0$, there is $k_0 \in \nat$, such that for any $k \geq k_0$, we have
		\begin{equation}
			\| T_{f_k, k} - T_{f, k} \|_1 \leq \epsilon.
		\end{equation}
		This follows directly from the first part of Lemma \ref{lem_l1_bnd} and the assumption on the $L^1(X)$-convergence of $f_k$ towards $f$.
	\end{proof}
	\begin{proof}[Proof of Theorem \ref{thm_toepl_refines}]
		It follows immediately from Propositions \ref{prop_c0_bnd}, \ref{prop_l1_bnd} and Lemma \ref{lem_l1_bnd}.
	\end{proof}
	
	\par 
	We will now study the functional-analytic and algebraic properties of the vector space of Toeplitz operators (resp. of Schatten class).
	\begin{prop}\label{prop_prod_toepl}
		For any continuous $f, g : X \to \real$ (resp. $f, g \in L^{\infty}(X)$) the sequence of operators $T_{f, k} \circ T_{g, k}$ is a Toeplitz operator (resp. of Schatten class) with symbol $f g$.
	\end{prop}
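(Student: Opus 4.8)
The plan is to prove the statement first for continuous $f, g$ by a direct kernel estimate, and then to bootstrap to the general $L^{\infty}$ case by approximation in $L^1(X)$, being careful to work with Schatten norms throughout. Recall that $T_{f, k} = B_k \circ M_{f, k}$, so, writing $V := \frac{1}{n!} \int_X c_1(L)^n$ and using the reproducing kernel $B_k(\cdot, \cdot)$, the operator $T_{f, k}$ has Schwartz kernel $(x, y) \mapsto f(y) B_k(x, y)$ against $d\nu$. Since $B_k$ is an orthogonal projection we have $B_k(x, y) = V \int_X B_k(x, w) B_k(w, y) \, d\nu(w)$, and therefore the Schwartz kernel of $T_{f, k} \circ T_{g, k} - T_{fg, k}$ equals
\begin{equation}
    \mathcal{E}_k(x, y) = g(y) \cdot V \int_X \big( f(w) - f(y) \big) \, B_k(x, w) B_k(w, y) \, d\nu(w) .
\end{equation}
Everything reduces to showing that the operator with kernel $\mathcal{E}_k$ is small in the appropriate norm.

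For continuous $f, g$ I would use the global off-diagonal bound $|B_k(x, y)| \leq C k^n \exp(- c \sqrt{k}\, \dist(x, y))$ from (\ref{eq_dai_liu_ma}). Integrating a Gaussian of width $\sim k^{-1/2}$ over the $2n$-dimensional compact manifold $X$ gives $V \int_X |B_k(x, y)| \, d\nu(y) \leq C'$ uniformly in $x$ and $k$, and, for fixed $\delta > 0$, $V \int_{\dist(w, y) \geq \delta} |B_k(w, y)| \, d\nu(y) = O(\exp(-c'\sqrt{k}\,\delta))$. Splitting the $w$-integral in $\mathcal{E}_k$ at $\dist(w, y) = \delta$ and bounding $|f(w) - f(y)|$ by the modulus of continuity $\omega_f(\delta)$ on the near region and by $2 \|f\|_{\infty}$ on the far region, Schur's test — which bounds the operator norm by the geometric mean of $\sup_x V \int |\mathcal{E}_k(x, y)| \, d\nu(y)$ and $\sup_y V \int |\mathcal{E}_k(x, y)| \, d\nu(x)$ — yields
\begin{equation}
    \big\| T_{f, k} \circ T_{g, k} - T_{fg, k} \big\| \leq C \, \|g\|_{\infty} \big( \omega_f(\delta) + o_k(1) \big) .
\end{equation}
Letting $k \to \infty$ and then $\delta \to 0$ proves that $\{ T_{f, k} \circ T_{g, k} \}_k$ is a Toeplitz operator with symbol $fg$ (alternatively one may invoke \cite{BordMeinSchli}, \cite[\S 7]{MaHol} at this point).

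For general $f, g \in L^{\infty}(X)$ I would choose continuous $f_j, g_j$ with $f_j \to f$, $g_j \to g$ in $L^1(X)$ and $\|f_j\|_{\infty} \leq \|f\|_{\infty}$, $\|g_j\|_{\infty} \leq \|g\|_{\infty}$ (mollify and truncate). The family $\{ T_{f, k} \circ T_{g, k} \}_k$ is uniformly bounded in operator norm by $\|f\|_{\infty}\|g\|_{\infty}$ (second part of Lemma \ref{lem_l1_bnd}), which gives the first half of Definition \ref{defn_toepl_sch}. Writing $T_{f, k} T_{g, k} - T_{f_j, k} T_{g_j, k} = T_{f, k}( T_{g, k} - T_{g_j, k}) + ( T_{f, k} - T_{f_j, k}) T_{g_j, k}$ and using $\| A B \|_p \leq \| A \| \cdot \| B \|_p$, the interpolation inequality $\| B \|_p \leq \| B \|_1^{1/p} \| B \|^{1 - 1/p}$ from (\ref{eq_a_k_bnd_p1}), the $L^1$-bound $\| T_{h, k} \|_1 \leq (1 + \epsilon) \|h\|_{L^1}$ from the first part of Lemma \ref{lem_l1_bnd}, and the trivial bounds $\| T_{f - f_j, k} \| \leq 2 \|f\|_{\infty}$, $\| T_{g - g_j, k} \| \leq 2 \|g\|_{\infty}$, one obtains, for each fixed $p \in [1, +\infty[$, a bound of the form $\| T_{f, k} T_{g, k} - T_{f_j, k} T_{g_j, k} \|_p \leq C_p ( \|f - f_j\|_{L^1} + \|g - g_j\|_{L^1} )^{1/p}$ valid for all $k$ large (with $C_p$ depending only on $p, \|f\|_{\infty}, \|g\|_{\infty}$). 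Since $f_j g_j \to f g$ in $L^1(X)$, and since $\{ T_{f_j, k} T_{g_j, k} \}_k$ is a Toeplitz operator of Schatten class with symbol $f_j g_j$ by the continuous case, a triangle inequality — combining the last display with the estimate $\| T_{f_j g_j, k} - T_{fg, k} \|_p \leq ((1+\epsilon)\|f_j g_j - fg\|_{L^1})^{1/p}(2\|f\|_{\infty}\|g\|_{\infty})^{1 - 1/p}$, again from Lemma \ref{lem_l1_bnd} and (\ref{eq_a_k_bnd_p1}) — identifies $\{ T_{f, k} T_{g, k} \}_k$ as a Toeplitz operator of Schatten class with symbol $f g$.

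The technical core is the continuous-symbol estimate via Schur's test, but all the off-diagonal input is already available in (\ref{eq_dai_liu_ma}); the genuinely new point, and the place requiring the most care, is the $L^{\infty}$ step, where operator norms must be replaced systematically by Schatten $p$-norms and the approximating symbols must be kept uniformly bounded in $L^{\infty}$ so that the symbol-difference terms stay controlled.
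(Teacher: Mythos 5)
Your proof is correct. For the $L^{\infty}$/Schatten half you follow essentially the same strategy as the paper: approximate the symbols with uniform sup-norm control, use the $L^1$-to-Schatten bound of Lemma \ref{lem_l1_bnd}, and interpolate via (\ref{eq_a_k_bnd_p1}) — in fact you are more systematic than the paper's admittedly brief sketch about carrying out the estimates in $\|\cdot\|_p$ rather than the operator norm, which is the right level of care for Definition \ref{defn_toepl_sch}. The genuine divergence is in the continuous case: the paper simply cites the smooth-symbol composition results of \cite{BordMeinSchli}, \cite[\S 7]{MaHol} and reduces continuous symbols to smooth ones by the same approximation scheme, whereas you give a self-contained argument via the explicit error kernel $\mathcal{E}_k$, the reproducing property of $B_k$, the off-diagonal decay (\ref{eq_dai_liu_ma}), and Schur's test with a near/far splitting controlled by the modulus of continuity $\omega_f(\delta)$. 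Your route buys independence from the smooth Toeplitz calculus and treats continuous symbols in one step with a quantitative bound; the paper's route is shorter but leans on external references and needs one extra approximation layer (continuous by smooth in the uniform norm). The only facts you use without proof are standard: the Hölder-type inequality $\|AB\|_p \leq \|A\| \cdot \|B\|_p$ for the normalized Schatten norms, and the observation that a Toeplitz operator in the sense of Definition \ref{defn_toepl} is automatically one of Schatten class (since $\|\cdot\|_p \leq \|\cdot\|$ with the paper's normalization), which you need when importing the continuous case for $f_j g_j$ into the Schatten-norm triangle inequality; both are elementary and worth a sentence each.
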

	\begin{proof}
		When both $f$, $g$ are smooth, the result is well known, see \cite{BordMeinSchli}, \cite[\S 7]{MaHol}.
		The general case follows easily by approximation and Lemma \ref{lem_l1_bnd}.
		For brevity, we only give the main idea.
		We fix $C := {\rm{esssup}} \max (|f|, |g|)$ and for any $\epsilon > 0$, consider smooth $f_{\epsilon}$, $g_{\epsilon}$, verifying $\sup \max(|f_{\epsilon}|, |g_{\epsilon}|) \leq C + 1$ and $\int |f(x) - f_{\epsilon}(x)| d \eta(x) < \epsilon \int d \eta(x)$, $\int |g(x) - g_{\epsilon}(x)| d \eta(x) < \epsilon \int d \eta(x)$, the existence of which follows from the usual density statements.
		It is then a direct that for any $\epsilon > 0$, there is $k_0 \in \nat$, such that for any $k \geq k_0$, we have 
		\begin{equation}\label{eq_toepl_prod_reg}
			\Big\| T_{f, k} \circ T_{g, k} - T_{f_{\epsilon}, k} \circ T_{g_{\epsilon}, k} \Big\|
			\leq
			4 \epsilon (C + 1), 
			\qquad
			\Big\| T_{f_{\epsilon} g_{\epsilon}, k} - T_{f g, k} \Big\|
			\leq
			4 \epsilon (C + 1).
		\end{equation}
		From Lemma \ref{lem_l1_bnd}, (\ref{eq_toepl_prod_reg}) and the validity of Proposition \ref{prop_prod_toepl} for $f_{\epsilon}$, $g_{\epsilon}$, we conclude that for any $\epsilon > 0$, there is $k_1 \in \nat$, such that for any $k \geq k_1$, we have
		\begin{equation}\label{eq_toepl_prod_reg1}
			\Big\| T_{f_{\epsilon}, k} \circ T_{g_{\epsilon}, k} - T_{f_{\epsilon} g_{\epsilon}, k} \Big\|
			\leq
			5 \epsilon (C + 1).
		\end{equation}
		A combination of (\ref{eq_toepl_prod_reg}) and (\ref{eq_toepl_prod_reg1}) yields the proof.
	\end{proof}
	\begin{cor}
		The vector space of Toeplitz operators (resp. of Schatten class) forms an algebra, and the symbol map is a algebra morphism.
	\end{cor}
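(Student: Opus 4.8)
The plan is to deduce the Corollary from Proposition \ref{prop_prod_toepl} and the norm bounds of Lemma \ref{lem_l1_bnd}, with essentially no new input. First I would check that the set of Toeplitz operators (resp. of Schatten class) is a vector space. Since the quantization $f \mapsto T_{f,k}$ is linear, if $\{T_k\}$, $\{S_k\}$ have symbols $f$, $g$, then $T_{f,k} + T_{g,k} = T_{f+g,k}$, and the triangle inequality in the operator norm (resp. in each $p$-Schatten norm, together with $\|T_k + S_k\| \le \|T_k\| + \|S_k\|$ to preserve the uniform operator bound) shows that $\{T_k + S_k\}$ is a Toeplitz operator (resp. of Schatten class) with symbol $f + g$; similarly $\{cT_k\}$ has symbol $cf$.

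For products, write $T_k = T_{f,k} + E_k$ and $S_k = T_{g,k} + F_k$, where $\|E_k\|, \|F_k\| \to 0$ (resp. $\|E_k\|_p, \|F_k\|_p \to 0$ for every $p \in [1,+\infty[$, with $\|E_k\|$, $\|F_k\|$ uniformly bounded). Expanding,
\begin{equation*}
	T_k \circ S_k = T_{f,k}\circ T_{g,k} + T_{f,k}\circ F_k + E_k \circ T_{g,k} + E_k \circ F_k.
\end{equation*}
By Proposition \ref{prop_prod_toepl} the first summand is a Toeplitz operator (resp. of Schatten class) with symbol $fg$, so it remains to show the other three summands tend to $0$ in the operator norm (resp. in each $p$-Schatten norm). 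For ordinary Toeplitz operators this is immediate from Lemma \ref{lem_l1_bnd}, which bounds $\|T_{f,k}\|$ and $\|T_{g,k}\|$: then $\|T_{f,k}\circ F_k\| \le \|T_{f,k}\|\,\|F_k\| \to 0$, and likewise for the remaining terms. For the Schatten class one uses the ideal inequalities $\|AB\|_p \le \|A\|\,\|B\|_p$ and $\|AB\|_p \le \|A\|_p\,\|B\|$, which hold for the present normalization of $\|\cdot\|_p$; combined with the uniform operator-norm bounds on $T_{f,k}$, $T_{g,k}$ (Lemma \ref{lem_l1_bnd}) and on $E_k$, $F_k$ (part of Definition \ref{defn_toepl_sch}), they give $\|T_{f,k}\circ F_k\|_p \le \|T_{f,k}\|\,\|F_k\|_p \to 0$, $\|E_k\circ T_{g,k}\|_p \le \|E_k\|_p\,\|T_{g,k}\| \to 0$, and $\|E_k\circ F_k\|_p \le \|E_k\|\,\|F_k\|_p \to 0$. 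Finally $\{T_k \circ S_k\}$ is uniformly bounded in operator norm as a product of uniformly bounded sequences, and $fg \in L^{\infty}(X)$, so $\{T_k \circ S_k\}$ is a Toeplitz operator of Schatten class with symbol $fg$.

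To conclude that the symbol map is a well-defined algebra morphism I would first note uniqueness of the symbol: for $h \in L^{\infty}(X)$ the sequence $\{T_{h,k}\}$ is itself a Toeplitz operator (resp. of Schatten class) with symbol $h$ (trivially, using the uniform bound $\|T_{h,k}\| \le \mathrm{esssup}|h|$ from Lemma \ref{lem_l1_bnd}), so if $\{T_k\}$ had two symbols $f, f'$ then $\|T_{f,k} - T_{f',k}\| = \|T_{f-f',k}\| \to 0$ (resp. $\|T_{f-f',k}\|_p \to 0$ for all $p$), and Theorem \ref{thm_toepl_norm} forces $\sup|f - f'| = 0$ (resp. $\|f-f'\|_{L^p(X)} = 0$ for all $p$), i.e. $f = f'$ (resp. $f = f'$ almost everywhere). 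Given uniqueness, additivity of the symbol map is the vector-space computation above and multiplicativity is the product statement just proved, so the symbol map is an algebra morphism. The only delicate point is the bookkeeping of the Schatten-class hypotheses — keeping all four factors uniformly bounded in operator norm so the ideal inequalities apply — but no genuine difficulty arises beyond Proposition \ref{prop_prod_toepl}.
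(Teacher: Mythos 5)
Your proposal is correct and follows the same route as the paper, whose proof of this corollary is simply ``Follows directly from Proposition \ref{prop_prod_toepl}''; you have filled in the standard details (linearity of $f \mapsto T_{f,k}$, the decomposition $T_k = T_{f,k} + E_k$, and the ideal inequalities for the normalized Schatten norms), all of which check out. The only cosmetic remark is that your appeal to Theorem \ref{thm_toepl_norm} for uniqueness of the symbol could equally be replaced by Theorem \ref{thm_toepl_refines}, which appears earlier and already identifies the symbol from the diagonal kernel.
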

	\begin{proof}
		Follows directly from Proposition \ref{prop_prod_toepl}.
	\end{proof}
	\begin{prop}\label{prop_func_toepl}
		For any continuous functions $g : \real \to \real$, $f : X \to \real$ (resp. $f \in L^{\infty}(X)$), the sequence of operators $\{ g(T_{f, k}) \}_{k = 0}^{+\infty}$ form a Toeplitz operator (resp. of Schatten class) with symbol $g(f)$.
	\end{prop}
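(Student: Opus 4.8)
The plan is to reduce everything to the case of a polynomial $g$ by exploiting the algebra structure of Toeplitz operators, and then to approximate a general continuous $g$ by polynomials using the spectral theorem. First I would note that, since $f$ is real-valued, $T_{f,k} = B_k \circ M_{f,k}$ is Hermitian with respect to ${\textrm{Hilb}}_k(h^L)$, and by the second part of Lemma \ref{lem_l1_bnd} its spectrum lies in $[-M, M]$ for $M := {\rm esssup}_{x \in X} |f(x)|$, uniformly in $k$. Hence the continuous functional calculus $g(T_{f,k})$ is well defined, and for any polynomial $q$ the spectral theorem gives $\| g(T_{f,k}) - q(T_{f,k}) \| \leq \sup_{|t| \leq M} |g(t) - q(t)|$; in particular $\| g(T_{f,k}) \| \leq \sup_{|t| \leq M} |g(t)|$, which already supplies the uniform operator-norm bound required in Definition \ref{defn_toepl_sch}. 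Also $g(f) \in \mathscr{C}^0(X)$ (resp.\ $g(f) \in L^{\infty}(X)$) whenever $f$ is, since $f$ has relatively compact essential image, so $T_{g(f), k}$ makes sense.

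Next I would invoke the algebra property (the Corollary following Proposition \ref{prop_prod_toepl}): iterating Proposition \ref{prop_prod_toepl} shows that $\{ T_{f,k}^m \}_k$ is a Toeplitz operator (resp.\ of Schatten class) with symbol $f^m$, and taking linear combinations, $\{ q(T_{f,k}) \}_k$ is a Toeplitz operator (resp.\ of Schatten class) with symbol $q(f)$ for every polynomial $q$. Thus $\| q(T_{f,k}) - T_{q(f), k} \| \to 0$ (resp.\ $\| q(T_{f,k}) - T_{q(f), k} \|_p \to 0$) as $k \to \infty$.

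For the final step, given $\epsilon > 0$ I would fix, by Weierstrass approximation, a polynomial $q$ with $\sup_{|t| \leq M} |g(t) - q(t)| < \epsilon$. In the continuous-symbol case, one writes
\[
\| g(T_{f,k}) - T_{g(f), k} \| \leq \| g(T_{f,k}) - q(T_{f,k}) \| + \| q(T_{f,k}) - T_{q(f), k} \| + \| T_{q(f) - g(f), k} \|,
\]
and bounds the first term by $\epsilon$ via the spectral theorem, the second by $\epsilon$ for $k$ large by the previous step, and the third by ${\rm esssup} |q(f) - g(f)| \leq \epsilon$ via the second part of Lemma \ref{lem_l1_bnd}; since $\epsilon$ is arbitrary this gives the claim. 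For the Schatten-class case I additionally fix $p \in [1, +\infty[$ and estimate $\| g(T_{f,k}) - q(T_{f,k}) \|_p \leq \| g(T_{f,k}) - q(T_{f,k}) \| \leq \epsilon$ (using $\| A \|_p \leq \| A \|$ for the normalized Schatten norm), then $\| q(T_{f,k}) - T_{q(f), k} \|_p \leq \epsilon$ for $k$ large, and finally, by (\ref{eq_a_k_bnd_p1}) together with both parts of Lemma \ref{lem_l1_bnd}, $\| T_{q(f) - g(f), k} \|_p \leq \| T_{q(f) - g(f), k} \|_1^{1/p} \cdot \| T_{q(f) - g(f), k} \|^{1 - 1/p} \leq \big( (1 + \epsilon) \int |q(f) - g(f)| d\nu \big)^{1/p} \epsilon^{1 - 1/p}$ for $k$ large.

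The step I expect to require the most care is matching the order of quantifiers in the Schatten-class definition: the approximating polynomial $q$ must be chosen \emph{first}, depending only on $\epsilon$ and $M$ and not on $k$ or $p$, so that the ``$k$ large'' thresholds produced by the algebra property are meaningful; there is no genuine analytic obstacle, but the bookkeeping must reproduce Definition \ref{defn_toepl_sch} verbatim.
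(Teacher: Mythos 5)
Your proposal is correct and follows exactly the route the paper indicates (the paper's own proof reads: polynomials via Proposition \ref{prop_prod_toepl}, then approximation, details left to the reader). You have simply filled in those details — the uniform spectral localization in $[-M,M]$, the Weierstrass approximation with $q$ chosen before $k$ and $p$, and the control of $T_{q(f)-g(f),k}$ via Lemma \ref{lem_l1_bnd} — all of which are sound.
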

	\begin{proof}
		For polynomials $g$, the result follows from Proposition \ref{prop_prod_toepl}.
		The general case is done by approximation.
		The details are left to the reader.
	\end{proof}
	\begin{cor}\label{cor_funct_calc}
		The vector space of Toeplitz operators (resp. of Schatten class) is closed under functional calculus associated with continuous functions.
	\end{cor}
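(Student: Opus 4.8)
The plan is to deduce the statement from Proposition \ref{prop_func_toepl} by showing that the continuous functional calculus is continuous with respect to the operator norm (resp.\ the Schatten norms) on the set of Hermitian operators with spectrum in a fixed compact interval. Let $\{T_k\}_{k=0}^{+\infty}$ be a Toeplitz operator (resp.\ of Schatten class) with symbol $f$, and let $g : \real \to \real$ be continuous; the goal is to show that $\{g(T_k)\}_{k=0}^{+\infty}$ is a Toeplitz operator (resp.\ of Schatten class) with symbol $g(f)$. First I would fix a uniform spectral bound: in the Schatten-class case the definition supplies $C > 0$ with $\|T_k\| \leq C$, and in the operator-norm case the convergence $\|T_k - T_{f, k}\| \to 0$ together with $\|T_{f, k}\| \leq \mathrm{esssup}\,|f|$ from Lemma \ref{lem_l1_bnd} gives such a $C$. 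Since $f$ is bounded, putting $C' := \max(C, \mathrm{esssup}\,|f|)$ and $K := [-C', C']$, Lemma \ref{lem_l1_bnd} shows that all $T_{f, k}$, and all $T_k$ for $k$ large, are Hermitian with spectrum in $K$.

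Next, given $\epsilon > 0$, choose a polynomial $P$ with $\sup_{K} |g - P| < \epsilon$. By the spectral theorem, $\|g(T_k) - P(T_k)\| < \epsilon$ and $\|g(T_{f, k}) - P(T_{f, k})\| < \epsilon$, and the same bounds hold for $\| \cdot \|_p$ since the normalized Schatten norm is dominated by the operator norm. To compare $P(T_k)$ with $P(T_{f, k})$, write $P(x) = \sum_j a_j x^j$ and telescope $T_k^{j} - T_{f, k}^{j} = \sum_{i=0}^{j-1} T_k^{i} (T_k - T_{f, k}) T_{f, k}^{j - 1 - i}$; using $\|AB\| \leq \|A\| \|B\|$, $\|AB\|_p \leq \|A\| \|B\|_p$ and $\|T_k\|, \|T_{f, k}\| \leq C'$, this yields a constant $C_P$ (depending only on $P$ and $C'$) with $\|P(T_k) - P(T_{f, k})\| \leq C_P \|T_k - T_{f, k}\|$, respectively $\|P(T_k) - P(T_{f, k})\|_p \leq C_P \|T_k - T_{f, k}\|_p$. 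By hypothesis the right-hand side tends to $0$ as $k \to \infty$.

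Finally, Proposition \ref{prop_func_toepl} gives that $\{g(T_{f, k})\}$ is a Toeplitz operator (resp.\ of Schatten class) with symbol $g(f)$, with uniformly bounded operator norms and $\|g(T_{f, k}) - T_{g(f), k}\| \to 0$ (resp.\ in $\| \cdot \|_p$). Combining the previous estimates through the triangle inequality gives $\limsup_{k \to \infty} \|g(T_k) - T_{g(f), k}\| \leq 2 \epsilon$, respectively the analogous bound for $\| \cdot \|_p$; since $\epsilon$ is arbitrary, the limit is $0$, and the operator norms $\|g(T_k)\| \leq \sup_K |g|$ are uniformly bounded, which completes the proof. The only genuine subtlety I anticipate is in the Schatten-class case: there one must first secure the uniform operator-norm bound on $T_k$ and $T_{f, k}$ before the polynomial approximation argument applies, because the hypothesis only controls $\|T_k - T_{f, k}\|_p$ and not $\|T_k - T_{f, k}\|$.
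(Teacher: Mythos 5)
Your proposal is correct and takes the same route as the paper, whose entire proof is ``Follows directly from Proposition \ref{prop_func_toepl}''; what you have done is supply in full the continuity-of-functional-calculus step (uniform spectral bound, Weierstrass approximation, telescoping of powers with the H\"older-type bound $\|ABC\|_p \leq \|A\|\,\|B\|_p\,\|C\|$) that the paper treats as immediate, and which is genuinely needed to pass from $g(T_{f,k})$ in the proposition to $g(T_k)$ for a general Toeplitz operator $T_k$. Your closing remark about the Schatten-class case --- that the uniform operator-norm bound from Definition \ref{defn_toepl_sch} must be secured before polynomial approximation, since the hypothesis only controls $\|T_k - T_{f,k}\|_p$ --- is exactly the right subtlety to flag.
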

	\begin{proof}
		Follows directly from Proposition \ref{prop_func_toepl}.
	\end{proof}
	\par 
	\begin{sloppypar}
	\begin{proof}[Proof of Theorem \ref{thm_berg_conv} for $L^p(X)$-convergence in place of pointwise convergence] 
		By Proposition \ref{prop_func_toepl}, Theorem \ref{thm_main1} directly implies the second part of Theorem \ref{thm_berg_conv}. 
		Similarly, Proposition \ref{prop_func_toepl} shows that Theorem \ref{thm_main2} implies the first part of Theorem \ref{thm_berg_conv} when pointwise convergence is replaced by the convergence in $L^p(X)$-spaces for any $p \in [1, +\infty[$. 
	\end{proof}
	\end{sloppypar}	
	\begin{proof}[Proof of Theorem \ref{thm_toepl_norm}]
		Let us first establish the first part. 
		The second identity follows from Theorem \ref{thm_toepl_refines}.
		To establish the first identity, we deduce from Proposition \ref{prop_func_toepl} that $|T_k|^p$, $k \in \nat$, is a Toeplitz operator of Schatten class with symbol $|f|^p$.
		Then for any $\epsilon > 0$, there is $k_0 \in \nat$, such that we have 
		\begin{equation}
			\Big|
			{\rm{Tr}} \big[ | T_k |^p \big] 
			-
			{\rm{Tr}} \big[ T_{|f|^p, k} \big] 
			\Big|
			\leq
			\epsilon N_k.
		\end{equation}
		However, ${\rm{Tr}}[ T_{|f|^p, k}] = \int T_{|f|^p, k}(x) d \nu(x)  \cdot \int c_1(L)^n / n!$, and the first identity again follows from Proposition \ref{prop_func_toepl}.
		The second part of the statement is well known and follows directly from Lemma \ref{lem_l1_bnd} and Theorem \ref{thm_toepl_refines}.
	\end{proof}
	
	As a conclusion, we point out that the analogue of the spectral convergence result of Boutet de Monvel-Guillemin \cite{BoutGuillSpecToepl}, also holds in the more general context of Toeplitz operators of Schatten class.
	More specifically, let us establish the following result.
	\begin{prop}
		For any Toeplitz operator of Schatten class $\{ T_k \}_{k = 0}^{+\infty}$, with symbol $f \in L^{\infty}(X)$, the spectral measures of $T_k$ converge weakly to the probability measure $f_*(\nu)$ on $\real$.
	\end{prop}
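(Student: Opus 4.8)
The plan is to deduce this from the functional-calculus properties of Toeplitz operators of Schatten class already set up above, by translating weak convergence of measures into convergence of normalized traces of $g(T_k)$ for continuous test functions $g$. Throughout, write $\mu_k := \frac{1}{N_k}\sum_{j=1}^{N_k}\delta[\lambda_{j,k}]$ for the spectral measure of $T_k$, where $\lambda_{j,k}$ are the eigenvalues of the operator $T_k$ (self-adjoint, by the standing assumption of this section), so that $\int_{\real} g\,d\mu_k = \frac{1}{N_k}{\rm{Tr}}[g(T_k)]$ for every continuous $g$. Since $\|T_k\|\le C$ uniformly in $k$, all the measures $\mu_k$ are supported in the fixed compact interval $[-C,C]$; hence it is enough to prove that $\frac{1}{N_k}{\rm{Tr}}[g(T_k)]\to\int_X g(f(x))\,d\nu(x)$ for every bounded continuous $g:\real\to\real$, which says exactly that $\mu_k$ converges weakly to $f_*(\nu)$.

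First I would invoke Corollary \ref{cor_funct_calc} together with Proposition \ref{prop_func_toepl}: since $\{T_k\}_{k=0}^{+\infty}$ is a Toeplitz operator of Schatten class with symbol $f\in L^{\infty}(X)$ and $g$ is continuous, $\{g(T_k)\}_{k=0}^{+\infty}$ is again a Toeplitz operator of Schatten class, now with symbol $g(f)\in L^{\infty}(X)$. Specializing (\ref{eq_toepl_schatten}) to $p=1$ and using the elementary bound $|{\rm{Tr}}[A]|\le{\rm{Tr}}[|A|]=N_k\|A\|_1$, this yields
\begin{equation}
	\limsup_{k\to\infty}\Big|\tfrac{1}{N_k}{\rm{Tr}}[g(T_k)]-\tfrac{1}{N_k}{\rm{Tr}}[T_{g(f),k}]\Big|=0 .
\end{equation}
It then remains to compute $\lim_{k\to\infty}\frac{1}{N_k}{\rm{Tr}}[T_{g(f),k}]$. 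Integrating the identity defining the diagonal kernel (cf. (\ref{eq_basic_id})) gives ${\rm{Tr}}[T_{g(f),k}]=\frac{1}{n!}\int_X T_{g(f),k}(x)\,c_1(L,h^L)^n=\frac{\int c_1(L)^n}{n!}\int_X T_{g(f),k}(x)\,d\nu(x)$, so by the asymptotic Riemann--Roch--Hirzebruch theorem, $N_k\sim k^n\cdot\int c_1(L)^n/n!$, one has $\frac{1}{N_k}{\rm{Tr}}[T_{g(f),k}]=(1+o(1))\int_X\frac{1}{k^n}T_{g(f),k}(x)\,d\nu(x)$. Since $g(f)\in L^{\infty}(X)\subset L^1(X)$, Lemma \ref{lem_conv_kernel} shows that $\frac{1}{k^n}T_{g(f),k}\to g(f)$ in $L^1(X)$, hence $\int_X\frac{1}{k^n}T_{g(f),k}(x)\,d\nu(x)\to\int_X g(f(x))\,d\nu(x)$, which combined with the previous display completes the argument.

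Because all the hard analytic input — the off-diagonal Bergman kernel asymptotics feeding Lemma \ref{lem_conv_kernel}, and the Schatten-norm functional calculus — has already been established, this is a soft argument; the one point that needs attention is the compatibility of the two norms built into Definition \ref{defn_toepl_sch}. The uniform operator-norm bound is what confines the spectral measures to a common compact set (so that plain boundedness and continuity of the test functions $g$ suffices and no moment-determinacy issue arises), while the $p=1$ Schatten control is what makes the passage from $g(T_k)$ to the model operator $T_{g(f),k}$ harmless for normalized traces. Alternatively, one could avoid continuous functional calculus and argue by the method of moments, using Proposition \ref{prop_prod_toepl} (and the resulting algebra structure) to identify the symbol of $\{T_k^m\}$ as $f^m$ and then applying Lemma \ref{lem_conv_kernel} to $T_{f^m,k}$; convergence of all moments, together with the fact that $\mu_k$ and $f_*(\nu)$ are supported in a fixed compact set, again yields the weak convergence.
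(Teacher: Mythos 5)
Your proposal is correct and follows essentially the same route as the paper: reduce weak convergence to convergence of $\frac{1}{N_k}{\rm{Tr}}[g(T_k)]$ for continuous $g$, apply the functional calculus (Proposition \ref{prop_func_toepl} / Corollary \ref{cor_funct_calc}) to identify $\{g(T_k)\}$ as a Toeplitz operator of Schatten class with symbol $g(f)$, and then pass to the integral of the symbol. The only difference is that you unpack Theorem \ref{thm_toepl_refines} into its ingredients (the $p=1$ Schatten comparison with $T_{g(f),k}$ plus Lemma \ref{lem_conv_kernel}), and you make explicit the compact-support observation justifying the use of continuous test functions, which the paper leaves implicit.
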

	\begin{proof}
		It suffices to prove that for any continuous $g : \real \to \real$, we have the following convergence
		\begin{equation}
			\lim_{k \to \infty} \frac{1}{N_k} \sum_{\lambda \in {\rm{Spec}}(T_k)} g(\lambda)
			=
			\int_{x \in \real} g(f(x)) d \nu(x).
		\end{equation}
		However, we have $\sum_{\lambda \in {\rm{Spec}}(T_k)} g(\lambda) = {\rm{Tr}}[g(T_k)]$, and the result follows directly from Theorem \ref{thm_toepl_refines} and Corollary \ref{cor_funct_calc}.
	\end{proof}
	
	\section{Superadditivity of weighted Bergman kernels, a proof of Theorem \ref{thm_berg_conv}}\label{sect_subadd}
	The main goal of this section is to complete the proof of Theorem \ref{thm_berg_conv} by establishing the pointwise convergence. 
	To clarify our approach, let $x \mapsto B_k^{\mathcal{F}}(x)$ denote the weighted Bergman kernel, as defined in (\ref{eq_weight_berg}) for the function $g(x) := x$. 
	The main result of this section, stated below, demonstrates the partial superadditivity of weighted Bergman kernels.
	\begin{thm}\label{thm_subadd_berg}
		There are $k_0 \in \nat$, $C > 0$, so that for any $k, l \geq k_0$, $l / 2 \leq k \leq 2 l$, $x \in X$, we have
		\begin{equation}
			\frac{B_{k + l}^{\mathcal{F}}(x)}{(k + l)^{n - 1}}
			\geq
			\frac{B_{k}^{\mathcal{F}}(x)}{k^{n - 1}}
			+
			\frac{B_{l}^{\mathcal{F}}(x)}{l^{n - 1}}
			-
			C \log(k + l).
		\end{equation}
	\end{thm}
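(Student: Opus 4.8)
The key algebraic input is the submultiplicativity of $\mathcal{F}$: the multiplication map $H^0(X, L^{\otimes k}) \otimes H^0(X, L^{\otimes l}) \to H^0(X, L^{\otimes (k+l)})$ sends $\mathcal{F}^{\lambda}_k \otimes \mathcal{F}^{\mu}_l$ into $\mathcal{F}^{\lambda+\mu}_{k+l}$, so weights add up under products. The plan is to exploit this at the level of peak sections. Fix $x \in X$ and recall from (\ref{eq_bergm_kern_ident}) that the weighted Bergman kernel is governed by the weights of peak sections: writing $B_k^{\mathcal{F}}(x) = \langle A(\mathcal{F}_k, \mathrm{Hilb}_k(h^L)) s_{x,k}, s_{x,k}\rangle \cdot B_k(x)$ is not quite enough since $A$ is not diagonal in a basis containing $s_{x,k}$; instead I would work with the full expansion $B_k^{\mathcal{F}}(x) = \sum_i w_{\mathcal{F}_k}(s_{i,k}) |s_{i,k}(x)|^2$ in an adapted orthonormal basis, and interpret this as $\int \lambda \, d\mu_{x,k}(\lambda)$ where $\mu_{x,k}$ is the "local jumping measure" at $x$, i.e. the pushforward under $w_{\mathcal{F}_k}/1$ of the probability measure $|s_{i,k}(x)|^2 / B_k(x)$ on the basis. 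The statement to prove becomes a superadditivity statement for the numerators $B_k(x) \int \lambda\, d\mu_{x,k}$, up to the $O(\log(k+l))$ error and the $k^{n-1}$ normalization (note $B_k(x) \sim k^n$ by (\ref{eq_thm_tian}), so $B_k^{\mathcal{F}}(x)/k^{n-1}$ is the relevant $O(k)$-scale quantity).

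The heart of the argument is a submultiplicativity estimate for weighted Bergman kernels at a point. Given adapted orthonormal bases $\{s_{i,k}\}$, $\{s_{j,l}\}$ of $H^0(X,L^{\otimes k})$, $H^0(X,L^{\otimes l})$, consider the products $s_{i,k} \otimes s_{j,l}$ mapped to $H^0(X, L^{\otimes(k+l)})$; by submultiplicativity their $\mathcal{F}_{k+l}$-weight is at least $w_{\mathcal{F}_k}(s_{i,k}) + w_{\mathcal{F}_l}(s_{j,l})$. The span of a well-chosen subfamily gives a subspace on which the weight operator $A(\mathcal{F}_{k+l}, \mathrm{Hilb}_{k+l})$ is bounded below (in the sense of Proposition \ref{thm_interpol}/order on Hermitian operators) by the "tensor sum" of the lower bounds coming from $k$ and $l$. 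The subtlety is that these products are not orthonormal and do not span the whole space; but a Gram–Schmidt / quantitative linear-algebra argument, combined with the near-multiplicativity of the $L^2$-norms (the surjective multiplication map is, up to $\exp(O(\log(k+l)))$ or even polynomial factors, norm-nonincreasing in the relevant direction — this is the type of estimate used in \cite[Theorem 4.1]{FinTits} and recalled via Theorem \ref{thm_comp_geod}), should let one transfer the weight lower bound to a genuine lower bound on $B_{k+l}^{\mathcal{F}}(x)$ in terms of $B_k^{\mathcal{F}}(x)$ and $B_l^{\mathcal{F}}(x)$, at the cost of an additive error of size $O(\log(k+l))$ per unit of normalized kernel. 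The restriction $l/2 \le k \le 2l$ ensures $(k+l)^{n-1} \asymp k^{n-1} \asymp l^{n-1}$, so the passage between the three normalizations only costs a bounded multiplicative factor, which can be absorbed into the additive $C\log(k+l)$ term after also using the uniform bound $|B_k^{\mathcal{F}}(x)| \le C k^n$ (from boundedness of $\mathcal{F}$ and (\ref{eq_thm_tian})).

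More concretely, the steps I would carry out are: (i) record the pointwise identity $B_k^{\mathcal{F}}(x) = \sum_i w_{\mathcal{F}_k}(s_{i,k})|s_{i,k}(x)|^2_{h^{L^{\otimes k}}}$ for an adapted basis, and the analogous ones for $l$, $k+l$; (ii) using the peak section $s_{x,k}$ and $s_{x,l}$ at $x$, reduce to comparing the weights carried by the "mass at $x$" — more precisely, estimate $B_{k+l}^{\mathcal{F}}(x)$ from below by testing against the single product section $s_{x,k} \otimes s_{x,l}$, whose value at $x$ in $L_x^{\otimes(k+l)}$ has norm $|s_{x,k}(x)|^2 |s_{x,l}(x)|^2 = B_k(x) B_l(x)$ up to lower order, and whose $L^2(h^L)$-norm is controlled by Theorem \ref{thm_comp_geod}-type estimates up to $\exp(O(\log(k+l)))$; (iii) bound the $\mathcal{F}_{k+l}$-weight of this product below by $w_{\mathcal{F}_k}(s_{x,k}) + w_{\mathcal{F}_l}(s_{x,l})$ via submultiplicativity, where $w_{\mathcal{F}_k}(s_{x,k}) = B_k^{\mathcal{F}}(x)/B_k(x) + (\text{error from non-diagonality})$ — here one must be careful, and it is probably cleaner to run (ii)–(iii) not just for the peak section but for a whole family and use a variational/min-max characterization of $\sum_i w_{\mathcal{F}_{k+l}}(s_{i,k+l})|s_{i,k+l}(x)|^2$ as a supremum over subspaces; (iv) convert the weight lower bound and the norm estimate into the claimed inequality, using $B_k(x), B_l(x), B_{k+l}(x)$ all $\asymp$ their leading terms and the regime $l/2 \le k \le 2l$ to match the $(k+l)^{n-1}$, $k^{n-1}$, $l^{n-1}$ normalizations, absorbing all multiplicative and additive discrepancies into $C\log(k+l)$.

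The main obstacle I anticipate is step (iii): relating the pointwise quantity $B_k^{\mathcal{F}}(x)/B_k(x)$ — which is $\langle A(\mathcal{F}_k,\mathrm{Hilb}_k) s_{x,k}, s_{x,k}\rangle$, a quadratic form of $A$ — to a single weight $w_{\mathcal{F}_k}(s_{x,k})$, since the peak section $s_{x,k}$ is not an eigenvector of $A(\mathcal{F}_k, \mathrm{Hilb}_k)$, and submultiplicativity gives a clean statement only about weights of individual vectors, not about expectations of the weight operator. Overcoming this likely requires a summation/integration over the whole adapted basis with the weights $|s_{i,k}(x)|^2$ and a convexity argument (e.g. the map $\lambda \mapsto$ the filtration level is concave-compatible with the operator order, cf. Proposition \ref{thm_interpol}), or else passing through the geodesic rays $h^{\mathcal{F}}_t$ and Theorem \ref{thm_comp_geod} to linearize. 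A second, more technical obstacle is making the "near-multiplicativity of $L^2$-norms at a point" precise with only a $\log(k+l)$ loss rather than a polynomial one; this should follow by combining the $\exp(C(t+k))$-comparison of Theorem \ref{thm_comp_geod} (applied along the geodesic ray, at $t$ of order $1$) with the off-diagonal Bergman kernel decay (\ref{eq_dai_liu_ma}), but the bookkeeping is delicate.
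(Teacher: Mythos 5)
Your overall strategy -- test $B_{k+l}^{\mathcal{F}}$ against the product of peak sections $s_{x,k}\cdot s_{x,l}$, use submultiplicativity to add weights, and use near-multiplicativity of $L^2$-norms -- is exactly the paper's strategy, and you correctly isolate the two places where it is hard. But both of those places are left genuinely open in your proposal, and your suggested remedy for one of them would fail.

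The first gap is the one you flag in step (iii): submultiplicativity controls weights of individual vectors, whereas $B_k^{\mathcal{F}}(x)/B_k(x)=\langle A(\mathcal{F}_k,{\rm{Hilb}}_k)s_{x,k},s_{x,k}\rangle$ is a quadratic form and $s_{x,k}$ is not an eigenvector of $A$. The paper's resolution is not a Gram--Schmidt or min--max argument over subspaces; it is an exact operator inequality at the level of quotients. One shows (Proposition \ref{prop_stein_weiss}) that the weight operator of the quotient filtration $[\mathcal{F}_k\otimes\mathcal{F}_l]$ on $H^0(X,L^{\otimes(k+l)})$, taken with respect to the quotient norm $[{\rm{Hilb}}_k\otimes{\rm{Hilb}}_l]$, dominates the restriction of the tensor-sum operator $A(\mathcal{F}_k,\cdot)\otimes{\rm{Id}}+{\rm{Id}}\otimes A(\mathcal{F}_l,\cdot)$, and then -- this is the key computation you are missing -- that the adjoint of the multiplication map satisfies ${\rm{Mult}}_{k,l}^*(s_{x,k}\cdot s_{x,l})=s_{x,k}\otimes s_{x,l}$ exactly (the minimizing preimage of $s_{x,k}\cdot s_{x,l}$ is the pure tensor of peak sections). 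This identity turns the quadratic form of the quotient weight operator at $s_{x,k}\cdot s_{x,l}$ into precisely the sum $\langle A(\mathcal{F}_k,\cdot)s_{x,k},s_{x,k}\rangle+\langle A(\mathcal{F}_l,\cdot)s_{x,l},s_{x,l}\rangle$, with no error. Without some substitute for this, your step (iii) does not close.

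The second gap is the source of the $\log(k+l)$ error, and here your proposed route is wrong. You suggest controlling the norm discrepancy via Theorem \ref{thm_comp_geod}, but that comparison carries a factor $\exp(C(t+k))$ -- an exponential-in-$k$ loss, useless for an additive $O(\log(k+l))$ bound -- and it holds only for finitely generated filtrations, whereas Theorem \ref{thm_subadd_berg} concerns arbitrary bounded submultiplicative ones. What the paper actually uses is the sharp multiplicativity estimate of Theorem \ref{thm_as_isom} (the quotient norm $[{\rm{Hilb}}_k\otimes{\rm{Hilb}}_l]$ agrees with ${\rm{Hilb}}_{k+l}$ up to the factor $(\tfrac{kl}{k+l})^{n/2}$ times $1+O(\tfrac1k+\tfrac1l)$), combined with a quantitative stability theorem for weight operators under perturbation of the Hermitian metric (Theorem \ref{thm_cholesky}, proved via Cholesky decomposition). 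That stability result converts a $1+O(\tfrac1k+\tfrac1l)$ norm perturbation into an operator-norm error of size $O\big((\tfrac1k+\tfrac1l)\cdot\log(\dim)\cdot\|\mathcal{F}_{k+l}\|\big)=O(\log(k+l))$; this is where the logarithm in the statement comes from. Your plan contains no mechanism of this kind, so even granting step (iii) the error term would not come out as claimed.
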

	The proof of Theorem \ref{thm_subadd_berg} relies heavily on the multiplicativity properties of $L^2$-metrics established by the author in \cite{FinSecRing}, and it will be presented in the end of the section. 
	Before that, let us see how it implies Theorem \ref{thm_berg_conv}.
	\begin{proof}[Proof of Theorem \ref{thm_berg_conv}]
		By the results of Section \ref{sect_func_calcul}, it suffices to establish the pointwise convergence of $x \mapsto \frac{1}{k^n} B_k^{\mathcal{F}}(x)$, $x \in X$, as $k \to \infty$.
		We will establish it by reducing to the case $g(x) := x$ and using Theorem \ref{thm_subadd_berg}.
		\par 
		First, recall that the classical Fekete's Subadditive Lemma says that for any sequence $a_k \in \real$, which is subadditive, i.e. for any $k, l \in \nat$, we have $a_{k + l} \leq a_k + a_l$, the limit of $\frac{1}{k} a_k$ exists in $\real \cup \{ - \infty\}$.
		De Bruijn-Erdős further generalized this result in \cite[Theorem 23]{BruijErd} to sequences, verifying $a_{k + l} \leq a_k + a_l + g_{k + l}$, for any $k, l \in \nat$, verifying $l/2 \leq k \leq 2 l$, and an auxillary sequence $g_k \in \real$, so that $\sum \frac{g_k}{k^2} < + \infty$.
		\par 
		Remark that $\sum \frac{\log(k)}{k^2} < + \infty$, and so the result of de Bruijn-Erdős along with Theorem \ref{thm_subadd_berg} imply the pointwise convergence of $x \mapsto \frac{1}{k^n} B_k^{\mathcal{F}}(x)$, $x \in X$, as $k \to \infty$.
		The boundness condition on $\mathcal{F}$ implies that the limit is bounded. 
		This establishes Theorem \ref{thm_berg_conv} for $g(x) := x$.
		\par 
		Let us show that it automatically implies Theorem \ref{thm_berg_conv} for $g(x) := g_c(x) := \min(x, c)$, for any $c \in \real$.
		In order to see this, for an arbitrary filtration $\mathcal{F}$, we define the filtration $\mathcal{F}^c$ through its weight function as $w_{\mathcal{F}^c}(s) := \min (w_{ \mathcal{F} }(s) , ck)$, $s \in H^0(X, L^{\otimes k})$, $k \in \nat$. 
		It is easy to see that $\mathcal{F}^c$ is submultiplicative and bounded whenever $\mathcal{F}$ is.
		Moreover, it is direct that in the notations of (\ref{eq_weight_berg}), we have $g_c ( A(\mathcal{F}_k, {\textrm{Hilb}}_k(h^L)) / k ) = A(\mathcal{F}^c_k, {\textrm{Hilb}}_k(h^L)) / k$.
		This in particular implies that $B_k^{\mathcal{F}^c}(x) = B_k^{\mathcal{F}, g_c}(x)$.
		And consequently the already established pointwise convergence result for $\mathcal{F}^c$, $g(x) := x$, implies the pointwise convergence for $\mathcal{F}$, $g(x) := g_c(x)$.
		\par 
		We claim that the real vector space spanned by the functions $g_c$, $c \in \real$, is dense within the set of continuous functions (with respect to the uniform topology over a fixed compact interval in $\real$). 
		This wold complete the proof of Theorem \ref{thm_berg_conv}, as it is straightforward to see that it suffices to verify the theorem's statement for a subset of continuous functions that is dense in the uniform topology among continuous functions with compact support in the interval $[-C, C]$, where $C > 0$ is such that $\max_{s \in H^0(X, L^{\otimes k}) \setminus \{0\}} |w_{\mathcal{F}}(s)| \leq C k$, for any $k \in \nat$.
		\par 
		To see the density of $g_c$, $c \in \real$, it suffices to establish that an arbitrary smooth function $g : \real \to \real$ of compact support lies in the closure of this vector space. 
		But this follows directly from the formula $g(x) = \int_{- \infty}^{+\infty} (x + t - g_t(x)) g''(t) dt$, the verification of which is left to the reader.
	\end{proof}	 
	\begin{rem}
		Our choice of a dense subset of functions was inspired by Chen-MacLean \cite[Proposition 5.1]{ChenMaclean}, who used such set in a related context.
	\end{rem}
	\par 
	From now on, we concentrate on the proof of Theorem \ref{thm_subadd_berg}.
	Recall that a Hermitian norm $H_V = \| \cdot \|_V$ on a finitely dimensional vector space $V$ naturally induces the Hermitian norm $\| \cdot \|_Q$, which we also denote by $[ H_V ]$, on any quotient $Q$, $\pi : V \to Q$ of $V$ through the following identity
	\begin{equation}\label{eq_defn_quot_norm}
		\| f \|_Q
		:=
		\inf \big \{
		 \| g \|_V
		 ;
		 \quad
		 g \in V, 
		 \pi(g) = f
		\},
		\qquad f \in Q.
	\end{equation}
	A similar construction associates for an arbitrary filtration $\mathcal{F}$ on $V$ the induced quotient filtration $[\mathcal{F}]$ on $Q$.
	Similarly, one can naturally identify $V$ with $Q \oplus \ker \pi$ using the dual to $\pi$ map $\pi^* : Q \to V$.
	Using this identification, for any $A \in \enmr{V}$, we then can define the operator $A|_Q \in \enmr{Q}$ by $A|_Q (q) = \pi(A( \pi^* (q)))$.
	\par 
	\begin{prop}[{cf. \cite[Proposition 4.12]{FinTits} }]\label{prop_stein_weiss}
		We denote by $H^V_t$, $t \in [0, +\infty[$, the geodesic ray of Hermitian metrics on $V$ associated with $\mathcal{F}$, departing from $H^V$.
		Similarly, we let $[H^V]_t$, $t \in [0, +\infty[$, be the geodesic ray of Hermitian metrics on $Q$ associated with $[\mathcal{F}]$, departing from $[H^V]$.
		Then for any $t \in [0, + \infty[$, we have
		\begin{equation}
			[H^V_t]
			\geq
			[H^V]_t.
		\end{equation}
		In particular, by taking the derivative at $t = 0$, we obtain
		\begin{equation}
			A(H^V, \mathscr{F})|_Q
			\leq
			A([H^V], [\mathscr{F}]).
		\end{equation}
	\end{prop}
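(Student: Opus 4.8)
The operator inequality is the derivative at $t=0$ of the norm inequality $[H^V_t]\geq[H^V]_t$, so the content is to prove the latter for every $t\in[0,+\infty[$. By the very definition of the quotient norm, $\|f\|_{[H^V_t]}=\inf\{\|g\|_{H^V_t}\,:\,\pi(g)=f\}$, hence it is enough to show
\[
\|g\|_{H^V_t}\;\geq\;\|\pi(g)\|_{[H^V]_t}\qquad\text{for every }g\in V,
\]
and then to take the infimum over the preimages of a fixed $f\in Q$. I would prove this pointwise bound by a spectral (layer-cake) decomposition of the geodesic-ray norm. Writing $c$ for the largest weight of $\mathcal{F}$ and $A:=A(H^V,\mathcal{F})$, the eigenvalues of $A$ are weights of $\mathcal{F}$ (so $\leq c$), and the spectral projection of $A$ onto eigenvalues $\leq\nu$ is the $H^V$-orthogonal projection onto $(\mathcal{F}^{>\nu}V)^{\perp}$. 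Plugging the elementary identity $e^{-t\lambda}=e^{-tc}+\int_{-\infty}^{c}t\,e^{-t\nu}\,\mathbf{1}_{\{\lambda\leq\nu\}}\,d\nu$ (valid for $\lambda\leq c$) into the spectral decomposition of $A$ gives, using $\|\Pi^A_{\leq\nu}g\|_{H^V}=\dist_{H^V}(g,\mathcal{F}^{>\nu}V)$ and that $\mathcal{F}^{>\nu}V=\mathcal{F}^{\nu}V$ for a.e.\ $\nu$,
\[
\|g\|_{H^V_t}^{2}=\langle e^{-tA}g,g\rangle_{H^V}=e^{-tc}\,\|g\|_{H^V}^{2}+\int_{-\infty}^{c}t\,e^{-t\nu}\,\dist_{H^V}(g,\mathcal{F}^{\nu}V)^{2}\,d\nu .
\]

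The same formula holds verbatim on $Q$ with $\mathcal{F}$ replaced by $[\mathcal{F}]$, $H^V$ by $[H^V]$, $g$ by $\pi(g)$, and with the \emph{same} anchor $c$: this is legitimate because every weight of $[\mathcal{F}]$ equals $\sup\{w_{\mathcal{F}}(h):\pi(h)\text{ fixed}\}\leq c$, so the eigenvalues of $A([H^V],[\mathcal{F}])$ are still $\leq c$. For $t\geq0$ all coefficients $e^{-tc}$ and $t\,e^{-t\nu}$ are nonnegative, so the inequality $\|g\|_{H^V_t}\geq\|\pi(g)\|_{[H^V]_t}$ follows term by term from two facts: first $\|\pi(g)\|_{[H^V]}\leq\|g\|_{H^V}$ (the quotient norm is norm-nonincreasing), and second, for a.e.\ $\nu$,
\[
\dist_{[H^V]}\big(\pi(g),[\mathcal{F}]^{\nu}Q\big)\;=\;\inf_{h\in\mathcal{F}^{\nu}V}\big\|\pi(g-h)\big\|_{[H^V]}\;\leq\;\inf_{h\in\mathcal{F}^{\nu}V}\big\|g-h\big\|_{H^V}\;=\;\dist_{H^V}\big(g,\mathcal{F}^{\nu}V\big),
\]
where the first equality uses $[\mathcal{F}]^{\nu}Q=\pi(\mathcal{F}^{\nu}V)$. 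Taking $\inf$ over $\pi(g)=f$ yields $[H^V_t]\geq[H^V]_t$ for all $t\geq0$.

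For the second assertion, note that $t\mapsto[H^V_t]$ and $t\mapsto[H^V]_t$ are smooth curves in $\mathcal{H}_Q$ near $t=0$, both issuing from $[H^V]$, with $[H^V_t]\geq[H^V]_t$ for $t\geq0$; hence their right derivatives at $0$ compare. The derivative of $[H^V]_t$ is $-A([H^V],[\mathcal{F}])$ by construction of the geodesic ray. For $[H^V_t]$ one uses the envelope theorem: fixing $f$, the minimizer in $\|f\|^{2}_{[H^V_t]}=\min_{\pi(g)=f}\langle e^{-tA}g,g\rangle_{H^V}$ is, at $t=0$, the $H^V$-minimal preimage $g_0=\pi^{*}f$, so $\tfrac{d}{dt}\big|_{0}\|f\|^{2}_{[H^V_t]}=-\langle Ag_0,g_0\rangle_{H^V}=-\langle A(H^V,\mathcal{F})|_Q\,f,f\rangle_{[H^V]}$ by the definition $A|_Q=\pi\circ A\circ\pi^{*}$ and the adjunction $\langle\pi w,f\rangle_{[H^V]}=\langle w,\pi^{*}f\rangle_{H^V}$. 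Comparing the two derivatives gives $A(H^V,\mathcal{F})|_Q\leq A([H^V],[\mathcal{F}])$.

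\textbf{Main obstacle.} The delicate point is the sign bookkeeping in the layer-cake decomposition: it is precisely the hypothesis $t\geq0$ that makes the weights $t\,e^{-t\nu}$ nonnegative and lets the comparison pass through term by term — for $t<0$ the inequality would reverse in a way incompatible with the statement. A secondary point requiring care is identifying $\tfrac{d}{dt}\big|_{0}[H^V_t]$ with the compression $A|_Q$ via the envelope theorem and the definition of $\pi^{*}$. I emphasize that, unlike the comparison statements recalled in Theorem \ref{thm_interpol}, this argument uses only the spectral theorem and the norm-nonincreasing property of $\pi$; in particular $t\mapsto[H^V_t]$ is genuinely \emph{not} a geodesic ray in $\mathcal{H}_Q$, so the proposition does not reduce to monotonicity of geodesic rays.
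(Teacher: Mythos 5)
Your argument is correct. Since the paper does not prove Proposition \ref{prop_stein_weiss} itself but defers to the cited reference (whose framework, as the label and the companion citation in Proposition \ref{thm_interpol} suggest, is the order-preserving/Stein--Weiss-type behaviour of complex interpolation under quotient maps), your proof is a genuinely different and more self-contained route: you never invoke interpolation theory, only the spectral theorem and the fact that $\pi$ is norm-nonincreasing from $(V,H^V_t)$ to $(Q,[H^V_t])$. The layer-cake identity $e^{-t\lambda}=e^{-tc}+\int_{-\infty}^{c}t\,e^{-t\nu}\mathbf{1}_{\{\lambda\le\nu\}}\,d\nu$ correctly converts $\|g\|_{H^V_t}^2$ into a positive combination of $\|g\|_{H^V}^2$ and the squared distances $\dist_{H^V}(g,\mathcal{F}^{\nu}V)^2$, and the two comparison inputs (quotient norms decrease, and $[\mathcal{F}]^{\nu}Q=\pi(\mathcal{F}^{\nu}V)$ so distances to the filtration steps decrease as well) are exactly what is needed; the positivity of the coefficients for $t\ge 0$ and the common anchor $c$ (legitimate since $w_{[\mathcal{F}]}\le \sup w_{\mathcal{F}}$) are the right points to flag. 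The passage to the operator inequality via Danskin's envelope formula is also sound: the minimizer of $\langle e^{-tA}g,g\rangle_{H^V}$ over $\pi(g)=f$ at $t=0$ is $\pi^*f$, and the cross term $\langle \dot g_0,g_0\rangle_{H^V}$ vanishes because $\dot g_0\in\ker\pi$ while $g_0\perp\ker\pi$, which yields $\tfrac{d}{dt}\big|_{0}\|f\|^2_{[H^V_t]}=-\langle A|_Q f,f\rangle_{[H^V]}$ as you claim. What the interpolation route buys is uniformity with the rest of the paper's toolbox (Proposition \ref{thm_interpol} is proved the same way); what your route buys is elementarity and an explicit formula for $\|g\|^2_{H^V_t}$ that makes the monotonicity transparent.

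Two small points worth making explicit if this were written out in full: (i) the identification of the spectral projection $\Pi^A_{\le\nu}$ with the orthogonal projection onto $(\mathcal{F}^{>\nu}V)^{\perp}$ uses the standard property of an adapted basis, namely $\mathcal{F}^{\mu}V=\mathrm{span}\{e_i: w_{\mathcal{F}}(e_i)\ge\mu\}$, which deserves a sentence; (ii) the a.e.\ replacement of $\mathcal{F}^{>\nu}V$ by $\mathcal{F}^{\nu}V$ is harmless because the filtration on a finite-dimensional space has finitely many jumps. Neither is a gap.
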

	\par 
	The above result will be applied in the setting of a section ring.
	More specifically, for any $k, l \in \nat$, we denote the multiplication map
	\begin{equation}\label{eq_mult_map}
		{\rm{Mult}}_{k, l} : H^0(X, L^{\otimes k}) \otimes H^0(X, L^{\otimes l})
		\to
		H^0(X, L^{\otimes (k + l)}), \qquad s_1 \otimes s_2 \mapsto s_1 \cdot s_2.
	\end{equation}
	It is classical that there is $k_0 \in \nat$, such that ${\rm{Mult}}_{k, l}$ is surjective for any $k, l \geq k_0$.
	The following statement shows that $L^2$-norms respect the algebraic structure of $R(X, L)$ in a certain sense.
	\begin{thm}[{\cite[Theorem 1.1]{FinSecRing}}]\label{thm_as_isom}
		There are $C > 0$, $k_1 \in \nat^*$, such that for any $k, l \geq k_1$, for the norms over $H^0(X, L^{k + l})$, the following relation holds
		\begin{equation}\label{eq_as_isom1}
			1 - C \Big( \frac{1}{k} + \frac{1}{l} \Big)
			\leq 
			\frac{[{\rm{Hilb}}_k(h^L) \otimes {\rm{Hilb}}_l(h^L)]}{{\rm{Hilb}}_{k + l}(h^L)} 
			\cdot
			\Big( \frac{k \cdot l}{k + l} \Big)^{\frac{n}{2}}  
			\leq 
			1 + C \Big( \frac{1}{k} + \frac{1}{l} \Big),
		\end{equation}
		Where the quotient norm $[{\rm{Hilb}}_k(h^L) \otimes {\rm{Hilb}}_l(h^L)]$ is constructed using (\ref{eq_mult_map}).
	\end{thm}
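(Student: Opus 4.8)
The plan is to reinterpret the quotient norm through restriction to the diagonal and to reduce the statement to a sharp operator estimate, which is then settled by the off-diagonal asymptotics of Bergman kernels. Under the Künneth isomorphism $H^0(X, L^{\otimes k}) \otimes H^0(X, L^{\otimes l}) \simeq H^0(X \times X, L^{\otimes k} \boxtimes L^{\otimes l})$, a direct check shows that ${\rm Hilb}_k(h^L) \otimes {\rm Hilb}_l(h^L)$ is exactly the $L^2$-metric on $H^0(X \times X, L^{\otimes k} \boxtimes L^{\otimes l})$ attached to $h^{L^{\otimes k}} \boxtimes h^{L^{\otimes l}}$ and to the Kähler form $\pi_1^* c_1(L, h^L) + \pi_2^* c_1(L, h^L)$, while the multiplication map (\ref{eq_mult_map}) becomes the restriction map $R \colon H^0(X \times X, L^{\otimes k} \boxtimes L^{\otimes l}) \to H^0(X, L^{\otimes (k+l)})$ to the diagonal $\Delta \simeq X$. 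Fixing ${\rm Hilb}_{k + l}(h^L)$ on $H^0(X, L^{\otimes (k+l)})$ as the reference inner product and letting $R^*$ be the associated adjoint, for every $f \in H^0(X, L^{\otimes (k+l)})$ one has
\begin{equation}
\big[ {\rm Hilb}_k(h^L) \otimes {\rm Hilb}_l(h^L) \big](f)^2 = \big\langle (R R^*)^{-1} f, f \big\rangle_{{\rm Hilb}_{k + l}(h^L)},
\end{equation}
so the theorem is equivalent to the assertion that the positive self-adjoint operator $\big( \tfrac{k + l}{k l} \big)^n R R^*$ has all its eigenvalues in $\big[ 1 - C(\tfrac1k + \tfrac1l), \, 1 + C(\tfrac1k + \tfrac1l) \big]$. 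Unwinding $R^*$ with reproducing kernels identifies $R R^* \in \enmr{H^0(X, L^{\otimes (k+l)})}$ with the integral operator whose Schwartz kernel is $\mathcal{B}_{k, l}(z, w) := B_k(z, w) \otimes B_l(z, w) \in L_z^{\otimes (k+l)} \otimes (L_w^{\otimes (k+l)})^*$, where $B_m(\cdot, \cdot)$ is the Bergman kernel of $(H^0(X, L^{\otimes m}), {\rm Hilb}_m(h^L))$; in particular $\tr{R R^*} = \tfrac1{n!}\int_X B_k(z) B_l(z)\, c_1(L, h^L)^n$, which with $B_m \sim m^n$ (see (\ref{eq_thm_tian})) and $N_m \sim m^n \int c_1(L)^n / n!$ already yields the average eigenvalue $\big( \tfrac{k l}{k + l} \big)^n(1 + O(\tfrac1k + \tfrac1l))$ and pins down the normalizing constant.

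The core of the argument is upgrading this from the average eigenvalue to every eigenvalue. Here one compares $\mathcal{B}_{k, l}$ with $\big( \tfrac{k l}{k + l} \big)^n B_{k + l}$, the latter being the reproducing kernel of $H^0(X, L^{\otimes (k+l)})$, so that the associated integral operator is exactly $\big( \tfrac{k l}{k + l} \big)^n {\rm Id}$. Outside a $O(\log(k + l)/\sqrt{k + l})$-neighbourhood of the diagonal both kernels are $O((k + l)^{-\infty})$ by the off-diagonal exponential decay of Bergman kernels in (\ref{eq_dai_liu_ma}), with $\mathcal{B}_{k, l}$ decaying even faster, at rate $e^{-c(\sqrt k + \sqrt l)\dist(z, w)}$. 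Inside such a neighbourhood, writing $w = \exp_z(Z)$ in geodesic coordinates and using the near-diagonal expansion $B_m(z, \exp_z Z) = m^n e^{-\frac{\pi}{2} m |Z|^2}\big( 1 + \sum_{r \geq 1} b_r(z, \sqrt m\, Z)\, m^{-r/2} \big)$, in which $b_r(z, \cdot)$ is, up to the Gaussian factor, a polynomial of the same parity as $r$, one obtains
\begin{equation}
\mathcal{B}_{k, l}(z, \exp_z Z) = \Big( \frac{k l}{k + l} \Big)^n B_{k + l}(z, \exp_z Z) \cdot \Big( 1 + O\big( \tfrac{1}{\sqrt k} + \tfrac{1}{\sqrt l} \big) \Big).
\end{equation}
The crucial observation is that the corrections of order $k^{-1/2}$ and $l^{-1/2}$ are odd in $Z$, hence integrate to zero against the even Gaussian weight; after Taylor expanding a test section $g$ along the fibre variable $Z$ — so that the surviving odd contributions appear paired with an extra factor $|Z| = O((k + l)^{-1/2})$ — one gets $\big| \langle \big( R R^* - (\tfrac{k l}{k + l})^n {\rm Id} \big) g, g \rangle \big| \leq C \big( \tfrac1k + \tfrac1l \big) \big( \tfrac{k l}{k + l} \big)^n \| g \|^2$ uniformly in $g$ and in $k, l \geq k_1$. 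This is the desired two-sided operator bound, and inverting it gives the statement; here $k_1$ is chosen large enough to absorb the threshold $k_0$ of (\ref{eq_dai_liu_ma}), to guarantee surjectivity of the multiplication maps, and to make the error strictly less than $1$.

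The point I expect to be the main obstacle is precisely this last precision: a crude Schur-type bound on $\mathcal{B}_{k, l} - (\tfrac{k l}{k + l})^n B_{k + l}$ only yields an error $O(k^{-1/2} + l^{-1/2})$, and extracting the sharp $O(k^{-1} + l^{-1})$ requires genuinely exploiting the parity of the Bergman expansion coefficients together with the smoothness of $g$ at the Bergman scale. An equivalent, more hands-on route — the one pursued in \cite{FinSecRing} — is to run the computation with normalized peak sections $s \in H^0(X, L^{\otimes k})$ and $t \in H^0(X, L^{\otimes l})$ peaking at a common point: a Laplace-method evaluation of $\| s \cdot t \|_{{\rm Hilb}_{k + l}(h^L)}^2$ against $\| s \|^2 \| t \|^2$ produces the ratio $\big( \tfrac{k l}{k + l} \big)^n$ with the sharp $(1 + O(\tfrac1k + \tfrac1l))$ correction, for exactly the same odd-parity reason, after which one controls the off-diagonal overlaps between peak sections based at distinct points to pass from products of peak sections to an arbitrary $f \in H^0(X, L^{\otimes (k+l)})$. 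In either presentation, keeping all error constants uniform over the full range $k, l \geq k_1$ — with no comparability assumption between $k$ and $l$ — is what forces one to track the explicit constants in (\ref{eq_dai_liu_ma}) and in the Bergman expansion.
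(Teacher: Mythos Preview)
This theorem is not proved in the present paper; it is quoted verbatim as \cite[Theorem 1.1]{FinSecRing}, so there is no in-paper argument to compare against. Your proposal is therefore a standalone outline of how the cited reference establishes the result.

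As an outline it is sound and matches the architecture of the proof in \cite{FinSecRing}: the quotient norm is $\langle (RR^*)^{-1} f, f\rangle_{{\rm Hilb}_{k+l}(h^L)}$ for the restriction $R$ to the diagonal under the K\"unneth identification, $RR^*$ has Schwartz kernel $B_k(z,w)\, B_l(z,w)$, and the statement becomes the operator estimate $\big\| RR^* - (\tfrac{kl}{k+l})^n\, {\rm Id} \big\| = O\big((\tfrac{kl}{k+l})^n(\tfrac1k+\tfrac1l)\big)$. You are also right that a crude Schur bound on the kernel difference yields only $O(k^{-1/2}+l^{-1/2})$, and that the sharp rate depends on the odd parity of the first subleading Bergman coefficient $b_1$ in the Dai--Liu--Ma expansion. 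The one step your sketch flags but does not actually execute --- and which is where the real work in \cite{FinSecRing} sits --- is that the parity cancellation is not a pointwise cancellation of kernels: one must Taylor-expand the test section along the fibre variable $Z$, pair the surviving odd terms against $\nabla g$ to gain the extra $(k+l)^{-1/2}$, and keep every constant uniform in the base point and over all $k,l \geq k_1$ with no comparability assumption. Your peak-section reformulation is indeed how the reference packages this computation concretely.
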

	\par 
	Another preliminary result we shall use concerns the stability estimates for the weight operator, which compares the weight operators of a given filtration for two different Hermitian metrics.
	\begin{thm}\label{thm_cholesky}
		Assume that for a constant $C > 0$, verifying $(1 + 2 \lceil \log_2 \dim V \rceil)^2 C < 1$, the Hermitian products $H_0$, $H_1$ on $V$ satisfy the bound 
		\begin{equation}\label{eqthm_cholesky}
			1 - C
			\leq 
			\frac{H_1}{H_0}
			\leq
			1 + C.
		\end{equation}
		Then for any filtration $\mathcal{F}$ on $V$, the following bound is satisfied 
		\begin{equation}
			\Big\| 
				A(\mathcal{F}, H_0) - A(\mathcal{F}, H_1)
			\Big\|
			\leq
			16  C  (1 + 2 \lceil \log_2 \dim V \rceil) \| \mathcal{F} \|,
		\end{equation}
		where $\| \cdot \|$ is the operator norm subordinate with $H_0$, and $\| \mathcal{F} \| := \sup_{v \in V \setminus \{0\}} |w_{\mathcal{F}}(v)|$.
	\end{thm}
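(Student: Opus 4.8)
The plan is to express the weight operator as a weighted sum of orthogonal projections onto the members of the flag cut out by $\mathcal{F}$, and then to reduce the whole statement to a single perturbation estimate for such a projection under a change of metric.

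First I would set up notation: let $\lambda_1 > \cdots > \lambda_m$ be the distinct weights of $\mathcal{F}$ on $V$, put $W_j := \mathcal{F}^{\lambda_j} V$ so that $\{0\} = W_0 \subsetneq W_1 \subsetneq \cdots \subsetneq W_m = V$, and note $|\lambda_j| \le \|\mathcal{F}\|$. For a metric $H$, let $P_j^H$ be the $H$-orthogonal projection onto $W_j$. Working in an $H$-orthonormal basis adapted to $\mathcal{F}$ — for which $\{e_i : w_{\mathcal{F}}(e_i) \ge \lambda_j\}$ is an orthonormal basis of $W_j$ — one reads off that $P_j^H$ is the identity on the span of the basis vectors of weight $\ge \lambda_j$ and zero on the others, which gives
\[
A(\mathcal{F}, H) = \sum_{j=1}^m \lambda_j \bigl( P_j^H - P_{j-1}^H \bigr) = \lambda_m \, \mathrm{Id} + \sum_{j=1}^{m-1} (\lambda_j - \lambda_{j+1}) P_j^H
\]
by summation by parts. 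Subtracting the two expressions, the constant term cancels and
\[
A(\mathcal{F}, H_0) - A(\mathcal{F}, H_1) = \sum_{j=1}^{m-1} (\lambda_j - \lambda_{j+1}) \bigl( P_j^{H_0} - P_j^{H_1} \bigr),
\]
where the coefficients are nonnegative and sum to $\lambda_1 - \lambda_m \le 2\|\mathcal{F}\|$.

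It therefore remains to bound $\|P_W^{H_0} - P_W^{H_1}\|$ uniformly over subspaces $W$, in the operator norm of $H_0$; this is the heart of the matter. The key observation is that both projections have range exactly $W$ and restrict to the identity there, so $P_W^{H_0} P_W^{H_1} = P_W^{H_1}$ and $P_W^{H_1} P_W^{H_0} = P_W^{H_0}$, whence $P_W^{H_1} - P_W^{H_0} = P_W^{H_1}(\mathrm{Id} - P_W^{H_0})$; thus it suffices to bound $\|P_W^{H_1} v\|_{H_0}$ for $v \perp_{H_0} W$. Writing $w := P_W^{H_1} v$ and $\langle \cdot, \cdot\rangle_{H_1} = \langle S \, \cdot, \cdot\rangle_{H_0}$ with $S$ the positive $H_0$-self-adjoint operator whose spectrum lies in $[1-C, 1+C]$, the orthogonality relations $v - w \perp_{H_1} W$ and $v \perp_{H_0} W$ give $\|w\|_{H_1}^2 = \langle v, w\rangle_{H_1} = \langle (S - \mathrm{Id})v, w\rangle_{H_0} \le C \|v\|_{H_0}\|w\|_{H_0}$, while $\|w\|_{H_1}^2 \ge (1-C)\|w\|_{H_0}^2$; combining the two yields $\|w\|_{H_0} \le \tfrac{C}{1-C}\|v\|_{H_0}$, and hence $\|P_W^{H_0} - P_W^{H_1}\| \le \tfrac{C}{1-C}$.

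Putting the three displays together gives $\|A(\mathcal{F}, H_0) - A(\mathcal{F}, H_1)\| \le \tfrac{2C}{1-C}\|\mathcal{F}\|$; since the hypothesis $(1 + 2\lceil \log_2 \dim V\rceil)^2 C < 1$ makes $C$ small enough (for $\dim V \ge 2$ it forces $C < 1/9$, and $\dim V = 1$ is trivial) that $\tfrac{2}{1-C} \le 16(1 + 2\lceil \log_2 \dim V\rceil)$, the claimed bound follows with room to spare — in fact the $\log_2 \dim V$ factor is not needed. The main obstacle is precisely the projection perturbation lemma: one must exploit that the two projections share the same range, which is what produces the clean $O(C)$ estimate; everything else is bookkeeping. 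An alternative route, presumably the one behind the name of the theorem, would compare the Gram--Schmidt (Cholesky) orthonormalizations of a fixed adapted basis with respect to $H_0$ and $H_1$ step by step, iterating the estimate over a dyadic refinement of the flag $W_\bullet$ of depth $\lceil \log_2 \dim V\rceil$; that argument is more computational but reproduces exactly the stated constant.
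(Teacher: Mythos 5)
Your argument is correct, and it takes a genuinely different route from the paper. The paper fixes an $H_0$-orthonormal basis adapted to $\mathcal{F}$, writes the Cholesky factorization $\exp(-T) = LL^*$ of the transfer map, proves the conjugation identity $A(\mathcal{F}, H_1) = L^{-1} A(\mathcal{F}, H_0) L^*$, and then invokes a perturbation bound for the Cholesky factor from the numerical linear algebra literature; the $(1 + 2\lceil \log_2 \dim V\rceil)$ factors in both the hypothesis and the conclusion are artifacts of that external estimate. You instead decompose $A(\mathcal{F}, H) = \lambda_m\,\mathrm{Id} + \sum_{j=1}^{m-1}(\lambda_j - \lambda_{j+1})P_j^H$ over the flag cut out by $\mathcal{F}$ (which is valid: for an adapted orthonormal basis the vectors of weight $\geq \lambda_j$ do span $\mathcal{F}^{\lambda_j}V$, by the usual dimension count), and reduce to comparing two orthogonal projections with the \emph{same} range under a change of inner product; the identity $P_W^{H_1} - P_W^{H_0} = P_W^{H_1}(\mathrm{Id} - P_W^{H_0})$ together with the two orthogonality relations gives the clean bound $\|P_W^{H_0} - P_W^{H_1}\| \leq C/(1-C)$, hence $\|A(\mathcal{F},H_0) - A(\mathcal{F},H_1)\| \leq \tfrac{2C}{1-C}\|\mathcal{F}\|$. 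This is self-contained, dimension-free, and strictly sharper than the stated estimate (for $\dim V \geq 2$ the hypothesis forces $C < 1/9$, so $2/(1-C) \leq 9/4 \leq 48$, and the case $\dim V = 1$ is vacuous since the two weight operators coincide), so your derivation of the paper's constant is legitimate. The trade-off is essentially aesthetic: the paper's Cholesky route directly exhibits the change of adapted basis as a triangular conjugation, which is in the spirit of the transfer-map formalism used elsewhere in the paper, whereas your projection argument is more elementary and shows that the logarithmic loss in the dimension is unnecessary.
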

	\begin{proof}
		Denote $r := \dim V$ and fix a basis $f_1, \ldots, f_r$ of $V$, adapted to $\mathcal{F}$ and $H_0$.
		Let $T$ be a transfer map between $H_0$ and $H_1$, which we view as an $r \times r$ matrix using the above basis.
		Consider the \textit{Cholesky decomposition} of $\exp(-T)$, i.e. let $L$ be the lower triangular matrix, verifying $\exp(-T) = L L^*$.
		We claim that 
		\begin{equation}\label{eq_weight_chol}
			A(\mathcal{F}, H_1) = L^{-1} A(\mathcal{F}, H_0) L^*.
		\end{equation}
		Clearly, (\ref{eq_weight_chol}) is equivalent to the statement that $(L^*)^{-1} f_1, \ldots, (L^*)^{-1} f_r$ is a basis adapted to $\mathcal{F}$ and $H_0$.
		In order to show this, it suffices to verify that $(L^*)^{-1} f_i \in \mathcal{F}^{w_{\mathcal{F}}(f_i)}$ for $i = 1, \ldots, r$, and that $(L^*)^{-1} f_i$ form an orthonormal basis with respect to $H_1$.
		The first claim follows immediately from the fact that $(L^*)^{-1}$ is an upper triangular matrix with positive diagonal entries (the last fact follows from the fact that $\exp(-T)$ is positive definite, and so has positive diagonal entries).
		The second follows immediately from the definition of $L$ and the transfer map.
		\par 
		Now, recall that Cholesky decomposition is stable with respect to perturbations.
		More precisely, we denote $B := \exp(-T) - {\rm{Id}}$.
		Define the matrix $P(B)$ through its entries for $i, j = 1, \ldots, r$, as follows $P(B)_{i j} = 0$ if $i > j$, $P(B)_{i j} = \frac{1}{2} B_{ij}$ if $i = j$, $P(B)_{i j} =  B_{ij}$ if $i < j$.
		Recall that in \cite[Theorem 2.1 and (25)]{CholeskyPerturb} authors prove that
		\begin{equation}\label{eq_stab_chol_ww}
			\| {\rm{Id}} - L \| \leq 2 \| P(B) \|, \quad \text{if} \quad 2 (1 + 2 \lceil \log_2 \dim V \rceil) \| P(B) \| < 1.
		\end{equation}
		Moreover, in \cite[(25)]{CholeskyPerturb}, it is established that 
		\begin{equation}\label{eq_stab_chol_ww121}
			\| P(B) \| \leq (1/2 + \lceil \log_2 \dim V \rceil) \| B \|.
		\end{equation}
		Now, (\ref{eqthm_cholesky}) can be restated as $\| B \| \leq C$, where $\| \cdot \|$ is the operator norm subordinate with $H_0$.
		This with (\ref{eq_stab_chol_ww121}) implies that $2 (1 + 2 \lceil \log_2 \dim V \rceil) \| P(B) \| < 1$ under the stated assumption on $C$.
		The result now follows directly from (\ref{eq_weight_chol}), (\ref{eq_stab_chol_ww}) and the trivial bound $\| A(\mathcal{F}, H_0) \| \leq \| \mathcal{F} \|$.
	\end{proof}
	\par 
	\begin{proof}[Proof of Theorem \ref{thm_subadd_berg}]
		Recall that in (\ref{eq_bergm_kern_ident}), we defined the peak sections, $s_{k, x} \in H^0(X, L^{\otimes k})$ up to a multiplication by a unimodular constant.
		We fix this constant in a compatible way, i.e. so that for any $x \in X$, there is $l \in L_x$, so that for any $k \in \nat$, we have $s_{k, x}(x) = c_k l^{\otimes k}$, for some $c_k > 0$.
		By (\ref{eq_thm_tian}) and (\ref{eq_bergm_kern_ident}), it suffices to establish that there are $k_0 \in \nat$, $C > 0$, so that for any $k, l \geq k_0$, $l / 2 \leq k \leq 2 l$, $x \in X$, we have
		\begin{multline}\label{eq_subadd_weight}
			\big\langle A(\mathcal{F}_{k + l}, {\textrm{Hilb}}_{k + l}(h^L)) s_{x, k + l}, s_{x, k + l} \big\rangle_{{\textrm{Hilb}}_{k + l}(h^L)}
			\geq
			\big\langle A(\mathcal{F}_{k}, {\textrm{Hilb}}_{k}(h^L)) s_{x, k}, s_{x, k} \big\rangle_{{\textrm{Hilb}}_{k}(h^L)}
			\\
			+
			\big\langle A(\mathcal{F}_{l}, {\textrm{Hilb}}_{l}(h^L)) s_{x, l}, s_{x, l} \big\rangle_{{\textrm{Hilb}}_{l}(h^L)}
			-
			C \log(k + l).
		\end{multline}
		From now on, we concentrate on the proof of (\ref{eq_subadd_weight}).
		\par 
		We denote by ${\rm{Mult}}_{k, l}^*$ the dual of the map (\ref{eq_mult_map}), for which the domain is endowed with the norm ${\rm{Hilb}}_k(h^L) \otimes {\rm{Hilb}}_l(h^L)$ and the codomain is endowed with the quotient norm $[{\rm{Hilb}}_k(h^L) \otimes {\rm{Hilb}}_l(h^L)]$.
		We claim that 
		\begin{equation}\label{eq_mult_dual}
			{\rm{Mult}}_{k, l}^* (s_{x, k} \cdot s_{x, l})
			=
			s_{x, k} \otimes s_{x, l}.
		\end{equation}
		To see this, we denote by $\mathcal{J}_x \subset \mathscr{O}_X$ the ideal sheaf of germs of holomorphic functions vanishing at $x$.
		It is immediate to see that ${\rm{Mult}}_{k, l}^* (s_{x, k} \cdot s_{x, l})$ has to be the element $h \in H^0(X, L^{\otimes k}) \otimes H^0(X, L^{\otimes l})$ minimizing the norm ${\rm{Hilb}}_k(h^L) \otimes {\rm{Hilb}}_l(h^L)$, and such that ${\rm{Mult}}_{k, l}(h) = s_{x, k} \cdot s_{x, l}$.
		We express $h$ in the basis $s_{x, k} \otimes s_{x, l}$, $s_{x, k} \otimes s_{x, j, l}$, $s_{x, i, k} \otimes s_{x, l}$, $s_{x, i, k} \otimes s_{x, j, l}$, where $s_{x, i, k}$ (resp. $s_{x, j, l}$), $i = 2, \ldots, N_k$, (resp. $j = 2, \ldots, N_l$) form the bases for $H^0(X, L^{\otimes k} \otimes \mathcal{J}_x)$ (resp. $H^0(X, L^{\otimes l} \otimes \mathcal{J}_x)$).
		By comparing the value at $x$, we see that the coefficient of $s_{x, k} \otimes s_{x, l}$ has to be equal to $1$. 
		Also, due to minimality of the norm, all the other components have to vanish.
		This establishes (\ref{eq_mult_dual}).
		\par 
		We also claim that there are $k_0 \in \nat$, $C > 0$, so that for any $k, l \geq k_0$, $x \in X$, we have
		\begin{equation}\label{eq_peak_sect_prod_str}
			\Big\| 
				s_{x, k} \cdot s_{x, l}
				-
				\Big( \frac{k \cdot l}{k + l} \Big)^{\frac{n}{2}} s_{x, k + l}
			\Big\|_{{\textrm{Hilb}}_{k + l}(h^L)}
			\leq
			C
			\Big( \frac{k \cdot l}{k + l} \Big)^{\frac{n}{2}} \cdot \Big(
				\frac{1}{k} 
				+
				\frac{1}{l}
			\Big).
		\end{equation}
		To see this, remark that $s_{x, k}(y) \cdot \sqrt{B_k(x)} = B_k(y, x) \cdot l^{\otimes k}$, where the dot stands for the pairing between $L_x^*$ and $L_x$.
		Taking this into account, (\ref{eq_peak_sect_prod_str}) follows directly from the estimates of Dai-Liu-Ma \cite{DaiLiuMa} and Ma-Marinescu \cite{MaMarOffDiag}, cf. (\ref{eq_dai_liu_ma}), using the bound (\ref{eq_int_gauss}).
		\par 
		From Proposition \ref{prop_stein_weiss}, we see that for $k_0 \in \nat$, described after (\ref{eq_mult_map}), for any $k, l \geq k_0$, we have
		\begin{multline}\label{eq_subadd_weight1}
			A([\mathcal{F}_k \otimes \mathcal{F}_l], [{\rm{Hilb}}_k(h^L) \otimes {\rm{Hilb}}_l(h^L)])
			\\
			\geq
			\Big(
			A(\mathcal{F}_k, {\rm{Hilb}}_k(h^L)) \otimes {\rm{Id}}
			+
			{\rm{Id}} \otimes A(\mathcal{F}_l, {\rm{Hilb}}_l(h^L))
			\Big)\Big|_{H^0(X, L^{\otimes (k + l)})}.
		\end{multline}
		We also remark that by the definition of submultiplicativity and Proposition \ref{thm_interpol}, we have
		\begin{equation}\label{eq_subadd_weight11}
			A(\mathcal{F}_{k + l}, [{\rm{Hilb}}_k(h^L) \otimes {\rm{Hilb}}_l(h^L)])
			\geq
			A([\mathcal{F}_k \otimes \mathcal{F}_l], [{\rm{Hilb}}_k(h^L) \otimes {\rm{Hilb}}_l(h^L)]).
		\end{equation}
		If we evaluate (\ref{eq_subadd_weight1}) at $s_{x, k} \cdot s_{x, l}$ and then use (\ref{eq_mult_dual}) along with (\ref{eq_subadd_weight11}), and the fact that both $s_{x, k}$ and $s_{x, l}$ are of unit norm, we get 
		\begin{multline}\label{eq_subadd_weight2}
			\Big\langle A(\mathcal{F}_{k + l}, [{\rm{Hilb}}_k(h^L) \otimes {\rm{Hilb}}_l(h^L)]) (s_{x, k} \cdot s_{x, l}), (s_{x, k} \cdot s_{x, l}) \Big\rangle_{[{\rm{Hilb}}_k(h^L) \otimes {\rm{Hilb}}_l(h^L)]}
			\\
			\geq
			\Big\langle
			A(\mathcal{F}_k, {\rm{Hilb}}_k(h^L)) s_{x, k}, s_{x, k}
			\Big\rangle_{{\rm{Hilb}}_k(h^L)}
			+
			\Big\langle
			A(\mathcal{F}_l, {\rm{Hilb}}_l(h^L)) s_{x, l}, s_{x, l}
			\Big\rangle_{{\rm{Hilb}}_l(h^L)}.
		\end{multline}
		\par 
		Remark also that in the notations of Theorem \ref{thm_cholesky}, for an arbitrary $C > 0$, we have $A(\mathcal{F}, H_0) = A(\mathcal{F}, C \cdot H_0)$.
		Also, by the asymptotic Riemann-Roch-Hirzebruch theorem, for any $\epsilon > 0$, there is $k_2 \in \nat$, such that for any $k, l \geq k_2$, $l/2 \leq k \leq 2l$, we have $2 ( 1 + \log N_{k + l} ) \big(\frac{1}{k} + \frac{1}{l}) \leq \epsilon$.
		In particular, by Theorem \ref{thm_as_isom}, the result of Theorem \ref{thm_cholesky} applies to $H_0 := [{\rm{Hilb}}_k(h^L) \otimes {\rm{Hilb}}_l(h^L)] \cdot (\frac{k \cdot l}{k + l})^{n / 2}$ and $H_1 := {\rm{Hilb}}_{k + l}(h^L)$ for  $k, l \geq k_2$, $l/2 \leq k \leq 2l$.
		As a conclusion, there are $C > 0$, $k_3 \in \nat$, such that for any $k, l \geq k_3$, we have
		\begin{equation}\label{eq_subadd_weight3}
			\Big\| A(\mathcal{F}_{k + l}, [{\rm{Hilb}}_k(h^L) \otimes {\rm{Hilb}}_l(h^L)]) 
			-
			A(\mathcal{F}_{k + l}, {\rm{Hilb}}_{k + l}(h^L))
			\Big\|
			\leq
			C \log(k + l),
		\end{equation}
		where $\| \cdot \|$ is for the operator norm either with respect to $[{\rm{Hilb}}_k(h^L) \otimes {\rm{Hilb}}_l(h^L)]$ or ${\rm{Hilb}}_{k + l}(h^L)$.
		\par 
		Directly from Theorem \ref{thm_as_isom}, (\ref{eq_subadd_weight2}) and (\ref{eq_subadd_weight3}), we deduce that there are $k_4 \in \nat$, $C > 0$, so that for any $k, l \geq k_4$, $l / 2 \leq k \leq 2 l$, $x \in X$, we have
		\begin{multline}\label{eq_subadd_weight4}
			\Big\langle A(\mathcal{F}_{k + l}, {\rm{Hilb}}_{k + l}(h^L)) (s_{x, k} \cdot s_{x, l}), (s_{x, k} \cdot s_{x, l}) \Big\rangle_{{\rm{Hilb}}_{k + l}(h^L)}
			\cdot
			\Big( \frac{k + l}{k \cdot l} \Big)^{n}
			\\
			\geq
			\Big\langle
			A(\mathcal{F}_k, {\rm{Hilb}}_k(h^L)) s_{x, k}, s_{x, k}
			\Big\rangle_{{\rm{Hilb}}_k(h^L)}
			\\
			+
			\Big\langle
			A(\mathcal{F}_l, {\rm{Hilb}}_l(h^L)) s_{x, l}, s_{x, l}
			\Big\rangle_{{\rm{Hilb}}_l(h^L)}
			-
			C \log(k + l).
		\end{multline}	 
		The statement (\ref{eq_subadd_weight}) now follows directly from (\ref{eq_peak_sect_prod_str}) and (\ref{eq_subadd_weight4}).
	\end{proof}

\bibliography{bibliography}

		\bibliographystyle{abbrv}

\Addresses

\end{document}